\definecolor{MyDarkblue}{rgb}{0,0.08,0.50}
\definecolor{Brickred}{rgb}{0.65,0.08,0}
\newtheorem*{theorem*}{Theorem}
\newtheorem{theorem}{Theorem}[section]
\newtheorem{lemma}[theorem]{Lemma}
\newtheorem{proposition}[theorem]{Proposition}
\theoremstyle{definition}
\newtheorem{definition}[theorem]{Definition}
\newtheorem{remark}[theorem]{Remark}
\newtheorem{assumption}[theorem]{Assumption}
\renewcommand{\P}{\mathbb{P}}
\newcommand{\Pv}{\mathbb{P}}
\newcommand{\eps}{\varepsilon}
\newcommand{\cF}{\mathcal{F}}
\newcommand{\cG}{\mathcal{G}}
\newcommand{\cZ}{\mathcal{Z}}
\newcommand{\Var}{{\rm Var}}
\newcommand{\e}{{\mathrm e}}
\newcommand{\R}{\mathbb{R}}
\newcommand{\N}{\mathbb{N}}
\newcommand{\Z}{\mathbb{Z}}
\renewcommand{\emptyset}{\varnothing}
\newcommand*{\wt}{\widetilde}
\newcommand*{\be}{\begin{equation}}
\newcommand*{\ee}{\end{equation}}
\newcommand*{\ba}{\begin{aligned}}
	\newcommand*{\ea}{\end{aligned}}
\newcommand*{\barr}{\begin{array}{c}}
	\newcommand*{\earr}{\end{array}}
\def \toinp    {\buildrel {\Pv}\over{\longrightarrow}}
\def \toindis  {\buildrel {d}\over{\longrightarrow}}
\def \toas     {\buildrel {a.s.}\over{\longrightarrow}}
\newcommand*{\ind}{\mathbbm{1}}
\def\namedlabel#1#2{\begingroup
	#2%
	\def\@currentlabel{#2}%
	\phantomsection\label{#1}\endgroup
}
\newcommand{\bes}{\begin{equation*}}
\newcommand{\ees}{\end{equation*}}
\renewcommand{\P}[1]{\mathbb{P}\!\left(#1\right)}
\newcommand{\E}[1]{\mathbb{E}\!\left[#1\right]}
\newcommand{\F}{\mathcal{F}}
\newcommand{\G}{\mathcal{G}}
\newcommand{\Zm}{\mathcal{Z}}
\renewcommand{\N}{\mathbb{N}}
\newcommand{\Fb}{\bar{\F}_n}
\newcommand{\Gamnk}{\Gamma_n^{(k)}}
\newcommand{\dzn}[1]{\Delta \Zm_n(#1)}
\newcommand{\Ef}[2]{\mathbb{E}_\F#1[#2#1]}
\newcommand{\Pf}[1]{\mathbb{P}_\F\!\left(#1\right)}
\newcommand{\I}{\mathbb{I}}
\renewcommand{\d}{\mathrm{d}}
\DeclareMathOperator*{\argmax}{arg\,max}
\newcommand{\zni}{\Zm_n(i)}
\newcommand{\inn}{i\in[n]}
\newcommand{\intk}{[2^{-(k+1)},2^{-k})}
\numberwithin{equation}{section}
\begin{document}
	\title{A phase transition for preferential attachment models with additive fitness}
	
	\date{\today}
	\keywords{Network models, preferential attachment model, additive fitness, scale-free property, maximum degree}
	\subjclass[2010]{Primary: 05C80 Secondary: 60G42} 
	\author[Lodewijks]{Bas Lodewijks}
	\author[Ortgiese]{Marcel Ortgiese}
	\address{Department of Mathematical Sciences,
		University of Bath,
		Claverton Down,
		Bath,
		BA2 7AY,
		United Kingdom.}
	\email{b.lodewijks@bath.ac.uk, ma2mo@bath.ac.uk}
	
	\maketitle 
	
	\begin{abstract}Preferential attachment models form a popular class of growing networks, where
incoming vertices are preferably connected to vertices with high degree. 
We consider a variant of this process, where vertices are equipped with 
a random initial fitness representing initial inhomogeneities among vertices and the fitness influences the attractiveness of a vertex in an additive way.
We consider a heavy-tailed fitness distribution and show that the model 
exhibits a phase transition depending on the tail exponent of the fitness distribution.
In the weak disorder regime, one of the old vertices has maximal degree irrespective of fitness, while for strong disorder the vertex with maximal degree has to satisfy the right balance between fitness and 
age.
Our methods use martingale methods to show concentration of degree evolutions as well as extreme value theory to control the fitness landscape.\end{abstract}
	
\section{Introduction}

A distinctive feature of real-world networks is their inhomogeneity, characterized in particular through the presence of hubs. These are nodes with a number of connections that greatly exceeds the average and thus have a great impact on the overall network topology. The existence of hubs in a network is closely linked to the \emph{scale-free property}, that is, the proportion of nodes in the network with degree (number of connections) $k$ scales as a power law $k^{-\tau}$ for some $\tau>1$.
		 
Preferential attachment models, as popularized by Barab\'asi and Albert~\cite{BarAlbRek99}, form
a class of random graphs that shows this behaviour `naturally',  that is, 
as a result of the dynamics and not because it is imposed otherwise,  see also~\cite{BolRioSpenTus01} for a first mathematical derivation of this fact. In these evolving random graph models new vertices are introduced to the network over time and they connect to earlier introduced vertices with a probability proportional to their degree. This leads to the so-called \emph{rich-get-richer} effect, which means that vertices with a high degree are more likely to increase their degree. It is exactly this effect that yields the power-law degree distributions and the existence of hubs in the graph.
	
The study of the emergence of hubs in random graph models such as the preferential model is often focused on the behaviour of the maximum degree in the graph. M\'ori first showed that for the Barab\'asi-Albert model the maximum degree is of the same order as the degree of the first vertex \cite{Mori05}, which was later generalised by Athreya \emph{et al}.\ to affine preferential attachment models (with random out-degree) and to a larger class of preferential attachment models by Bhamidi in \cite{Athr08} and \cite{Bham07}, respectively. A consequence of the   way in which preferential attachment graphs evolve,
is that the rich-get-richer effect should really  be interpreted as an \emph{old-get-richer} effect: it is the old vertices, who are introduced at the beginning of the evolution of the graph, that are able to attract the most connections \cite{Hof16}.

However, when compared to real-life networks, it is clearly desirable to have a model where younger vertices can compete with the old ones. One way to achieve this is by assigning to each vertex a 
random fitness representing its intrinsic attractiveness and then to let the connection probability of newly incoming vertex be proportional to either the product of the fitness and degree or the sum. These two models were introduced by Barab\'asi and Bianconi in \cite{BiaBar01} and Erg\"un and Rodgers in \cite{ErgRod02}, respectively.
	
Most previous results on preferential attachment models with fitness  deal with the multiplicative case for bounded fitness. One of the reasons is that under certain conditions on the fitness distribution, these models exhibit the phenomenon of condensation, where a positive proportion of incoming vertices connects to vertices with fitness closer and closer to the maximal fitness in the system. 
This phenomenon was first shown in the mathematical literature in~\cite{BorChaDasRoch07}, later extended in~\cite{DerOrt14} for a wide range of models, by looking at the empirical fitness and degree distribution. A full dynamic description of the condensation is a challenging problem, however see~\cite{Der16} for a very detailed analysis in a slightly modified model. \cite{DerMaiMor17} considers a continuous-time embedding of the process into a reinforced branching process, which allows them to control the maximal degree (in the continuous-time setting), which in the non-condensation case can be translated back to the random graph model. Also, under certain assumption on the fitness distribution, they show that condensation is non-extensive in the sense that there is not a single vertex that acquires a positive fraction of the incoming edges. These results are extended by \cite{Mailler2019} to a larger class of (bounded) fitness distributions (as a special case of a more general set-up).

Here, we consider the model with additive fitness, where a vertex is chosen with probability 
proportional to the sum of its degree and its intrinsic fitness. To best of our knowledge the only mathematical result have been~\cite{Bham07} and~\cite{Sen19}, who confirmed the non-rigorous results in~\cite{ErgRod02}. \cite{Bham07} showed that when the fitness is bounded, the degree distribution follows a power law with the same exponent as for the model with an additive constant equal to the expected value of the fitness. 
Moreover, \cite{Bham07} gives the asymptotics for the maximum degree and shows that it agrees again with the asymptotics for the model with additive constant. \cite{Sen19} considers the case of a deterministic additive sequence and shows that there is an equivalence between the preferential attachment (tree) model and a weighted recursive tree. From this, the author deduces $\ell^p$-convergence of the renormalized degree sequence under a growth condition on the additive sequence. Furthermore, he considers geometric properties of the weighted recursive trees.
Somewhat related  is a model of preferential attachment with random (possibly heavy-tailed) initial degree, for which~\cite{DeijEskHofHoog09} show convergence of empirical fitness distributions, but the structure of these networks is very different from the additive fitness case due to large out-degrees.

In our work we consider the case of unbounded fitness and show that when the fitness distribution follows a power law,  a more complex  phase diagram arises. Our first result shows convergence for the empirical degree and fitness distribution. From this we can in particular deduce that if the fitness distribution is sufficiently light-tailed, then we are in a \emph{weak disorder regime}, where the same result as in~\cite{Bham07} still holds for both tail exponent of the degree distribution and 
the asymptotics of the maximum. However, if the tail exponent of the fitness distribution is sufficiently small (but so that the fitness still has a finite first moment), then we are in a \emph{strong disorder regime}, where the tail exponent of the degree distribution is the same as for the fitness distribution. Moreover, the maximal degree grows of the same order as the largest fitness in the system. However, the
vertex that maximizes the degree has to satisfy a delicate balance between arriving early and having a large fitness. In the limit this competition is expressed as an optimization of a functional of a Poisson point process. 

Finally, we can also consider the \emph{extreme disorder regime} when the fitness does not have a finite first moment. In that case, we show that a uniformly selected vertex does not connect to any incoming vertices with high probability. Moreover, the maximal degree now scales as order $n$ and the maximising vertex again satisfies the right balance between arriving early and large fitness. We note that our results for the degree distribution improve on those by Erg\"un and Rodgers  \cite{ErgRod02}, where these different regimes are overlooked and only the weak disorder regime is covered.

Our proof for the empirical degree/fitness distributions uses a stochastic approximation argument, which was also used in \cite{DerOrt14} for the multiplicative case. The analysis of the maximal degree is split into two steps: First we show concentration of the degrees when compared to the expected degree (conditional on the fitness values) adapting the martingale arguments of M\'ori~\cite{Mori05} (see also~\cite{Hof16} for an exposition with more general attachment rules). For the weak disorder case, similar arguments as in~\cite{Hof16} are sufficient to control the maximal degree. However, in the strong
and extreme disorder case, we have to control the conditional expectation of the degrees, which are a function of the fitness only. We then show that these functionals simplify and converge to a functional of a Poisson point process, so that with the concentration we can deduce convergence of the maximal degree. Finally, our analysis is robust and covers essentially three variants of preferential attachment models: a model with possibly random out-degree as in~\cite{DerMor09} (and at most one edge between vertices) and two variations where the out-degree of each new vertex is fixed and then the connection probabilities are either updated after each edge is drawn or are kept fixed.
 		
{\bf Notation.} Throughout we will use the following notation. We let $\N = \{1,2,3,\ldots\}$ be the natural numbers, we write $\N_0 = \{0,1, 2,\ldots\}$ if we want to include $0$ and let $[n]:=\{1,\ldots,n\}$. Moreover, for any sequence $a_n$ and $b_n$ of positive real numbers, we say $a_n = \Theta(b_n)$ if there exists a constant $C> 0$ such that $a_n \leq C b_n$ and $b_n \leq C a_n$. 
Moreover, we say $a_n \sim b_n$ if $\lim_{n \rightarrow \infty} \frac{a_n}{b_n} = 1$. Also, we use the conditional probability measure $\mathbb{P}_\F(\cdot):=\P{\cdot\,|\,(\F_i)_{i\in\N}}$ and expectation $\mathbb{E}_\F[\cdot]:=\E{\cdot\,|\,(\F_i)_{i\in\N}}$.
	
\section{Definitions and main results}
	
The preferential attachment model is an evolving random graph model, where vertices are added to the graph consecutively and then connected to older vertices. We denote by $\G_n$ the resulting directed graph at the stage when the vertex set is $[n]$. Moreover, 
we take  edges to be directed from the vertex with high index to the one with lower index.
Throughout, we will use the following notation,
\be
\Zm_n(i):=\mbox{in-degree of vertex } i \mbox{ in }\G_n. 
\ee
We now introduce three different preferential attachment with fitness models (PAF), the first one which allows for a random out-degree in the spirit of Dereich and M\"orters~\cite{DerMor09}, the second one where the out-degree of a new vertex is fixed and we connect
edges while keeping the degrees fixed and the last one 
 with a fixed out-degree, but where we update degrees in between connections (where the later is the 
fitness modification of the a model  closer to~\cite{BolRioSpenTus01}).

\begin{definition}[Preferential attachment with fitness]\label{def:paf}
	Let $(\F_i)_{i\geq 1}$ be a sequence of i.i.d.\ copies of a random variable $\F$ taking values in $(0,\infty)$
	with distribution $\mu$. For any $n \in \N$, define
	\[ S_n := \sum_{i=1}^n \cF_i . \]
Let $n_0, m_0 \in \N$.
We say that a sequence of  random graphs $(\mathcal{G}_n)_{n \geq n_0}$
is a \emph{preferential attachment model with (additive) fitness} if 
$\mathcal{G}_n$ is a directed and weighted graph on the vertex set $[n]$ 
with edges directed from larger to smaller indices.
Moreover, we assume that $\G_{n_0}$ has $m_0$ edges and we assign
fitness  values $\F_1,\F_2,\ldots,\F_{n_0}$ to the vertices $1,2, \ldots, n_0$ respectively. 

To obtain $\cG_{n+1}$ from $\cG_n$ for some $n \geq n_0$, add vertex $n+1$ to the vertex set and 
attach fitness $\F_{n+1}$ to $n+1$.
Furthermore, we assume that the updating rules satisfies one of the following three assumptions for some fixed $m \in \N$: 

\begin{enumerate}[labelindent = 1cm, leftmargin = 2.2cm]
\item[{\bf (PAFRO)}]
 \emph{Preferential attachment with fitness and random out-degree}. Here $m =1$ and conditionally on $\mathcal{G}_n$,   vertex $n+1$ is connected to each vertex in $[n]$
by at most one edge and the probability to connect to a given $i\in[n]$ is
\begin{equation} \label{eq:PAFRO}
			\frac{\zni+\F_i}{m_0+(n-n_0)+S_n}.
			\end{equation}
			Furthermore, conditionally on $\G_n$ the degree increments $(\Delta\zni:=\Zm_{n+1}(i)-\zni, i \in [n])$ are pairwise non-positively correlated. \\
\item[{\bf (PAFFD)}] \emph{Preferential attachment with fitness and fixed degree}. To vertex $n+1$ we assign $m$ half-edges. Conditionally on $\G_n$, connect each half-edge independently to some $i\in[n]$ with probability
			\be 
			\frac{\zni+\F_i}{m_0+m(n-n_0)+S_n}.
			\ee  
\item[{\bf (PAFUD)}] \emph{Preferential attachment with fitness and updating degree}. To vertex $n+1$ we assign $m$ half-edges.  Let $\Zm_{n,j}(i)$ denote the in-degree of vertex $i$ when $n+1$ has attached $j$ of its half-edges, $j=1,\ldots,m$. For $j=1,\ldots,m$, conditionally on the graph of size $n$ including the first $j-1$ half-edges from $n+1$, connect the $j^{\mathrm{th}}$ half-edge to $i\in[n]$ with probability
			\be 
			\frac{\Zm_{n,j-1}(i)+\F_i}{m_0+m(n-n_0)+(j-1)+S_n}.
			\ee 
\end{enumerate}

\end{definition}

\begin{remark}
The quantity in~\eqref{eq:PAFRO} is always less than $1$, since $\sum_{i=1}^{n_0}\cZ_{n_0}(i) = m_0$ and at each step $\cZ_n(i)$ increases by at most  one. Note also that for the PAFRO  assumption, the exact distribution of $(\Delta \cZ_n(i), i \in [n])$ is not specified. For example, for $m = 1$, the PAFFD and the PAFUD model 
are identical and both satisfy PAFRO. Another possibility is to consider a model with a random out-degree, where $(\Delta \cZ_n(i), i \in [n])$ is a vector of independent Bernoulli variables with success probability as given in~\eqref{eq:PAFRO}.
\end{remark}

We have defined our random graph model for an arbitrary fitness distribution. 
However, for the analysis the most interesting case occurs when we are dealing with heavy-tailed distributions. 
In this case the fitness can have a significant effect on the behaviour of the system as a whole, whereas the `fitness effect' is smoothed out when its tail behaviour is too light. In the latter case, one sees no differences in the mean-field behaviour when changing from a deterministic, fixed fitness to random i.i.d.\ fitness values. Therefore, in the following, we will frequently consider the following assumption:

\begin{assumption}\label{Ass:powerlaw}
	The fitness distribution is a power law with exponent $\alpha>1$, i.e.
	\be 
	\P{\F\geq x}= \mu(x,\infty) = \ell(x)x^{-(\alpha-1)},\quad \mbox{for } x > 0, 
	\ee 
	where $\ell$ is a slowly-varying function at infinity, i.e.\  for all $ c>0 \ \lim_{x\rightarrow\infty}\ell(cx)/\ell(x)=1$.
\end{assumption} 

We continue by stating our first main result. We define the following measures,
\be\ba\label{eq:gammas}
\Gamma_n&:=\frac{1}{n}\sum_{i=1}^n \Zm_n(i)\delta_{\F_i},\qquad
\Gamnk:=\frac{1}{n}\sum_{i=1}^n \mathbbm{1}_{\{\Zm_n(i)=k\}}\delta_{\F_i},\qquad p_n(k):=\Gamnk([0,\infty)),
\ea\ee 
which correspond to the the empirical fitness distribution of a vertex sampled with weight given by its in-degree, then the joint empirical fitness-in-degree distribution and finally the empirical degree distribution. 

\begin{theorem}[Degree distributions in PAF models]\label{Thrm:degree}
	Consider the three PAF models as in Definition~\ref{def:paf} and suppose the fitness satisfies $\E{\F} < \infty$.  Let $\theta_m:=1+\E{\F}/m$. Then, almost surely, for any $k \in \N_0$, as $n \rightarrow \infty$,
\be\label{eq:gammaconv}
\Gamma_n \rightarrow \Gamma, \quad 	\Gamnk \rightarrow \Gamma^{(k)}, \quad \mbox{ and }\quad
p_n(k) \rightarrow p(k), 
\ee
where the first two statements hold with respect to the weak$^*$ topology and the limits 
are given as 
\begin{equation}
 \label{eq:gamma}
	\Gamma(\d x)  =\frac{x}{\theta_m-1}\mu(\d x), \qquad 
\Gamma^{(k)}(\d x)=\frac{\theta_m}{x+\theta_m}\prod_{\ell=1}^k\frac{(\ell-1)+x}{\ell+x+\theta_m}\mu(\d x),
\end{equation}
and
\begin{equation} \label{eq:pk}
	p(k) =\int_0^\infty \frac{\theta_m}{x+\theta_m}\prod_{\ell=1}^k\frac{(\ell-1)+x}{\ell+x+\theta_m}\mu(\d x).
\end{equation}
\end{theorem}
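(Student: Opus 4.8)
The plan is to derive a stochastic-approximation recursion for the random measures $\Gamma_n$ and $\Gamma_n^{(k)}$ tested against well-chosen functions, in the spirit of \cite{DerOrt14}, and to read off the limits from the associated fixed-point equations. First I would reduce the three assertions to the two weak-$*$ convergences: once $\Gamma_n\to\Gamma$ and $\Gamma_n^{(k)}\to\Gamma^{(k)}$ hold almost surely, the convergence $p_n(k)=\Gamma_n^{(k)}([0,\infty))\to p(k)$ follows by ruling out escape of mass, since $\Gamma_n^{(k)}([T,\infty))\le\tfrac1n\#\{i\le n:\F_i\ge T\}\to\mu([T,\infty))$ a.s., which is small for $T$ large. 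The standing inputs are the strong law $S_n/n\to\E{\F}$ a.s.\ and the fact that the edge count $E_n=\sum_{i\le n}\mathcal Z_n(i)$ satisfies $E_n/n\to m$ a.s.\ — this is deterministic under \textbf{(PAFFD)} and \textbf{(PAFUD)}, and a one-line martingale argument under \textbf{(PAFRO)}; together they give that every attachment denominator in Definition~\ref{def:paf} equals $(1+o(1))\,nm\theta_m$ a.s. Finally, the three variants differ only through terms of order $1/n$ (a concentrating out-degree; the $O(1)$ shifts $j-1$ in the \textbf{(PAFUD)} denominators; the $O(1/n^2)$ chance that two half-edges of a new vertex in \textbf{(PAFFD)} hit the same vertex), so they lead to the same limiting recursion and can be treated at once.

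The engine is the one-step conditional mean. Fix a bounded continuous function $g$ on $[0,\infty)$, taken of compact support when treating the $\Gamma_n^{(k)}$; write $\mathscr F_n=\sigma(\mathcal G_n,\F_1,\dots,\F_n)$, $(a+\cdot)g$ for $x\mapsto(a+x)g(x)$, and $\Gamma_n^{(-1)}:=0$. A direct computation from the attachment rule and the denominator asymptotics yields
\begin{align*}
\condE{(n+1)\langle\Gamma_{n+1},g\rangle-n\langle\Gamma_n,g\rangle}{\mathscr F_n}&=\tfrac1{\theta_m}\langle\Gamma_n,g\rangle+\tfrac1{\theta_m}\cdot\tfrac1n\sum_{i\le n}\F_i\,g(\F_i)+o(1),\\
\condE{(n+1)\langle\Gamma_{n+1}^{(k)},g\rangle-n\langle\Gamma_n^{(k)},g\rangle}{\mathscr F_n}&=\1{k=0}\langle\mu,g\rangle+\tfrac1{\theta_m}\big(\langle\Gamma_n^{(k-1)},(k-1+\cdot)g\rangle-\langle\Gamma_n^{(k)},(k+\cdot)g\rangle\big)+o(1),
\end{align*}
where each $o(1)$ is a null sequence $\varepsilon_n$ times an a.s.\ bounded quantity. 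Dividing by $n+1$ puts each line in the canonical form $y_{n+1}=y_n+\tfrac1{n+1}(h_n-y_n)+\Delta M_n+\mathrm{bias}_n$.

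I would then invoke the standard stochastic-approximation machinery. Since each step adds one vertex and $O(1)$ edges, and all measures in play have uniformly bounded total mass (by $\sup_n E_n/n<\infty$, resp.\ by $1$), the martingale remainders have increments of order $1/n$, so $\sum_n\Delta M_n$ converges a.s.; the bias terms vanish a.s.\ by the strong law; and the drift is a strict contraction towards the claimed limit. For $\Gamma_n$ the recursion closes once $\tfrac1n\sum_{i\le n}\F_ig(\F_i)$ is replaced by its a.s.\ limit $\E{\F g(\F)}$: the drift is then the scalar affine map with contraction rate $(\theta_m-1)/\theta_m>0$, so $\langle\Gamma_n,g\rangle\to\E{\F g(\F)}/(\theta_m-1)$; letting $g$ range over a countable determining family and using $\Gamma_n([0,\infty))=E_n/n\to m=\Gamma([0,\infty))$ gives $\Gamma_n\to\Gamma$ a.s. For the $\Gamma_n^{(k)}$ I would induct on $k$: at stage $k$, $\Gamma_n^{(k-1)}$ has already been shown to converge, and as $g$ (hence $(k-1+\cdot)g$) is compactly supported the cross term $\langle\Gamma_n^{(k-1)},(k-1+\cdot)g\rangle$ converges by the inductive hypothesis; the drift then depends on $\Gamma_n^{(k)}$ only through the bounded linear operator $\nu\mapsto(1+(k+\cdot)/\theta_m)\nu$, which is multiplication by a function $\ge1$, so the associated linear ODE contracts the error to $0$ at exponential rate $\ge1$ and the measure-valued stochastic approximation converges to its unique fixed point. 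The escape-of-mass bound then upgrades this vague convergence to the asserted weak-$*$ one and delivers $p_n(k)\to p(k)$.

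It remains to identify the limits, by passing to the limit in the fixed-point relations. Writing $\Gamma=\rho\mu$ one gets $(\theta_m-1)\rho(x)=x$; writing $\Gamma^{(k)}=\rho_k\mu$ one gets $(1+\tfrac{x}{\theta_m})\rho_0(x)=1$ and, for $k\ge1$, $(1+\tfrac{k+x}{\theta_m})\rho_k(x)=\tfrac1{\theta_m}(k-1+x)\rho_{k-1}(x)$, that is $\rho_0(x)=\tfrac{\theta_m}{\theta_m+x}$ and $\rho_k(x)=\tfrac{k-1+x}{\theta_m+k+x}\rho_{k-1}(x)$; iterating reproduces exactly the densities in \eqref{eq:gamma}, and $p(k)=\Gamma^{(k)}([0,\infty))=\int_0^\infty\rho_k\,\d\mu$ is \eqref{eq:pk}. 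I expect the main obstacle to be not this algebra but making the measure-valued stochastic approximation rigorous in the \emph{almost sure} sense with an unbounded fitness distribution: this is what forces the restriction to compactly supported test functions (keeping the drift operator bounded), the passage from in-probability to a.s.\ convergence via summability of the $O(1/n)$ martingale increments, and the careful separation of the true drift from the $o(1)$ denominator corrections using the strong law; verifying that all three model variants genuinely collapse to the same recursion is a further, more bookkeeping-heavy, point.
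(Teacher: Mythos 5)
Your skeleton matches the paper's strategy — a stochastic-approximation recursion for the empirical measures, induction on $k$, control of the martingale remainder, and identification of the limit through the same fixed-point algebra — and your treatment of $\Gamma_n$ itself (testing against bounded continuous $g$, using $\tfrac1n\sum_i\F_i g(\F_i)\to\mathbb{E}[\F g(\F)]$ and a scalar contraction of rate $(\theta_m-1)/\theta_m$) is fine and even a little slicker than the paper's. The genuine gap is in the step you describe as "the measure-valued stochastic approximation converges to its unique fixed point" for $\Gamma_n^{(k)}$. For a fixed test function $g\ge 0$ the out-flow term in your recursion is $\langle\Gamma_n^{(k)},(1+(k+\cdot)/\theta_m)g\rangle$, which is \emph{not} a scalar multiple of $\langle\Gamma_n^{(k)},g\rangle$: the coefficient $1+(k+x)/\theta_m$ varies over the support of $g$. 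The scalar stochastic-approximation lemma (the only "standard machinery" available here, \cite[Lemma 3.1]{DerOrt14}) then yields only the non-matching bounds $\limsup_n\langle\Gamma_n^{(k)},g\rangle\le C$ and $\liminf_n\langle\Gamma_n^{(k)},g\rangle\ge C/(1+(k+x^*)/\theta_m)$, where $x^*$ is the top of the support of $g$ and $C$ is the limiting in-flow; nothing forces these to coincide, so the limit is not identified. Your appeal to the limiting ODE being a pointwise multiplication operator with rate $\ge 1$ conflates stability of the deterministic ODE with an almost-sure convergence theorem for the measure-valued stochastic recursion; making that rigorous (e.g.\ via asymptotic pseudotrajectories in a space of measures, with the required compactness and continuity for an unbounded fitness) is a substantial piece of work you neither cite nor carry out. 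The paper closes exactly this hole by a localization device you do not have: it tests against indicators of short intervals $(f,f']$, uses $f\le\F_i\le f'$ on the relevant vertices to sandwich the out-flow coefficient between $1+(k+f)/\theta_m$ and $1+(k+f')/\theta_m$, so each interval mass obeys a genuinely scalar recursion whose upper/lower limits merge as the mesh shrinks, and then assembles weak$^*$ convergence over a countable dense family of continuity endpoints via Riemann approximation and the Portmanteau lemma.

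A secondary, fixable inaccuracy: your claim that "each step adds one vertex and $O(1)$ edges", so the remainder increments are $O(1/n)$, is false for the PAFRO model, where the out-degree of the incoming vertex is random and can be as large as $n$; likewise $E_n/n\to m$ is not a one-line martingale fact there. This is precisely why the PAFRO definition carries the non-positive-correlation assumption on the increments and why the paper's Lemma on the remainder handles PAFRO separately, bounding unconditional second moments of the martingale increments by $C\log n/n^2$ through a recursion for $\mathbb{E}[\Zm_n(i)+\F_i]$. For PAFFD/PAFUD your deterministic $O(1/n)$ bound on the increments does suffice (at most $m$ vertices change degree per step), but the PAFRO case needs the extra argument.
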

	
\begin{remark}
Throughout this article we work with Definition \ref{def:paf}. However, Theorem~\ref{Thrm:degree} also holds 
under the following slightly weaker conditions. 
Set
\be
\Fb:=\frac{1}{n}\sum_{i=1}^{n}(\Zm_n(i)+\F_i),
\ee 
and define the degree increment at step $n+1$ of vertex $i$ by $\Delta \Zm_n(i):=\Zm_{n+1}(i)-\Zm_n(i)$. We  assume the graph $\G_{n_0}$ is given deterministically such that $m_0 := \sum_{i \in [n_0]} \Zm_{n_0}(i) \geq 1$. Furthermore, we assume  for $n \geq n_0$,
\text{}
\begin{enumerate}[label=(A\arabic*)]
	\item \label{Ass:A1} $\E{\Delta \Zm_n(i)\;|\;\G_n}= (\Zm_n(i)+\F_i)/(n\Fb)\ind_{\{i\leq n\}}.$
	\item \label{Ass:A4} $\exists \  C_{\mathrm{var}}>0:\ \Var(\Delta \Zm_n(i)\;|\;\G_n)\leq C_{\mathrm{var}}\E{\Delta \Zm_n(i)\;|\;\G_n}$.
	\item \label{Ass:A3} $\sup_{i=1,\ldots,n} n\big|\P{\Delta \Zm_n(i)=1\; |\; \G_n}-\E{\Delta \Zm_n(i)\;|\;\G_n}\big| \toas 0.$
	\item \label{Ass:A5} Conditionally on $\G_n$, $\{\Delta \Zm_n(i)\}_{i\in[n]}$ is negatively quadrant dependent in the sense that for any $i\neq j$ and $k,l\in\Z^+$,
	\be \label{eq:AssA5}
	\P{\Delta \Zm_n(i)\leq k,\Delta \Zm_n(j)\leq l\;|\;\G_n}\leq \P{\dzn{i}\leq k\;|\;\G_n}\P{\dzn{j}\leq l\;|\;\G_n}.
	\ee
\end{enumerate}
As can be seen from the proof, Theorem \ref{Thrm:degree} holds for any evolving random graph model that satisfies these  assumptions. See also Lemma~\ref{lemma:nqd} below, where we show that the PAFFD and the PAFUD model satisfy the negative quadrant dependency as in~\ref{Ass:A5}.
\end{remark}

By comparing with the case where the fitness is constant, we can interpret Theorem~\ref{Thrm:degree} such that the degree of a typical vertex can be found via  a two-step process, where
first the fitness is chosen according to $\mu$ and then the degree evolves as in the case with an additive constant
equal to the fitness.

However, while at first our result looks similar to the constant fitness case, 
by looking at the tail exponent 
of the degree distribution we can see that this is only the case when the fitness is not too heavy-tailed.
Indeed, suppose that the fitness distribution follows a power law, then we can distinguish three different regimes.
As the next theorem shows, if the fitness distribution has finite  moments of order $\theta_m = 1 + \E{\cF}/m$, then the degree distribution has power law exponent $1+\theta_m$, which is the same as in the model with constant fitness equal to $\mathbb{E}[\F]$. Using the terminology used in the field of random media, we refer to this situation as the \emph{weak disorder regime}.
However, if the fitness distribution is more heavy-tailed, but still with finite first moment, then the degree distribution follows the same power law as the fitness distribution, a situation which we will refer to as the
\emph{strong order regime}. 
Finally, we can also consider the \emph{extreme disorder} case when the fitness distribution does not have a finite first moment. In this case we show that with high probability, a uniformly chosen vertex has not received any incoming edges (since most connections are made to vertices with very high fitness). 

\begin{theorem}\label{Thrm:pkasymp} Suppose $p(k), k \in \N_0$, is as in~\eqref{eq:pk} and $\theta_m = 1 + \E{\F}/m$.
\begin{itemize}
	\item[(i)] \emph{Weak disorder.}
	If $\mathbb{E}[\F^{\theta_m}] < \infty$, then for $k \rightarrow \infty$,
	\[ p(k) \sim C k^{- (1+\theta_m)}, \quad \mbox{where } C :=\theta_m\int_0^\infty \frac{\Gamma(x+\theta_m)}{\Gamma(x)}\mu(\d x),  \]
	and where $\Gamma$ is the Gamma function.
	\item[(ii)] \emph{Strong disorder.}
	Suppose $\cF$ has a power law distribution as in Assumption~\ref{Ass:powerlaw}. Then, if
	$\alpha = 1 + \theta_m$ and $\mathbb{E}[\F^{\theta_m}]=\infty$, we have as $k \rightarrow \infty$
	\[ p(k)=\Theta(\ell^\star(k)k^{-(1+\theta_m)}), \]
	where $\ell^\star(k):=\int_{1}^k \ell(x)/x \ \d x$
	and if $\alpha \in (2, 1+\theta_m)$, then as $k \rightarrow \infty$,
	\[ 	p(k)= \Theta(\ell(k) k^{-\alpha}). \]
	\item[(iii)]\emph{Extreme disorder.} Suppose $\cF$ has a power law distribution as in Assumption~\ref{Ass:powerlaw}
	with $\alpha \in (1,2)$ and consider the three PAF model as in Definition~\ref{def:paf}.
	Let $U_n$ be a uniformly chosen vertex in $\G_n$, let $\eps>0$ and let $E_n:=\{\Zm_n(U_n)=\Zm_{n_0}(U_n)\}$, be the event that $U_n$ has not increased its degree with respect to the initialisation $\G_{n_0}$. Then, for for $n$ sufficiently large,
	\be
	\P{E_n}\geq 1-C n^{-((2-\alpha)\wedge (\alpha-1))/\alpha+\eps},
	\ee 
	for some constant $C>0$. 
\end{itemize}
\end{theorem}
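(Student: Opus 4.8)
\emph{Parts (i) and (ii).} These are analytic facts about the explicit formula~\eqref{eq:pk}. The first step is to rewrite the product as a ratio of Gamma functions,
\[
\prod_{\ell=1}^k\frac{(\ell-1)+x}{\ell+x+\theta_m}=\frac{\Gamma(k+x)\,\Gamma(1+x+\theta_m)}{\Gamma(x)\,\Gamma(k+1+x+\theta_m)},
\]
so that the integrand in~\eqref{eq:pk} equals $q_k(x):=\theta_m\,\frac{\Gamma(x+\theta_m)}{\Gamma(x)}\cdot\frac{\Gamma(k+x)}{\Gamma(k+1+x+\theta_m)}$. For fixed $x$, Stirling gives $\Gamma(k+x)/\Gamma(k+1+x+\theta_m)\sim k^{-(1+\theta_m)}$, hence $k^{1+\theta_m}q_k(x)\to\theta_m\Gamma(x+\theta_m)/\Gamma(x)$; combining this with the uniform Gamma-ratio bound $\Gamma(k+x)/\Gamma(k+1+x+\theta_m)=\Theta\big((k+x)^{-(1+\theta_m)}\big)$ (implied constants depending only on $\theta_m$), so that $k^{1+\theta_m}q_k(x)=O(1+x^{\theta_m})$ with a $k$-independent constant, and noting $\int(1+x^{\theta_m})\,\mu(\d x)<\infty\iff\E{\F^{\theta_m}}<\infty$, dominated convergence yields (i). For (ii), where $\E{\F^{\theta_m}}=\infty$, I would split $p(k)=\int_{(0,k]}q_k(x)\,\mu(\d x)+\int_{(k,\infty)}q_k(x)\,\mu(\d x)$: on $(0,k]$ one has $q_k(x)=\Theta\big(x^{\theta_m}k^{-(1+\theta_m)}\big)$ (up to a bounded correction near $0$, which is irrelevant since the mass sits near $x=k$ when $\theta_m>\alpha-1$), while on $(k,\infty)$ the product telescopes to order $1$ and $q_k(x)=\Theta(1/x)$. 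Under Assumption~\ref{Ass:powerlaw}, Karamata's theorem gives $\int_{(0,k]}x^{\theta_m}\,\mu(\d x)=\Theta(\ell(k)k^{\theta_m-(\alpha-1)})$ when $\theta_m>\alpha-1$ and $\int_{(k,\infty)}x^{-1}\,\mu(\d x)=\Theta(\ell(k)k^{-\alpha})$, which combine to $p(k)=\Theta(\ell(k)k^{-\alpha})$ for $\alpha\in(2,1+\theta_m)$; in the boundary case $\alpha=1+\theta_m$ the first integral is instead $\Theta(\ell^\star(k))$ by de Haan's theorem and dominates the second, giving $p(k)=\Theta(\ell^\star(k)k^{-(1+\theta_m)})$. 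The only subtlety is to make these $\Theta$'s uniform in $x$, which rests on two-sided Gamma-ratio estimates.

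\emph{Part (iii): reduction and a first-passage estimate.} Since $\E{\F}=\infty$, Theorem~\ref{Thrm:degree} is unavailable, so I argue directly from the dynamics. Write $\P{E_n^c}=\frac1n\sum_{i=1}^n\P{\Zm_n(i)>\Zm_{n_0}(i)}$. The indices $i\le i_0:=\lceil K\log n\rceil$ (including the $n_0$ initial vertices) contribute at most $i_0/n=O((\log n)/n)$, which is negligible since the claimed exponent is $<1$. For $i>i_0$ one has $\Zm_i(i)=0$, so $\{\Zm_n(i)>\Zm_{n_0}(i)\}=\{\Zm_n(i)\ge1\}$. Conditioning on the fitness sequence and on the first step $j+1$ at which $i$ gains an incoming edge --- at which point $\Zm_j(i)=0$ still --- in each of the three PAF models the conditional probability of that event is at most $m\F_i/(m_0+m(j-n_0)+S_j)\le m\F_i/S_j$, whence
\[
\Pf{\Zm_n(i)\ge1}\;\le\;m\,\F_i\sum_{j=i}^{n-1}\frac1{S_j}.
\]
Only $\F_i$, not $\Zm_j(i)$, appears on the right, so no recursion for the conditional expected degree is needed, and the estimate is identical for PAFRO ($m=1$), PAFFD and PAFUD.

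\emph{Part (iii): a good event for $S_j$, truncation, and optimisation.} Since $S_j\ge\max_{\ell\le j}\F_\ell$, for any $a>0$ one has $\P{S_j<a}\le(1-\P{\F\ge a})^j\le e^{-j\P{\F\ge a}}$. For $j\ge i_0=\lceil K\log n\rceil$ choose $a_j$ with $j\P{\F\ge a_j}=K\log n$; inverting the regularly-varying tail in Assumption~\ref{Ass:powerlaw} gives $a_j=\Theta\big((j/\log n)^{1/(\alpha-1)}\big)$ up to a slowly-varying factor, and the event $\cA_n:=\{S_j\ge a_j\ \text{for all}\ i_0\le j\le n\}$ satisfies $\P{\cA_n^c}\le n\,e^{-K\log n}=n^{1-K}$, negligible for $K$ large. (No such bound is possible for $j<i_0$, since Assumption~\ref{Ass:powerlaw} controls only the upper tail while $\F$ may concentrate near $0$; this is exactly why the indices $i\le i_0$ were discarded.) Now fix a truncation level $b=b_n\to\infty$. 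For $i_0<i\le n$,
\[
\P{\Zm_n(i)\ge1}\;\le\;\P{\cA_n^c}+\P{\F>b}+\E{\Pf{\Zm_n(i)\ge1}\,\ind_{\cA_n}\,\1{\F_i\le b}},
\]
and on $\cA_n$ the first-passage estimate gives $\sum_{j\ge i}S_j^{-1}\le\sum_{j\ge i}a_j^{-1}$, which (as $1/(\alpha-1)>1$) is $\Theta\big(i^{-(2-\alpha)/(\alpha-1)}\big)$ up to slowly-varying factors; so the last term is at most $m\,\E{\F\1{\F\le b}}\sum_{j\ge i}a_j^{-1}$, with $\E{\F\1{\F\le b}}=\Theta(b^{2-\alpha}\ell(b))$ by Karamata and $\P{\F>b}=\ell(b)b^{-(\alpha-1)}$. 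Averaging over $i_0<i\le n$ turns $\sum_{j\ge i}a_j^{-1}$ into $\Theta\big(n^{-(1\wedge(2-\alpha)/(\alpha-1))}\big)$ up to polylogarithms (the two cases being $\alpha<\tfrac32$ and $\alpha>\tfrac32$), so that
\[
\P{E_n^c}\;\lesssim\;n^{1-K}+\tfrac{\log n}{n}+\ell(b)\,b^{-(\alpha-1)}+b^{2-\alpha}\ell(b)\,n^{-(1\wedge(2-\alpha)/(\alpha-1))+o(1)}.
\]
Choosing $b=b_n$ a suitable power of $n$ to balance the last two terms and absorbing slowly-varying and polylogarithmic factors into $\varepsilon$ gives a bound of the form $\P{E_n^c}\lesssim n^{-((2-\alpha)\wedge(\alpha-1))/\alpha+\varepsilon}$ (in fact the cleaner exponent $(2-\alpha)\wedge(\alpha-1)$ comes out, which of course implies the stated estimate).

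\emph{Main obstacle.} The delicate point in (iii) is the uniform-in-$j$ lower bound on $S_j$: for $j$ of order $\log n$ the sum has only a handful of summands and, since the fitness law is unconstrained near $0$, no lower bound can hold there --- this is what forces the truncation of the index range and the logarithmic corrections, and it means one must check carefully that the $o(1)$ exponents and the slowly-varying factors arising from the extreme-value inversion and from the two Karamata applications really can be absorbed into $n^\varepsilon$. In (i)--(ii) the analogous (milder) obstacle is obtaining the Gamma-ratio asymptotics with error terms uniform in both $k$ and $x$. Throughout, the argument for (iii) uses only the elementary first-passage estimate above, so it is robust and applies simultaneously to all three models of Definition~\ref{def:paf}.
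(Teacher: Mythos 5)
Parts (i) and (ii) of your proposal follow essentially the same route as the paper: rewrite the product in~\eqref{eq:pk} as a ratio of Gamma functions, use a uniform two-sided Gamma-ratio bound plus dominated convergence for (i) (the paper cites the bound $k^{1+\theta_m}\Gamma(k+x)/\Gamma(k+x+1+\theta_m)\le 1$ from its reference \cite{Jam13}), and for (ii) split the integral at $x=k$, apply Karamata on $(0,k]$ and the $\Theta(1/x)$ bound on $(k,\infty)$, with de Haan's theorem (\cite[Prop.~1.5.9a]{BinGolTeu87}) in the boundary case $\alpha=1+\theta_m$; your outline matches this, including the separate treatment of $x$ near $0$.

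For part (iii) you take a genuinely different, and in fact sharper, route. The paper bounds $\P{E_n^c\cap\{\F_{U_n}\le n^\beta\}}$ by a Markov bound on the increments $\Delta\Zm_j(U_n)$ \emph{without} restricting to the first jump, so the degree $\Zm_j(k)$ enters the numerator and has to be controlled via $\sum_k\E{\Zm_j(k)}=m_0+m(j-n_0)$ (with a separate logarithmic correction for PAFRO), the normalisation is handled through $\E{(m_0+M_j)^{-1}}\lesssim j^{-1/(\alpha-1)+\eps}$, and the crude pointwise bound $\F_{U_n}\le n^\beta$ is what produces, after optimising $\beta$, the factor $1/\alpha$ in the exponent. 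You instead decompose over the \emph{first} incoming edge of vertex $i$, so only $\F_i$ (never $\Zm_j(i)$) appears and the estimate $\Pf{\Zm_n(i)\ge1}\le m\F_i\sum_{j\ge i}S_j^{-1}$ is uniform over the three models with no PAFRO correction; you then lower-bound $S_j$ uniformly on a high-probability event and use the Karamata truncated mean $\E{\F\1{\F\le b}}=\Theta(b^{2-\alpha}\ell(b))$ rather than $b$ itself. This buys you the exponent $(2-\alpha)\wedge(\alpha-1)$ up to $\eps$, which is strictly better than the stated $((2-\alpha)\wedge(\alpha-1))/\alpha$ and hence implies the theorem. One point does need patching: your claim $a_j=\Theta\big((j/\log n)^{1/(\alpha-1)}\big)$ (up to slowly varying factors) is only valid once $j/\log n\to\infty$; for $j$ within a constant factor of $i_0=\lceil K\log n\rceil$ the level $K\log n/j$ is of order one, $a_j$ is then a fixed quantile of $\F$, and since the law of $\F$ is unconstrained near $0$ this quantile can be arbitrarily small (indeed $a_{i_0}$ may be $0$), so $a_j^{-1}$ is not controlled by the regular-variation inversion there. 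The fix is routine --- discard indices $i\le (1+c)i_0$ (still an $O((\log n)/n)$ contribution), note that $a_j\ge q_{1/(1+c)}>0$ is a constant for $j\ge(1+c)i_0$, and handle the remaining polylogarithmic range of $j$ with this constant bound before the Potter-type asymptotics take over --- and all resulting polylogarithmic and slowly varying factors are absorbed into $n^{\eps}$, as you anticipated; but as written the $\Theta$-asymptotics for $\sum_{j\ge i}a_j^{-1}$ near $i\asymp\log n$ are not justified.
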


Our next main result provides a more detailed analysis of the dynamic behaviour of the system by describing the asymptotics of the maximal degree. 
As might be expected from the different phases observed for the tail of the degree distribution, there are also three distinct phases for the maximal degree. 
Again under the assumption that the fitness has a power law, 
we observe that in the \emph{weak disorder regime}, where the fitness has relatively light tails that the vertex with maximal degree is one of the old vertices, similar to the system with constant fitness. This first result (parts (i) and (iii) in the theorem below) in the special case of the PAFUD/PAFFD model with $m =1$ is also contained in~\cite{Sen19}.

However, if the fitness is more heavy-tailed (but still with finite first moment), i.e.\  in the \emph{strong order regime},  the maximal degree 
grows at the same rate as the maximal fitness in the system (i.e.\ approximately like $n^{1/(\alpha-1)}$). In this case,  the maximal degree satisfies a delicate balance between arriving early enough and having large fitness. 
Finally, in the \emph{extreme disorder regime}, where the fitness does not have a first moment,  the maximal degree grows of order $n$, again satisfying a non-trivial optimisation between large fitness value and arriving early. The main difference compared to the strong disorder regime is that now the sum of the fitness values in the normalization, e.g.\ in~\eqref{eq:PAFRO}, is random to first order
and depends on the extreme values  of the fitness landscape. 
As is common in extreme value theorem, the limiting variables in the strong and extreme disorder regime are described in terms of a functional of a Poisson point process capturing the extremes of the fitness 
(in competition with the advantage of arriving early).

\begin{theorem}[(Maximum) degree behaviour in PAFs]\label{Thrm:maxdegree}
	Consider the three PAF models as in Definition \ref{def:paf}. First, the following results hold for fixed degrees: 
	\begin{itemize}
		\item[(i)] Suppose $\mathbb{E}[\F^{1+\eps}] < \infty$ for some $\eps> 0$, then  for all fixed $i\in\N$,
	\be\label{eq:maxconv1}
	\Zm_n(i)n^{-1/\theta_m}\toas \xi_i,
	\ee 
	where $\xi_i$ is an almost surely finite random variable with no atom at $0$ and $\theta_m:=1+\E{\F}/m$. 
	\item[(ii)] When the fitness distribution satisfies Assumption \ref{Ass:powerlaw} with $\alpha\in (1,2)$, for all fixed $i\in\N$,
	\be \label{eq:maxconv2}
	\Zm_n(i)\toas \Zm_\infty(i),
	\ee 
	for some almost surely finite random variable $\Zm_\infty(i)$.
	\end{itemize}
In the following let $I_n:=\argmax_{i\in[n]}\Zm_n(i)$ (resolving any ties by taking the smaller index). 
	\begin{itemize} 
		\item[(iii)] \emph{Weak disorder:} If $\mathbb{E}[\F^{\theta_m + \eps}] < \infty$ for some $\eps > 0$, then we have
	\be\label{eq:indexmaxconv}
	I_n\toas I,\qquad \max_{i\in[n]}\zni n^{-1/\theta_m}\toas \sup_{i\geq 1}\xi_i,
	\ee 
	for some almost surely finite random variable $I$.
	\end{itemize}
Additionally, assume that the fitness distribution is a power law with parameter $\alpha$ as in Assumption \ref{Ass:powerlaw} and define $u_n:=\inf\{t\in\R: \P{\F\geq t} \geq 1/n\}$. Let $\Pi$ be a Poisson point process on $(0,1)\times (0,\infty)$ with intensity measure $\nu(\d t,\d x):=\d t \times (\alpha-1)x^{-\alpha}\d x$. Then, the following results hold:
\begin{itemize}
	\item[(iv)] \emph{Strong disorder:} When $\alpha\in(2,1+\theta_m)$, 
	\be\label{eq:ppplimit}
	(I_n/n,\max_{i\in[n]}\zni/u_n) \toindis (I,\sup_{(t,f)\in\Pi} f(t^{-1/\theta_m}-1)),
	\ee
	where $I$ has law
	\be \label{eq:lawI}
	\P{I\leq t}=\frac{\int_0^t(x^{-1/\theta_m}-1)^{\alpha-1}\d x}{\int_0^1(x^{-1/\theta_m}-1)^{\alpha-1}\d x},\qquad t \in[0,1],
	\ee 
	and $\max_{(t,f)\in\Pi}f(t^{-1/\theta_m}-1)$ has a Fr\'echet distribution with shape parameter $\int_0^1(x^{-1/\theta_m}-1)^{\alpha-1}\d x$.
	\item[(v)] \emph{Extreme disorder:} When $\alpha\in(1,2)$,
	\be\ba \label{eq:infmeanppplimit}
	(I_n/n,\max_{i\in[n]}\zni/n)\toindis\bigg(I, m\sup_{(t,f)\in\Pi}  f \int_t^1\bigg(\int_E g\ind_{\{u\leq s\}}\d \Pi(u,g)\bigg)^{-1}\d s\bigg),
	\ea \ee 
	for some random variable $I$ with values in $(0,1)$.
	\end{itemize}
\end{theorem}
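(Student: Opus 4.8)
The plan is to analyse all three models simultaneously via one martingale, and then to feed the resulting degree asymptotics---which conditionally on the fitness sequence are deterministic---into an extreme-value analysis of the fitness landscape. Conditionally on the fitness sequence, set $c_k:=m_0+m(k-n_0)+S_k$ and $\psi_n:=\prod_{k=n_0}^{n-1}(1+m/c_k)^{-1}$. From Definition~\ref{def:paf} one checks, for all three models, that $\mathbb{E}_\F[\Zm_{n+1}(i)+\F_i\mid\G_n]=(\Zm_n(i)+\F_i)(1+m/c_n)$ (for PAFUD using the telescoping identity $\prod_{j=1}^m(1+1/(c_k+j-1))=(c_k+m)/c_k$), so that $M^{(i)}_n:=\psi_n(\Zm_n(i)+\F_i)$ is a non-negative $(\mathbb{P}_\F,\G_n)$-martingale for $n\geq i\vee n_0$. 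Since each degree increment is a sum of at most $m$ Bernoulli variables, $\sum_k\Var_\F(M^{(i)}_{k+1}-M^{(i)}_k\mid\G_k)<\infty$, so $M^{(i)}_n\to M^{(i)}_\infty$ almost surely and in $L^2$; an argument of P\'olya-urn type (cf.~\cite{Mori05,Hof16}) then shows $M^{(i)}_\infty>0$ almost surely. It remains to control $\psi_n$. If $\E{\F}<\infty$ then $S_k/k\to\E{\F}$ almost surely, so $c_k\sim m\theta_m k$ and $\log\psi_n^{-1}=\tfrac1{\theta_m}\log n+\sum_k(m/c_k-1/(\theta_m k))+o(1)$; under $\E{\F^{1+\eps}}<\infty$ the Marcinkiewicz--Zygmund rate in the strong law makes the remaining series converge, so $\psi_n^{-1}n^{-1/\theta_m}\to A$ almost surely with $A\in(0,\infty)$ random, which gives~\eqref{eq:maxconv1} with $\xi_i:=A\,M^{(i)}_\infty$. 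If instead $\E{\F}=\infty$ with $\alpha\in(1,2)$, a Borel--Cantelli estimate on $\max_{k'\leq k}\F_{k'}$ gives $c_k\geq k^{1/(\alpha-1)-\delta}$ eventually, so $\sum_k m/c_k<\infty$, $\psi_n^{-1}$ converges to a finite limit and $\Zm_n(i)+\F_i$ converges; being integer valued, $\Zm_n(i)$ is eventually constant, which is~\eqref{eq:maxconv2}.

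Next I would establish a concentration bound uniform over vertices. Conditionally on $\F$, $\bar{\Zm}_n(i):=\mathbb{E}_\F[\Zm_n(i)]=\F_i(\psi_i/\psi_n-1)$ for $i>n_0$, and a Freedman/Azuma martingale concentration estimate for $M^{(i)}_\cdot$---using the variance bound above---together with a union bound over $i\in[n]$ yields $\max_{i\in[n]}|\Zm_n(i)-\bar{\Zm}_n(i)|=\op(r_n)$ (almost surely in the weak regime, after summing the exponential bound over $n$), with $r_n=n^{1/\theta_m}$ in the weak, $r_n=u_n$ in the strong and $r_n=n$ in the extreme regime. Hence in each regime it suffices to analyse the conditionally deterministic quantities $\bar{\Zm}_n(i)=\F_i(\psi_i/\psi_n-1)$ and their argmax over $[n]$, and then transfer the conclusions to $\Zm_n$ and $I_n$.

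For the \emph{weak disorder} regime, $\psi_i/\psi_n-1\sim A\psi_i n^{1/\theta_m}$, so $\bar{\Zm}_n(i)n^{-1/\theta_m}\to A\F_i\psi_i=\mathbb{E}_\F[\xi_i]$; when $\E{\F^{\theta_m+\eps}}<\infty$, a Borel--Cantelli argument gives $\F_i\psi_i\to0$ almost surely, so $\sup_{i\geq1}\xi_i$ (and the supremum of its conditional mean) is almost surely attained at a finite index, and with the concentration bound this gives~\eqref{eq:indexmaxconv}. In the \emph{strong disorder} regime ($\alpha\in(2,1+\theta_m)$, hence still $\E{\F}<\infty$), $\psi_i/\psi_n\sim(n/i)^{1/\theta_m}$ uniformly for $i$ of order $n$, but $\F_i\psi_i$ no longer vanishes, so the maximiser sits at an index $i=\Theta(n)$ and, writing $t=i/n$, $\bar{\Zm}_n(i)\approx\F_i(t^{-1/\theta_m}-1)$. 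The classical convergence of the point process $u_n^{-1}\sum_{i\leq n}\delta_{(i/n,\F_i)}$ to $\Pi$, together with a continuous-mapping argument for $(t,f)\mapsto f(t^{-1/\theta_m}-1)$, gives $(I_n/n,\max_{i\in[n]}\bar{\Zm}_n(i)/u_n)\toindis(I,\sup_{(t,f)\in\Pi}f(t^{-1/\theta_m}-1))$, which the concentration bound transfers to $\Zm_n$, proving~\eqref{eq:ppplimit}. The limiting laws then follow from Poisson computations: $\nu\{(t,f):f(t^{-1/\theta_m}-1)>x\}=x^{-(\alpha-1)}\int_0^1(t^{-1/\theta_m}-1)^{\alpha-1}\d t$, so the limiting maximum has the distribution function $x\mapsto\exp(-x^{-(\alpha-1)}\int_0^1(t^{-1/\theta_m}-1)^{\alpha-1}\d t)$, and the $t$-coordinate of the maximising point obeys~\eqref{eq:lawI} because this amounts to comparing two independent Fr\'echet variables with scales $\int_0^t(x^{-1/\theta_m}-1)^{\alpha-1}\d x$ and $\int_t^1(x^{-1/\theta_m}-1)^{\alpha-1}\d x$.

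Finally, in the \emph{extreme disorder} regime ($\alpha\in(1,2)$) the normalisation $c_k$ is itself random of order $u_n\gg n$, and $\psi_i/\psi_n-1=\sum_{k=i}^{n-1}m/c_k+o(1)\approx m\int_{i/n}^1 n\,\d s/S_{\fl{sn}}$; since $\alpha<2$, $T(s):=\int_E g\,\mathbbm{1}_{\{u\leq s\}}\d\Pi(u,g)$ is almost surely finite and one has the joint convergence $(u_n^{-1}\sum_{i\leq n}\delta_{(i/n,\F_i)},(u_n^{-1}S_{\fl{sn}})_{s\in[0,1]})\Rightarrow(\Pi,(T(s))_{s\in[0,1]})$, so $\bar{\Zm}_n(i)/n\approx m(\F_i/u_n)\int_{i/n}^1T(s)^{-1}\d s$; a continuous-mapping argument for $(t,f,\cdot)\mapsto mf\int_t^1T(s)^{-1}\d s$ and the concentration bound give~\eqref{eq:infmeanppplimit}, the maximiser lying in $(0,1)$ since $f\int_t^1T(s)^{-1}\d s\leq 1-t$ (because $T(s)\geq f$ for $s\geq t$) while it does not degenerate as $t\to0$. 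The main obstacle throughout these last two regimes is making the continuous-mapping steps rigorous: the functionals $(t,f)\mapsto f(t^{-1/\theta_m}-1)$ and $(t,f,\cdot)\mapsto f\int_t^1T(s)^{-1}\d s$ are unbounded near $t=0$, so one cannot directly invoke weak convergence of the point process. The resolution is a truncation argument showing that the points with $i/n<\delta$ contribute only $\op(1)$ to the supremum as $\delta\to0$---and this is exactly where the tail thresholds enter, since $\int_0^\delta(t^{-1/\theta_m}-1)^{\alpha-1}\d t<\infty$ precisely when $\alpha<1+\theta_m$, and the analogous finiteness in the extreme regime requires $\alpha<2$---after which, on $i/n\geq\delta$, the functionals are bounded and continuous and the point-process convergence applies. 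A secondary technical point, underlying every use of the concentration bound, is to check that $\max_{i\in[n]}|\Zm_n(i)-\bar{\Zm}_n(i)|$ is genuinely of smaller order than $u_n$ (respectively $n$), which holds because $\bar{\Zm}_n(i)=O(u_n)$ (respectively $O(n)$) uniformly while the martingale fluctuations are of order $\sqrt{\bar{\Zm}_n(i)\log n}$.
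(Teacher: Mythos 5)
Your set-up (the conditional martingale $M^{(i)}_n=\psi_n(\Zm_n(i)+\F_i)$, the asymptotics of $\psi_n$, and the point-process analysis of $\mathbb{E}_\F[\Zm_n(i)]=\F_i(\psi_i/\psi_n-1)$, including the truncation near $t=0$) matches the paper's strategy for (i), (ii), (iv) and (v). But the step you lean on everywhere --- a uniform concentration bound $\max_{i\in[n]}|\Zm_n(i)-\mathbb{E}_\F[\Zm_n(i)]|=\op(r_n)$ obtained from Azuma/Freedman or Chebyshev plus a union bound, justified by ``martingale fluctuations are of order $\sqrt{\bar{\Zm}_n(i)\log n}$'' --- has a genuine gap, and in the weak-disorder regime it is simply false. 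For a \emph{fixed} vertex $i$ the conditional fluctuation of $\Zm_n(i)$ is of the same order as its conditional mean (the martingale limit $\xi^1_i$ is non-degenerate under $\mathbb{P}_\F$), i.e.\ of order $n^{1/\theta_m}$, not $\sqrt{\bar{\Zm}_n(i)\log n}$; so concentration at scale $n^{1/\theta_m}$ fails, and even if it held it would identify the limit in (iii) as the supremum of the conditional means $A\F_i\psi_i$, not $\sup_i\xi_i$, nor would it give almost sure convergence of $I_n$. What (iii) actually requires is uniform-in-$n$ control of $\sup_{n\geq i}\psi_n\Zm_n(i)$ as $i\to\infty$, and this is exactly where $\mathbb{E}[\F^{\theta_m+\eps}]<\infty$ enters: the paper proves it (Lemma \ref{lemma:supmkn}) by applying Doob's/optional-sampling maximal inequalities to the higher-order martingales $M^k_n(i)=c^k_n\binom{\Zm_n(i)+\F_i+k-1}{k}$ with $k\in(\theta_m,M)$, summing over $i$ by Borel--Cantelli, and then invoking the double-array result of Athreya (Proposition \ref{lemma:sequences}). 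A Borel--Cantelli bound on $\F_i i^{-1/\theta_m}$ together with an $L^2$/variance estimate for the first-order martingale cannot replace this, because one needs tail control at moment order strictly above $\theta_m$, which your $k=1$ martingale does not provide.

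In the strong-disorder regime your concentration route also fails quantitatively. With only second moments, the union bound gives $\sum_{i\le n}\Var_\F(\Zm_n(i))/(\eta u_n)^2\asymp\sum_i\F_i(n/i)^{2/\theta_m}/u_n^2$, which is of order $n^{1-2/(\alpha-1)}$ when $\theta_m>2$, and this diverges for every $\alpha\in(3,1+\theta_m)$ --- a non-empty part of the regime. Freedman-type exponential bounds are blocked because the martingale increments contain the unbounded fitness, and one cannot simply pass to higher moments of $\Zm_n(i)$ since those involve $\mathbb{E}[\F^p]$ with $p>\alpha-1$, which is infinite. The paper's Proposition \ref{lemma:concentration} resolves precisely this: it bounds the $p$-th centred moment with $p=(1+\eps)(\alpha-1)$ via the even-order martingales $M^{2k}_n(i)$, exploiting the cancellation $\mathbb{E}_\F[(\Zm_n(i)+\F_i)^{2k}]-\mathbb{E}_\F[\Zm_n(i)+\F_i]^{2k}$ (together with $c^{2k}_r/c^{2k}_n\le(c^1_r/c^1_n)^{2k}$) so that only powers $\F_i^{p(1-1/2k)}<\F_i^{\alpha-1}$ appear; without this (or an equivalent device) the union bound cannot be closed. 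A further, smaller, gap is the positivity of the martingale limit (``Pólya-urn type argument''): with unbounded fitness and, for PAFFD, only a super/submartingale structure, this needs the negative-parameter martingales and the stopping-time construction of Lemma \ref{lemma:noatom} rather than a citation to the constant-fitness literature.
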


\section{Overview of the proofs}\label{sec:overview}

In this section, we give a short overview of the proofs of the main theorems
and the structure of the remaining paper.

In Section~\ref{sec:degree} we prove Theorems~\ref{Thrm:degree} and~\ref{Thrm:pkasymp}. 
In order to prove Theorem \ref{Thrm:degree}, we  use the theory of stochastic approximation in a similar setup as in~\cite{DerOrt14}, where it was used for models with multiplicative fitness. 

The main idea is to consider, for $0 \leq f < f' < \infty$, the quantities  \be
\Gamma_n((f,f'])=\frac{1}{n}\sum_{i=1}^n \zni \ind_{\{\F_i\in(f,f']\}},\quad \text{and} \quad \Gamma^{(k)}_n((f,f'])=\frac{1}{n}\sum_{i=1}^n \ind_{\{\zni=k,\F_i\in(f,f']\}},\quad k\geq 0,
\ee 
where $0\leq f<f'<\infty$. 
Then, by considering the conditional increment  and using the preferential attachment dynamics, 
we show that 
\be
\Gamma_{n+1}((f,f'])-\Gamma_n((f,f'])\leq \frac{1}{n+1}(A_n-B_n \Gamma_n((f,f']))+(R_{n+1}-R_n),
\ee 
and also a similar lower bound with slightly different sequences $A_n,B_n$.
This should be interpreted as a time-discretisation of a differential inequality.
Then, a basic stochastic approximation argument (see also Lemma~\ref{lemma:stochapprox} below) shows that if $A_n, B_n$ and $R_n$ converge almost surely, then we obtain an upper bound on the $\limsup$ of $\Gamma_n((f,f'])$
(and similarly a lower bound). By an approximation argument this yields 
convergence of $\Gamma_n$. We obtain similar bounds for  $\Gamma_n^{(k)}((f,f'])$ (involving $\Gamma_n^{(k-1)}((f,f'])$) 
so that with an induction argument we also can deduce convergence of $\Gamma_n^{(k)}$.

In the last part of Section~\ref{sec:degree} we prove Theorem \ref{Thrm:pkasymp} 
using  standard arguments.

The remainder of the paper deals with the asymptotics of the degree of a fixed vertex, as well as the maximal degree, as stated in Theorem~\ref{Thrm:maxdegree}.  In the following we only discuss the proof for the PAFUD model, but the proofs for the PAFRO model and PAFFD model follow with minor modifications. 

A central tool in the analysis of the degree evolutions is the following martingale introduced by~\cite{Mori05} in the context of classical preferential attachment (see also~\cite{Hof16}). 
For $k \geq - \min\{ \F_i,1\}$, define a  sequence
\be \label{eq:mart}
M^k_n(i):=c^k_n{\zni+\F_i+(k-1)\choose k},\ 
\ee 
where $c^k_n$ is a carefully chosen normalisation sequence and
\[ { a \choose b} = \frac{\Gamma(a+1)}{\Gamma(b+1) \Gamma(a-b+1)}, \quad \mbox{for } a,b>-1 \mbox{ such that } a-b>-1, \]
is the generalized binomial coefficient defined in terms of the Gamma function $\Gamma$.
Next, we write 
\[ \mathbb{P}_\F \quad \mbox{ and } \quad \mathbb{E}_\F  \]
for the (regular) conditional probability measure (and its expectation respectively) when conditioning on the fitness values 
$\F_1, \F_2, \ldots$.
Then, as for the standard preferential model, one can show that 
$(M^k_n(i), n \geq i)$ is a martingale under the conditional measure $\mathbb{P}_\F$.

Note also that with $k =1$,
\[ \cZ_n(i) = (c_n^1)^{-1} M_n^1(i) - \F_i , \] 
and $M_n^1(i)$ converges being a non-negative martingale. So for fixed $i$, the asymptotics are determined by 
$c_n^1$. Indeed, we will see that
\begin{equation}\label{eq:asymp_cnk} c_n^k \approx \prod_{j=1}^{n-1} \Big(1 - \frac{k}{mj + S_j }\Big)^m
\approx \exp\bigg\{- \sum_{j=1}^{n-1} \frac{k}{j + S_j / m }\bigg \}  ,  \end{equation}
where $S_j = \sum_{\ell = 1}^j \cF_\ell$. In Lemma~\ref{lemma:ckn}, we will prove that if $\E{\F} < \infty$, then  by the law of large numbers
the sequence $c^k_n$ rescaled by $n^{k/\theta_m}$ converges almost surely. 
Moreover, if  $\alpha\in(1,2)$ (for a power law fitness distribution), then $c^k_n$ converges almost surely without rescaling. This proves the first two statements~\eqref{eq:maxconv1} and \eqref{eq:maxconv2} of Theorem~\ref{Thrm:maxdegree}.

To prove the statements about the maximal degree, we first consider 
the conditional expectation of $\cZ_n(i)$ which using the martingale $M_n^1{(i)}$
can be written as
\begin{equation}\label{eq:cond_moment} \Ef{}{\cZ_n(i)} = \F_i \Big( \frac{c_i^1}{c_n^1} - 1\Big) , \end{equation}
at least for $i > n_0$, otherwise a small correction is necessary. From this point, the proofs in the the three different regimes deviate from each other. 

First, if we assume that $\E{\F} < \infty$, then by~\eqref{eq:cond_moment} and 
the  the asymptotics of $c_n^1$ from above
we can deduce that
\begin{equation}\label{eq:first_moment} \Ef{}{\cZ_n(i)}  \approx \F_i \Big( \Big( \frac{n}{i}\Big)^{1/\theta_m} - 1\Big) . \end{equation}

Now, suppose that $\E{\F^{\theta_m + \eps}} < \infty$ for some $\eps > 0$. Then, in Lemma \ref{lemma:supmkn}, we show that 
\be
\lim_{i\to\infty}\sup_{n\geq n_0\vee i}M^1_n(i)=0.
\ee 
Intuitively, this follows from~\eqref{eq:first_moment}, since under the assumption
the maximum of the fitness values satisfies $\max_{i \in [n]} \F_i 
= o (n^{1/\theta_m})$ (with high probability), so that the term 
$( \frac{n}{i} )^{1/\theta_m}$ dominates for $i$ small. 
Thus, together with a concentration argument we obtain the weak disorder result~\eqref{eq:indexmaxconv}. 

Next, we consider the \emph{strong disorder regime}, where the fitness distribution is a power law with parameter $\alpha$  with $\alpha \in (2, 1 + \theta_m)$. Extreme value theory tell us that in this case $\max_{i \in [n]} \F_i \approx n^{-1/(\alpha -1)}$ so that~\eqref{eq:first_moment} suggests that in this regime vertices with high fitness have a chance to compete with the old vertices. To capture the asymptotics  of  the peaks of the fitness landscape more precisely, 
we consider the point process
\be \label{eq:Pin}
\Pi_n:=\sum_{i=1}^n \delta_{(i/n,\F_i/u_n)}, 
\ee 
where $u_n := \inf\{ t \geq 0 \, : \, \mathbb{P}(\F \geq t) \geq 1/n\}$. Then, classical extreme value theory (see e.g.\ the exposition in~\cite{Res13}) tells us that 
\[ \Pi_n \Rightarrow \Pi , \]
where $\Pi$ is a Poisson point process on $(0,1) \times (0,\infty)$ with 
intensity measure $\nu(\d t,\d x):=\d t \times (\alpha-1)x^{-\alpha}\d x$
(see also Section~\ref{sec:infmean} below for more details). 
From this convergence, we can then deduce using~\eqref{eq:first_moment} that
\be \label{eq:meansteps}
\max_{\inn}\Ef{}{\zni/u_n}\toindis \sup_{(t,f)\in\Pi}f(t^{-1/\theta_m}-1),
 \ee 
see the first part of Proposition~\ref{lemma:condmeanconv} for details.
A non-trivival part of the proof is showing that the approximation in~\eqref{eq:first_moment}  works sufficiently well for the relevant range of $i$.
The proof of Theorem~\ref{Thrm:maxdegree} is then completed by showing concentration of $\cZ_n(i)$ around its conditional mean, so that
\[ 
\max_{\inn}\zni/u_n-\max_{\inn}\Ef{}{\zni/u_n}\toinp 0.  \]
The concentration argument relies on the martingale $M_n^k(i)$ for carefully chosen $k$ (which correspond approximately to $k^{\mathrm{th}}$ moments of $\cZ_n(i)$), see the first part of Proposition~\ref{lemma:concentration}.

Finally, we consider the \emph{extreme disorder regime}, where $\alpha \in (1,2)$ so that the fitness does not have finite first moments. In particular, the law of large numbers no longer applies to the sum $S_n = \sum_{i=1}^n \F_i$
appearing in the normalizing constant in the attachment probabilities.
In this case, we obtain from~\eqref{eq:asymp_cnk} that for $i$ of order $n$
\[ \frac{c_i^1}{c_n^1} -1  \approx \exp\bigg\{m \sum_{j=i}^{n-1} \frac{1}{S_j}\bigg\} -1 \approx  m \sum_{j=i}^{n-1} \frac{1}{S_j}. \]
Then, it follows from~\eqref{eq:cond_moment} with the same $\Pi_n$ as in~\eqref{eq:Pin} that
\be \label{eq:functional}
\begin{aligned} 
\frac{\Ef{}{\cZ_n(i) }}{n} & \approx m
\frac{\F_i}{u_n} \Big( \frac{1}{n} \sum_{j=i}^n \frac{u_n}{S_j}\Big) \\
& =m \frac{\F_i}{u_n}\int_{i/n}^1 \Big(\int_E f\ind_{\{t\leq s\}}\d \Pi_n(f,t)\Big)^{-1}\d s \\
& =: m\frac{\F_i}{u_n}T^{i/n}(\Pi_n),
\end{aligned} 
\ee 
where $E:=(0,1)\times(0,\infty)$. 
From this we can eventually deduce that 
\[ \max_{\inn}\Ef{}{\zni/n}\toindis m\sup_{(t,f)\in\Pi}  f \int_t^1\bigg(\int_E g\ind_{\{u\leq s\}}\d \Pi(u,g)\bigg)^{-1}\d s . \]
Unfortunately, the corresponding functionals are not directly continuous in $\Pi_n$, so that the arguments involve careful cut-off arguments (see Section~\ref{sec:infmean}). 

Then, the final step is to show concentration 
\[ \max_{\inn} \zni/n-\max_{\inn}\Ef{}{\zni/n}\toinp 0 ,  \] 
which again uses the martingale $M_n^1(i)$, but in this case is slightly easier than for $\alpha > 2$.

Overall, the proof of Theorem~\ref{Thrm:maxdegree} is structured in the following way. In Section~\ref{sec:infmean}, we will first show convergence of the functional $T^{i/n}(\Pi_n)$ introduced in~\eqref{eq:functional}. 
Here, we take the opportunity to recap some of the basics of convergence of point process convergence and we will also carry out the technical cut-off arguments. Then, in Section~\ref{sec:ppp} we introduce the martingales $M_n^k(i)$ more formally and prove some of their properties. In particular, we then use those to show concentration in all three cases and also we show the point process convergence in the strong disorder case, where we can then refer back to the technical details dealt with in Section~\ref{sec:infmean} for the extreme disorder case. 
Finally, in Section \ref{sec:mainproof} we prove Theorem \ref{Thrm:maxdegree} by gathering together all the necessary results from the previous two sections.
 
\section{Degree and fitness distributions}\label{sec:degree}

This section is devoted to first proving Theorem~\ref{Thrm:degree} using the ideas of stochastic approximation and then at the end we prove Theorem~\ref{Thrm:pkasymp}. However, before we prove Theorem~\ref{Thrm:degree}, we introduce several lemmas that are required for the proof. The first lemma comes from \cite[Lemma 3.1]{DerOrt14}, which is the main ingredient in the proof of Theorem \ref{Thrm:degree}:

\begin{lemma}\label{lemma:stochapprox}
	Let $(X_n)_{n\geq 0}$ be a non-negative stochastic process. We suppose that the following estimate holds:
	\be 
	X_{n+1}-X_n\leq \frac{1}{n+1}(A_n-B_nX_n) +R_{n+1}-R_n,
	\ee 
	where
	\begin{enumerate}[label=(\roman*)]
		\item $(A_n)_{n\geq 0}$ and $(B_n)_{n\geq 0}$ are almost surely convergent stochastic processes with deterministic limits $A,B>0$.
		\item $(R_n)_{n\geq 0}$ is an almost surely convergent stochastic process.
	\end{enumerate}
	Then, almost surely,
	\be 
	\limsup_{n\rightarrow\infty}X_n\leq \frac{A}{B}.
	\ee 
	Similarly, if instead, under the same conditions $(i)$ and $(ii)$,
	\be 
	X_{n+1}-X_n\geq \frac{1}{n+1}(A_n-B_nX_n) +R_{n+1}-R_n,
	\ee 
	then almost surely,
	\be 
	\liminf_{n\rightarrow \infty}X_n \geq \frac{A}{B}.
	\ee 
\end{lemma}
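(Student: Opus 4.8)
The plan is to run the classical ``ODE method'' (stochastic approximation) comparison argument, reducing the statement to a deterministic affine recursion. All the hypotheses are almost sure statements, so fix the almost sure event $\Omega_0$ on which $A_n\to A$, $B_n\to B$ and $R_n$ converges, and argue pathwise on $\Omega_0$; write $R_\infty:=\lim_n R_n$. The key first move is to pass to the \emph{shifted} process $Y_n:=X_n-R_n$. Subtracting off $R_n$ is essential: convergence of $(R_n)$ does not make its total variation $\sum_n|R_{n+1}-R_n|$ summable, so one cannot simply discard the increments $R_{n+1}-R_n$. From the hypothesis,
\be
Y_{n+1}-Y_n=(X_{n+1}-X_n)-(R_{n+1}-R_n)\le \frac{1}{n+1}\big(A_n-B_nX_n\big)=\frac{1}{n+1}\big(\widetilde A_n-B_nY_n\big),
\ee
where $\widetilde A_n:=A_n-B_nR_n\to A-BR_\infty=:\widetilde A$. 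Equivalently, $Y_{n+1}\le\big(1-\tfrac{B_n}{n+1}\big)Y_n+\tfrac{\widetilde A_n}{n+1}$ for all $n$.

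Next I would choose $N$ large (depending on the path) so that $B_j\le j+1$ for all $j\ge N$; this guarantees $1-\tfrac{B_j}{j+1}\ge 0$, which is exactly what is needed for the one-sided inequality to be preserved when the bound for $Y_k$ is fed into the recursion for $Y_{k+1}$. Unfolding from $N$ to $n$ gives
\be
Y_n\le Y_N\prod_{j=N}^{n-1}\Big(1-\frac{B_j}{j+1}\Big)+\sum_{k=N}^{n-1}\frac{\widetilde A_k}{k+1}\prod_{j=k+1}^{n-1}\Big(1-\frac{B_j}{j+1}\Big).
\ee
Since $B_j\to B>0$ we have $\sum_j B_j/(j+1)=\infty$, hence $\prod_{j=N}^{n-1}(1-B_j/(j+1))\to 0$ as $n\to\infty$, so the first term vanishes. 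Writing $a_k:=B_k/(k+1)\in[0,1]$ for $k\ge N$, the second term equals $\sum_{k=N}^{n-1} a_k\,(\widetilde A_k/B_k)\prod_{j=k+1}^{n-1}(1-a_j)$, which is a convex-combination-type average of the numbers $\widetilde A_k/B_k$: the weights sum to $1-\prod_{j=N}^{n-1}(1-a_j)\to 1$ (a telescoping identity), and since $\widetilde A_k/B_k\to\widetilde A/B$, the elementary discrete Toeplitz/Ces\`aro lemma (the discrete analogue of $\int a=\infty$ forcing a solution of $\dot z=a(t)(x(t)-z)$ to track $x(t)$) shows this average converges to $\widetilde A/B$ as $n\to\infty$. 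Hence $\limsup_n Y_n\le \widetilde A/B$, and therefore
\be
\limsup_{n\to\infty}X_n=\limsup_{n\to\infty}(Y_n+R_n)\le \frac{\widetilde A}{B}+R_\infty=\frac{A}{B}.
\ee

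The lower bound is entirely symmetric: under the reversed hypothesis one gets $Y_{n+1}\ge\big(1-\tfrac{B_n}{n+1}\big)Y_n+\tfrac{\widetilde A_n}{n+1}$, which unfolds in the same way — again crucially using $1-B_j/(j+1)\ge0$ for $j\ge N$ so the $\ge$ is preserved — and the same averaging lemma gives $\liminf_n Y_n\ge\widetilde A/B$, hence $\liminf_n X_n\ge A/B$. The only genuinely substantive ingredient is the elementary averaging lemma; everything else is bookkeeping. The two points needing a little care are (a) fixing $N$ large enough that all the factors $1-B_j/(j+1)$ are nonnegative \emph{before} unfolding, and (b) carrying out the whole argument pathwise on $\Omega_0$, since the effective limit $\widetilde A=A-BR_\infty$ is random even though $A$ and $B$ are not. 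This reproduces \cite[Lemma 3.1]{DerOrt14}.
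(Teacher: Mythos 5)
Your proof is correct, but note that there is no in-paper argument to measure it against: the paper imports this statement verbatim from \cite[Lemma 3.1]{DerOrt14} and gives no proof, so what you have produced is a self-contained substitute. Your route — absorbing the increments of $R$ by passing to $Y_n:=X_n-R_n$, rewriting the hypothesis as the affine recursion $Y_{n+1}\le\bigl(1-\tfrac{B_n}{n+1}\bigr)Y_n+\tfrac{\widetilde A_n}{n+1}$ with $\widetilde A_n=A_n-B_nR_n\to A-BR_\infty$, unfolding it once the coefficients are nonnegative, and then applying the Toeplitz/weighted-average lemma with weights $a_k\prod_{j=k+1}^{n-1}(1-a_j)$ that telescope to $1-\prod_{j=N}^{n-1}(1-a_j)\to 1$ — is a clean ``linearize and average'' version of the stochastic-approximation comparison, and it does yield $\limsup_n Y_n\le\widetilde A/B$ and hence $\limsup_n X_n\le A/B$, with the lower bound obtained symmetrically. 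Two small bookkeeping points, neither of which is a gap: when you write $a_k=B_k/(k+1)\in[0,1]$ and divide by $B_k$, the choice of $N$ should also guarantee $B_k>0$ (say $B_k\ge B/2$) for $k\ge N$, which is immediate from $B_k\to B>0$ but is needed both for $a_k\ge 0$ and for the quotient $\widetilde A_k/B_k$; and the Toeplitz step should record that the $\widetilde A_k/B_k$ are bounded (they converge) and that the weight assigned to any fixed initial block of indices tends to $0$ as $n\to\infty$, which is what forces the weighted averages to converge to $\widetilde A/B$. It is also worth noticing that your argument never uses the nonnegativity of $X_n$, so you have in fact proved a marginally more general statement than the one cited.
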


In the next lemma, we discuss two specific examples of the stochastic process $R_n$ as introduced in Lemma \ref{lemma:stochapprox}, which are used in the proof of Theorem \ref{Thrm:degree}:

\begin{lemma}\label{lemma:rnconv}
	Recall $\Gamma_n$ and $\Gamma_n^{(k)}$ from \eqref{eq:gammas} and let $0\leq f<f'<\infty,k\in\N_0$ and assume the fitness distribution has a finite mean. We then have the two following results:
	\begin{enumerate}
		\item[$(i)$] Set $X_n:=\Gamma_n^{(k)}((f,f'])$, $\Delta R_n:=X_{n+1}-\E{X_{n+1}\,|\,\G_n}$ and $R_n:=\sum_{j=n_0}^n \Delta R_j$. Then $R_n$ converges almost surely.
		\item[$(ii)$] Set $X_n:=\Gamma_n((f,f'])$, $\Delta R_n:=X_{n+1}-\E{X_{n+1}\,|\,\G_n}$ and $R_n:=\sum_{j=n_0}^n \Delta R_j$. Then $R_n$ converges almost surely.
		\end{enumerate}
\end{lemma}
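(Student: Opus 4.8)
The key observation is that in both cases $R_n = \sum_{j=n_0}^n \Delta R_j$ is a martingale with respect to the filtration $(\G_n)_{n \geq n_0}$, since $\Delta R_j = X_{j+1} - \E{X_{j+1}\,|\,\G_j}$ has zero conditional mean by construction. The plan is therefore to invoke the $L^2$-martingale convergence theorem: it suffices to show $\sum_{j \geq n_0} \E{(\Delta R_j)^2} < \infty$, or even just $\sum_{j} \E{(\Delta R_j)^2 \,|\, \G_j} < \infty$ almost surely combined with the convergence theorem for martingales with bounded increments plus summable conditional variance. Since $\Delta R_j = (X_{j+1} - X_j) - \E{X_{j+1} - X_j \,|\, \G_j}$, we have $\Var(\Delta R_j \,|\, \G_j) = \Var(X_{j+1} - X_j \,|\, \G_j) \leq \E{(X_{j+1}-X_j)^2 \,|\, \G_j}$, so the whole thing reduces to bounding the conditional second moment of the one-step increment of $X_n$.

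For part $(i)$, where $X_n = \Gamma_n^{(k)}((f,f']) = \frac1n \sum_{i=1}^n \1{\zni = k, \F_i \in (f,f']}$, I would write out the increment $X_{n+1} - X_n$ explicitly. Passing from $\G_n$ to $\G_{n+1}$ changes $X_n$ in three ways: the prefactor $\frac1n$ becomes $\frac1{n+1}$ (a deterministic $O(1/n^2)$ change to a quantity of order $1$), the new vertex $n+1$ may be added to the count if it has in-degree $k$ (only possible for $k=0$, contributing $O(1/n)$), and at most $m$ of the existing vertices change their in-degree, each such change moving a vertex in or out of the set $\{\zni = k\}$ and contributing $O(1/n)$ to $X_{n+1}-X_n$. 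Hence $|X_{n+1} - X_n| \leq C/n$ for a deterministic constant $C$ depending only on $m$, and therefore $\E{(\Delta R_j)^2 \,|\, \G_j} \leq \E{(X_{j+1}-X_j)^2\,|\,\G_j} \leq C^2/j^2$, which is summable. Part $(ii)$, where $X_n = \Gamma_n((f,f']) = \frac1n \sum_{i=1}^n \zni \1{\F_i \in (f,f']}$, is essentially identical: the numerator $\sum_{i=1}^n \zni \1{\F_i \in (f,f']}$ increases by at most $m$ at each step (the new vertex contributes $0$ to the in-degree sum, and the total in-degree of old vertices increases by exactly $m$, or by the random out-degree which is at most... — here one should be slightly careful in the PAFRO case, but the out-degree is $1$ there), and the sum itself is of order $n$ (bounded by $\frac{m_0 + m(n-n_0)}{n}$ after dividing), so again $|X_{n+1}-X_n| \leq C/n$ and the same argument applies.

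The main obstacle — really the only point requiring care — is making the increment bound $|X_{n+1} - X_n| \leq C/n$ uniform and deterministic across all three model variants (PAFRO, PAFFD, PAFUD), since in the PAFRO case the out-degree of vertex $n+1$ is a priori random. This is handled by the standing assumption that PAFRO has $m=1$, so the out-degree is exactly $1$, and in all cases the total degree added at step $n+1$ is a fixed constant; thus the numerators of $X_n$ in both parts are Lipschitz in $n$ with a model-independent constant. Once that uniform $O(1/n)$ increment bound is in hand, summability of $\sum_j \E{(\Delta R_j)^2 \,|\, \G_j}$ is immediate, and the $L^2$ (or the Doob) martingale convergence theorem gives almost sure convergence of $R_n$, completing both parts.
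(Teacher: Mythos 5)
Your martingale framing is right, and for the PAFFD and PAFUD models your argument is correct and in fact simpler than the paper's: since exactly $m$ half-edges are added per step, at most $m$ old vertices change in-degree and the total in-degree grows by exactly $m$, so $|X_{n+1}-X_n|\leq C/n$ deterministically in both $(i)$ and $(ii)$, giving $\E{(\Delta R_j)^2\,|\,\G_j}\leq C^2/j^2$ and $L^2$-boundedness of $R_n$ without ever invoking the negative quadrant dependence that the paper uses (Lemma~\ref{lemma:nqd} and the bounds \eqref{eq:Rnconvbound}, \eqref{eq:varbound}).

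The genuine gap is the PAFRO case. You assert that ``PAFRO has $m=1$, so the out-degree is exactly $1$,'' but that is a misreading of Definition~\ref{def:paf}: in PAFRO, conditionally on $\G_n$, the new vertex $n+1$ connects to \emph{each} $i\in[n]$ independently-in-law (only non-positively correlated) with probability $(\Zm_n(i)+\F_i)/(m_0+(n-n_0)+S_n)$, so its out-degree is a random sum of Bernoulli variables with mean roughly $1$ but no deterministic upper bound; it can be as large as $n$. Consequently the key step of your proof, the deterministic increment bound $|X_{n+1}-X_n|\leq C/n$, fails for PAFRO in both $(i)$ and $(ii)$, and the conditional-variance bound $C^2/n^2$ does not follow. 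To close the gap one has to use the structure of the model: the assumed non-positive correlation of the increments $(\Delta\Zm_n(i))_{i\in[n]}$ lets you bound $\E{(\Delta R_n)^2}$ by $(n+1)^{-2}\sum_{i=1}^n\E{\Delta\Zm_n(i)}$ (since each $\Delta\Zm_n(i)$ is Bernoulli, its variance is at most its mean), and then a recursion on conditional expectations gives $\E{\Zm_n(i)+\F_i}\leq (m_0+\E{\F})(m_0+n-n_0)/(m_0+(i\vee n_0-n_0))$, whence $\sum_{i=1}^n\E{\Delta\Zm_n(i)}=O(\log n)$ and $\E{(\Delta R_n)^2}=O(\log n/n^2)$, which is summable; this yields $L^2$-boundedness and almost sure convergence of $R_n$. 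This is exactly the extra argument the paper supplies for PAFRO, and without it (or some substitute controlling the random out-degree) your proof does not cover that model.
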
 

Before proving Lemma \ref{lemma:rnconv}, we recall the concept of negative quadrant dependence (NQD) as introduced in \eqref{eq:AssA5}. We note that the PAFRO model has been defined with an additional assumption of non-positively correlated degree increments. Note that, since the degree increments in this model are Bernoulli random variables, NQD is equivalent to non-positive correlation. For the PAFFD and PAFUD models, NQD follows directly from the definition of the model, as we show in the following lemma:

\begin{lemma}\label{lemma:nqd}
	Recall the degree increments $\Delta \zni:=\Zm_{n+1}(i)-\zni$. For the PAFUD and PAFFD model, the $(\Delta \zni)_{\inn}$ are negative quadrant dependent, in the sense of \eqref{eq:AssA5}.
\end{lemma}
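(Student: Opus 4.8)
The plan is to establish the stronger property that, conditionally on $\G_n$, the increment vector $(\dzn{i})_{i\in[n]}$ is \emph{negatively associated} (NA): for all disjoint $A,B\subseteq[n]$ and all coordinatewise non-decreasing $f,g$ one has $\cov(f(\dzn{i},i\in A),g(\dzn{j},j\in B))\le 0$. Applying this with $A=\{i\}$, $B=\{j\}$, $f(x)=\ind_{\{x\ge k+1\}}$, $g(y)=\ind_{\{y\ge l+1\}}$, and using $\ind_{\{X\le k\}}=1-\ind_{\{X\ge k+1\}}$, recovers exactly~\eqref{eq:AssA5}. Everything below is conditional on $\G_n$, which fixes $\Zm_n(i)$ and $\F_i$ for $i\in[n]$; write $W:=\sum_{i=1}^n(\Zm_n(i)+\F_i)=m_0+m(n-n_0)+S_n$ for the common denominator in Definition~\ref{def:paf}.

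For the PAFFD model the claim is immediate: conditionally on $\G_n$, the $m$ half-edges of vertex $n+1$ attach independently, each to $i\in[n]$ with probability $p_i:=(\Zm_n(i)+\F_i)/W$, and $\sum_{i=1}^n p_i=1$. Hence $(\dzn{i})_{i\in[n]}$ is a $\mathrm{Multinomial}(m;p_1,\dots,p_n)$ vector, which is negatively associated by the classical result of Joag-Dev and Proschan, and NQD follows as above.

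For the PAFUD model the half-edges are attached one by one, increasing the weight of the chosen vertex by one each time; the denominator at the $j$-th attachment equals $W+(j-1)$, so, conditionally on $\G_n$, this is precisely a P\'olya urn with initial masses $\Zm_n(i)+\F_i$, $i\in[n]$, and unit reinforcement, run for $m$ steps. By the classical (de Finetti) Dirichlet-mixture representation of P\'olya urn sequences, conditionally on $\G_n$ we may equivalently sample $\Theta=(\Theta_1,\dots,\Theta_n)\sim\mathrm{Dirichlet}(\Zm_n(1)+\F_1,\dots,\Zm_n(n)+\F_n)$ and then attach the $m$ half-edges i.i.d.\ according to $\Theta$, so that given $(\G_n,\Theta)$ the vector $(\dzn{i})_{i\in[n]}$ is $\mathrm{Multinomial}(m;\Theta)$. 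Since a mixture of NA vectors need not be NA, we argue directly on a fixed pair $i\ne j$: conditioning on $\Theta$ and using multinomial NQD,
\[
\condP{\dzn{i}\le k,\ \dzn{j}\le l}{\G_n}\ \le\ \condE{h_k(\Theta_i)\,h_l(\Theta_j)}{\G_n},\qquad h_r(\theta):=\P{\mathrm{Bin}(m,\theta)\le r}.
\]
Each $h_r$ is non-increasing in $\theta$, and the Dirichlet law is negatively associated (for instance because $\Theta_i=G_i/\sum_\ell G_\ell$ for independent Gamma variables $G_\ell$), so $\cov(h_k(\Theta_i),h_l(\Theta_j)\mid\G_n)\le 0$; together with $\condE{h_r(\Theta_i)}{\G_n}=\condP{\dzn{i}\le r}{\G_n}$ this gives~\eqref{eq:AssA5}.

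The main obstacle is the PAFUD case: the reinforcement couples the destinations of successive half-edges, so one cannot argue by independence as for PAFFD, and full negative association is not obviously preserved by the Dirichlet mixing. The resolution above uses only (i) the conditional NQD of the multinomial and (ii) the NA of the mixing Dirichlet vector, each applied to a single pair of coordinates --- exactly what~\ref{Ass:A5} demands. A more elementary induction on the number $m$ of half-edges (conditioning on the destination of the first one) fails for the same structural reason, since the product of the two marginal bounds does not dominate the sum over that destination of the conditional joint bounds; hence the P\'olya/Dirichlet route (or a direct proof that the Dirichlet--multinomial distribution is NA) appears to be the cleanest.
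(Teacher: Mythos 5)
Your proof is correct, but it takes a genuinely different route from the paper's. The PAFFD half is identical (multinomial increments, NA by \cite{JoaPro83}). For PAFUD, you identify the conditional law of $(\dzn{i})_{i\in[n]}$ given $\G_n$ as a P\'olya urn with initial masses $\Zm_n(i)+\F_i$ and unit reinforcement, and exploit the Dirichlet-mixture representation: multinomial NQD given $\Theta$, then monotonicity of the binomial tail functions $h_r$ together with pairwise NQD of the Dirichlet. The paper instead argues directly on the sequential attachments: it conditions only on the indicator pair $(X_1,Z_1)$ (hit $i$ or hit $j$), uses single-trial multinomial NQD for $(X_2,Z_2)$, a conditional-independence observation ($X_2$ given $X_1$ is independent of $Z_1$), and monotonicity plus negative association of $(X_1,Z_1)$, iterating over the $m$ half-edges. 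Your route is arguably cleaner and more conceptual --- in fact it can be shortened further, since the Dirichlet compound multinomial (exactly the conditional law of the PAFUD increments) is itself listed as negatively associated in \cite{JoaPro83}, so the whole PAFUD case follows by citation; this also repairs the one soft spot in your write-up, namely the parenthetical Gamma-ratio justification of Dirichlet NA, which as stated is not a proof (normalisation by the sum does not by itself yield NA, and the log-concavity/conditioning argument would need shape parameters at least one, whereas here $\Zm_n(i)+\F_i$ may be smaller); alternatively, pairwise NQD of $(\Theta_i,\Theta_j)$ follows from the fact that $\Theta_j$ given $\Theta_i=x$ is distributed as $(1-x)$ times a Beta variable independent of $x$, hence stochastically decreasing. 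Finally, your closing claim that an induction over the $m$ half-edges ``fails'' overstates the obstacle: conditioning on the full destination of the first half-edge indeed does not telescope, but conditioning only on the pair of indicators, as the paper does, makes the induction go through.
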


\begin{proof}
	The NQD of the PAFFD model directly follows from \cite{JoaPro83}, as $(\Delta \zni)_{\inn}$ forms a multinomial distribution, for which NQD is known. For the PAFUD model, $(\Delta \zni)_{\inn}$ is a convolution of unlike multinomial distributions (the probabilities of the multinomial distribution change at each step/sampling), for which NQD is proved in \cite{JoaPro83} as well. However, since the changes in the probabilities are dependent on the previous samplings (where previous edges are attached), we require a slightly more careful argument. Let us write $\Delta\zni:=X_1+\ldots+X_m,\Delta \Zm_n(j):=Z_1+\ldots+Z_m$, where the $X_k,Z_k$ are Bernoulli random variables which take value $1$ if the $k^{\mathrm{th}}$ edge of vertex $n+1$ connects to $i,j$, respectively, $k\in[m]$. Since $X_1,Z_1$ are part of a multinomial vector with one trial, \eqref{eq:AssA5} holds for these random variables. Then, we investigate $X_1+X_2,Z_1+Z_2$, where we prove \eqref{eq:AssA5} for $X_1+X_2,Z_1+Z_2$, but with $\geq$ rather than $\leq$ in the event, which is an equivalent definition of NQD. We write, for $k,\ell\geq 0$,
	\be 
	\P{X_1+X_2\geq k,Z_1+Z_2\geq \ell\;|\;\G_n}=\E{\P{X_2\geq k-X_1,Z_2\geq \ell-Z_1\;|\;\G_n,X_1,Z_1}\;|\;\G_n}.
	\ee 
	Since conditional on $\G_n$ and $(X_1,Z_1)$, the random variables $(X_2,Z_2)$ are part of a multinomial vector with a single trial, by the same argument we used for $X_1,Z_1$, we obtain the upper bound
	\be \label{eq:nqdstep}
	\E{\P{X_2\geq k-X_1\;|\;\G_n,X_1,Z_1}\P{Z_2\geq \ell-Z_1\;|\;\G_n,X_1,Z_1}\;|\;\G_n}.
	\ee 
	It follows from the definition of the PAFUD model that $X_2$, conditional on $X_1$, is independent of $Z_1$ and $Z_2$, conditional on $Z_1$, is independent of $X_1$. Then, as the probabilities in \eqref{eq:nqdstep} are increasing functions of $X_1,Z_1$, respectively, it follows from the definition of negative association in \cite{JoaPro83}, which is equivalent to NQD, that
	\be \ba
	\mathbb{E}&[\P{X_2\geq k-X_1\;|\;\G_n,X_1,Z_1}\P{Z_2\geq \ell-Z_1\;|\;\G_n,X_1,Z_1}\;|\;\G_n]\\
	&\leq \E{\P{X_2\geq k-X_1\;|\;\G_n,X_1}\;|\;\G_n}\E{\P{Z_2\geq \ell-Z_1\;|\;\G_n,Z_1}\;|\;\G_n}\\
	&=\P{X_1+X_2\geq k\;|\;\G_n}\P{Z_1+Z_2\geq \ell\;|\;\G_n}.
	\ea\ee
	We can continue the same argument to obtain the same inequality for the $m$ terms in $\Delta\zni=X_1+\ldots+X_m,\Delta \Zm_n(j)=Z_1+\ldots+Z_m$. We then recall that this result is equivalent to \eqref{eq:AssA5}, as required.
\end{proof}

We now prove Lemma \ref{lemma:rnconv}.

\begin{proof}[Proof of Lemma \ref{lemma:rnconv}]
	We first note that, in both cases, $R_n$ is a zero-mean martingale with respect to $\G_n$. The convergence of $R_n$ can be proved by showing its martingale increments $\Delta R_n=R_{n+1}-R_n$ have summable conditional second moments, or have summable second moments. We first deal with case $(i)$. We write $\Delta R_n$ as the difference of two martingales. For $k\geq 1$,
	\be
	\Delta R_n=\frac{1}{n+1}\sum_{i\in\I_n}\left(\ind_{\{\Zm_{n+1}(i)=k\}}-\P{\Zm_{n+1}(i)=k\;|\;\G_n}\right)=\Delta M^{(1)}_n-\Delta M_n^{(2)},
	\ee 
	where $\Delta M_n^{(i)}:=M_{n+1}^{(i)}-M_n^{(i)},\ i\in\{1,2\}$, and
	\be \ba \label{eq:mns}
	M_{n+1}^{(1)}&:=M_n^{(1)}+\frac{1}{n+1}\bigg(\sum_{i\in\I_n}\ind_{\{\Zm_n(i)<k,\Zm_{n+1}(i)\geq k+1\}}-\mathbb{E}\bigg[\sum_{i\in\I_n}\ind_{\{\Zm_n(i)<k,\Zm_{n+1}(i)\geq k\}}\,\bigg|\,\G_n\bigg]\bigg),\\
	M_{n+1}^{(2)}&:=M_n^{(2)}+\frac{1}{n+1}\bigg(\sum_{i\in\I_n}\ind_{\{\Zm_n(i)\leq k,\Zm_{n+1}(i)> k\}}-\mathbb{E}\bigg[\sum_{i\in\I_n}\ind_{\{\Zm_n(i)\leq k,\Zm_{n+1}(i)> k\}}\,\bigg|\,\G_n\bigg]\bigg).
	\ea \ee 
	Here, we use that 
	\be \ba
	\ind_{\{\Zm_{n+1}(i)=k\}}&=\ind_{\{\Zm_{n+1}(i)=k,\Zm_n(i)\leq k\}}=\ind_{\{\Zm_{n+1}(i)\geq k,\Zm_n(i)\leq k\}}-\ind_{\{\Zm_{n+1}(i)>k,\Zm_n(i)\leq k\}}\\
	&=\ind_{\{\Zm_n(i)=k\}}+\ind_{\{\Zm_{n+1}(i)\geq k,\Zm_n(i)< k\}}-\ind_{\{\Zm_{n+1}(i)>k,\Zm_n(i)\leq k\}}.
	\ea \ee 
	We note that, as the indicators in $M_n^{(1)},M_n^{(2)}$ only differ by one index $k$, it is sufficient to prove the summability of the conditional second moment of $\Delta M_n^{(2)}$ for all fixed $k\geq 1$. So, we write
	\be \ba\label{eq:mn2bound}
	\mathbb{E}\Big[&(\Delta M_n^{(2)})^2\,\Big|\,\G_n\Big]\\
	&=\frac{1}{(n+1)^2}\mathbb{E}\bigg[\Big(\sum_{i\in\I_n}\Big(\ind_{\{\Zm_n(i)\leq k,\Zm_{n+1}(i)> k\}}-\P{\Zm_n(i)\leq k,\Zm_{n+1}(i)> k\;|\;\G_n}\Big)\Big)^2\,\bigg|\,\G_n\bigg].
	\ea\ee
	 Using the non-positive correlation of the degree increments for the PAFRO model and Lemma \ref{lemma:nqd} for the PAFFD and PAFUD models, we can bound this from above by,
	\be\ba\label{eq:Rnconvbound}
	\frac{1}{(n+1)^2}&\sum_{i\in\I_n}\mathbb{E}\Big[\Big(\ind_{\{\Zm_n(i)\leq k,\Zm_{n+1}(i)> k\}}-\P{\Zm_n(i)\leq k,\Zm_{n+1}(i)> k\;|\;\G_n}\Big)^2\,\Big|\,\G_n\Big]\\
	&\leq \frac{1}{(n+1)^2}\sum_{i\in\I_n}\ind_{\{\Zm_n(i)\leq k\}}\P{\Delta \Zm_n(i)\geq 1\;|\;\G_n}\\
	&\leq  \frac{1}{(n+1)^2}\sum_{i=1}^n \E{\Delta \Zm_n(i)\;|\;\G_n}= \frac{m}{(n+1)^2},
	\ea\ee 
	where we use Markov's inequality in the final step and use that the increments of all in-degrees is exactly $m$ by the definition of the PAFFD and PAFUD models. Hence, the final statement is summable almost surely, which proves the almost sure convergence of $ R_n$. For the PAFRO model, we use the same steps as in \eqref{eq:mn2bound} and \eqref{eq:Rnconvbound}, but take the expected value on the left- and right-hand-side. Then, using the definition of the PAFRO model, we arrive at 
	\be \label{eq:2ndmomentbound}
	\mathbb{E}\big[(\Delta M_n^{(2)})^2\big]\leq \frac{1}{(n+1)^2}\sum_{i=1}^n \E{\Delta \zni}\leq \frac{1}{(n+1)^2}\sum_{i=1}^n\frac{\E{\zni+\F_i}}{m_0+(n-n_0)}.
	\ee 
	By using the tower rule and conditioning on $\G_{n-1}$, we find
	\be 
	\E{\zni+\F_i}=\E{\E{\zni+\F_i\,|\,\G_{n-1}}}\leq\E{\Zm_{n-1}(i)+\F_i}\Big(1+\frac{1}{m_0+(n-1-n_0)}\Big).
	\ee 
	Continuing this recursion yields
	\be \ba
	\E{\zni+\F_i}&\leq \mathbb{E}[\Zm_{i\vee n_0}(i)+\F_i]\prod_{j=i\vee n_0}^{n-1}\Big(1+\frac{1}{m_0+(j-n_0)}\Big)\leq \frac{(m_0+\E{\F})(m_0+(n-n_0))}{m_0+(i\vee n_0 - n_0)}.
	\ea\ee 
	Using this upper bound in \eqref{eq:2ndmomentbound}, we obtain
	\be \label{eq:pafro2ndmoment}
	\mathbb{E}\big[(\Delta M_n^{(2)})^2\big]\leq\leq \frac{1}{(n+1)^2}\Big(C_1+\sum_{i=n_0+1}^n \frac{m_0+\E{\F}}{m_0+(i-n_0)}\Big)\leq \frac{C_1+C_2\log n}{(n+1)^2},
	\ee 
	for some constants $C_1,C_2>0$, which is indeed summable.
	
	For $k=0$, we can write $\Delta R_n$ as
	\be 
	\Delta R_n:=\Delta M_{n}^{(1)}+\Delta M_{n}^{(2)} +(\ind_{\{\F_{n+1}\in(f,f']\}}-\mu((f,f']))/(n+1),
	\ee 
	where $\Delta M_n^{(1)}=0$ and $\Delta M_n^{(2)}$ is as in \eqref{eq:mns} with $k=0$. We already proved the summability of the second conditional moment of $M_n^{(2)}$ which follows for $k=0$ as well, and the last term has a second conditional moment bounded by $\mu((f,f'])/(n+1)^2$, which is summable too. This proves the almost sure convergence of $R_n$.\\ 
	
	For $(ii)$, we have
	\be 
	\Delta R_n=\frac{1}{n+1}\sum_{i\in\I_n}(\Zm_{n+1}(i)-\E{\Zm_{n+1}(i)\;|\;\G_n})=\frac{1}{n+1}\sum_{i\in\I_n}(\Delta\Zm_{n}(i)-\E{\Delta\Zm_{n}(i)\;|\;\G_n}),
	\ee 
	as $\Zm_{n+1}(i)=\zni+\Delta \zni$. We now bound the conditional second moments of $\Delta R_n$ by
	\be \ba \label{eq:varbound}
	\E{\Delta R_n^2\,|\,\G_n}&=\frac{1}{(n+1)^2}\mathbb{E}\Big[\Big(\sum_{i\in\I_n}(\Delta\Zm_{n}(i)-\E{\Delta\Zm_{n}(i)\; |\;\G_n})\Big)^2\;\Big|\;\G_n\Big]\\
	&\leq \frac{1}{(n+1)^2}\sum_{i\in\I_n}\Var(\Delta \Zm_n(i)\;|\;\G_n).
	\ea\ee
	The second line follows from Lemma \ref{lemma:nqd} for the PAFFD and PAFUD models and from the conditional non-positive correlation of the $\zni$ for the PAFRO model. Then, for the PAFUD and PAFFD models, we use that $\Delta \zni$ is a sum of $m$ indicator random variables and hence that its variance can be bounded by a $m$ times its mean. Also noting that the sum of all the increments of the in-degrees equals $m$, we obtain the upper bound $(m/(n+1))^2$, which is summable almost surely. For the PAFRO model, we again take the expected value on both sides of \eqref{eq:varbound} to get rid of the conditional statement. Then, as the variance of $\Delta \zni$ is bounded by its mean for the PAFRO model, and the same approach as used in \eqref{eq:2ndmomentbound} through \eqref{eq:pafro2ndmoment} works here as well to arrive at a summable upper bound.	
\end{proof}

With these lemmas at hand, we can prove Theorem \ref{Thrm:degree}:

\begin{proof}[Proof of Theorem \ref{Thrm:degree}]
	We provide a proof for the PAFFD and PAFUD models, the proof for the PAFRO model follows by setting $m=1$; the additional required adjustments are all included in the proof of Lemma \ref{lemma:rnconv}.\\ 
	
	First, we show that $\Gamma_n$ converges in the weak$^*$ topology to $\Gamma$, defined in \eqref{eq:gamma}. To this end, we let $0\leq f<f'<\infty$, and set
	\be \label{eq:in}
	\I_n:=\{i\in[n]\;|\; \F_i\in(f,f']\},\quad X_n:=\frac{1}{n}\sum_{i\in\I_n}\Zm_n(i)=\Gamma_n((f,f']).
	\ee 
	We develop a recursion for $X_{n+1}-X_n$. By writing $\Zm_{n+1}(i)=\zni+\Delta \zni$ and $\Fb:=(m_0+m(n-n_0)+S_n)/n$, we find
	\be 
	\E{X_{n+1}\;|\;\G_n}=\frac{1}{n+1}\Big(\sum_{i\in \I_n}\E{\Zm_{n+1}(i)\;|\;\G_n}\Big)=X_n+\frac{1}{n+1}\Big(\sum_{i\in\I_n}\frac{\Zm_n(i)+\F_i}{n\Fb/m}-X_n\Big),
	\ee 
	where we note that this holds for both the PAFFD as well as the PAFUD model. Then,
	\be 
	X_{n+1}-X_n=\frac{1}{n+1}\Big(\sum_{i\in\I_n}\frac{\Zm_n(i)+\F_i}{n\Fb/m}-X_n\Big)+\Delta R_n,
	\ee 
	with $\Delta R_n:=X_{n+1}-\E{X_{n+1}\;|\;\G_n}$. It is now possible to write the following two bounds:
	\be \ba
	X_{n+1}-X_n&\geq \frac{1}{n+1}\Big(-\Big(1-\frac{m}{\Fb}\Big)X_n+\frac{|\I_n|}{n}\frac{mf}{\Fb}\Big)+\Delta R_n,\\
	X_{n+1}-X_n&\leq \frac{1}{n+1}\Big(-\Big(1-\frac{m}{\Fb}\Big)X_n+\frac{|\I_n|}{n}\frac{mf'}{\Fb}\Big)+\Delta R_n.
	\ea \ee 
	We note that, by the strong law of large numbers, $|\I_n|/n$ converges almost surely to $\mu((f,f'])$ and $\Fb$ converges almost surely to $m\theta_m$, where we recall that $\theta_m=1+\E{\F}/m$. From Lemma \ref{lemma:rnconv} it follows that $R_n:=\sum_{k=n_0}^{n}\Delta R_n$ converges almost surely, so it follows from Lemma \ref{lemma:stochapprox} that almost surely
	\be \ba\label{eq:Xnbounds}
	\liminf_{n\rightarrow \infty}X_n&\geq \frac{f}{\theta_m-1}\mu((f,f']),\qquad
	\limsup_{n\rightarrow \infty}X_n&\leq \frac{f'}{\theta_m-1}\mu((f,f']).
	\ea\ee 
	We now take a countable subset $ \mathbb{F}\subset [0,\infty)$ that is dense, such that for each $f\in\mathbb{F}$, $\mu(\{f\})=0$. As $\mathbb{F}$ is countable, there exists an almost sure event $\Omega_0$ on which both statements in \eqref{eq:Xnbounds} hold for any pair $f,f'\in\mathbb{F}$ such that $f<f'$. Take an arbitrary open set $U$, and approximate $U$ from below by a sequence of sets $(U_m)_{m\in\N}$, where each $U_m$ is a finite union of small disjoint intervals $(f,f']$, with $f,f'\in\mathbb{F}$. Then, for any $m\in\N$, applying a Riemann approximation to \eqref{eq:Xnbounds},
	\be \label{eq:Gammabounds}
	\liminf_{n\rightarrow \infty} \Gamma_n(U)\geq \liminf_{n\rightarrow \infty} \Gamma_n(U_m)\geq \Gamma(U_m)\text{ on $\Omega_0$.}
	\ee 
	Hence, by the monotone convergence theorem, it follows that $\liminf_{n\rightarrow \infty}\Gamma_n(U)\geq \Gamma(U)$. Likewise, for any closed set $C$, a similar argument shows that $\limsup_{n\rightarrow \infty}\Gamma_n(C)\leq \Gamma(C)$. It hence follows from the Portmanteau lemma \cite[Theorem 13.16]{KleAch13} that $\Gamma_n$ converges to $\Gamma$ a.s.\ in the weak$^*$ topology.\\ 
	
	The approach to prove the other two parts in \eqref{eq:gammaconv} is to apply induction on $k$ to the convergence of the measures $\Gamnk$ (and thus $p_n(k)$). We prove the statements in \eqref{eq:gammaconv} hold for $k=0$, the initialisation of the induction, below, and show the induction step first. Let us assume that the last two statements in \eqref{eq:gammaconv} hold for all $0\leq i<k$, for some $k\geq 1$. We now advance the induction hypothesis.

	Let us take $0\leq f<f'<\infty$, and define $X_n:=\Gamnk((f,f'])$. Then, we can write the following recurrence relation, using $\I_n$ as in \eqref{eq:in}:
	\be\ba \label{eq:stochaprox}
	\E{X_{n+1}\, \big|\, \G_n} ={} &\frac{1}{n+1}\sum_{i=1}^{n+1}\P{\Zm_{n+1}(i)=k, \F_i\in(f,f']\ \big|\ \G_n}\\
	= {}& \frac{1}{n+1}\sum_{i\in\I_n}\sum_{\ell=0}^k \ind_{\{\Zm_n(i)=\ell\}}\P{\Delta \Zm_n(i)=k-\ell\ \big|\ \G_n}\\
	= {}& \frac{1}{n+1}\Big(\sum_{i\in\I_n}\sum_{\ell=0}^{k-1} \ind_{\{\Zm_n(i)=\ell\}}\P{\Delta \Zm_n(i)=k-\ell\ \big|\ \G_n}\\
	&+\sum_{i\in\I_n}\ind_{\{\Zm_n(i)=k\}}\left(1-\P{\Delta \Zm_n(i)\geq 1\ \big|\ \G_n}\right)\Big),
	\ea\ee
	where in the second step we note that $\Zm_{n+1}(n+1)=0<k$ by definition and where we isolated the $\Zm_n(i)=k$ case in the last step. We do this, as this will prove to be the only part that does not converge to zero almost surely. We can then write
	\be \ba\label{eq:Xnrecursion}
	\E{X_{n+1}\, \big|\, \G_n}= {}& X_n+\frac{1}{n+1}\bigg(\sum_{i\in\I_n}\sum_{\ell=0}^{k-1} \ind_{\{\Zm_n(i)=\ell\}}\P{\Delta \Zm_n(i)=k-\ell\ \big|\ \G_n}\\
	& -\sum_{i\in\I_n} \ind_{\{\Zm_n(i)=k\}}\P{\Delta \Zm_n(i)\geq 1\ \big|\ \G_n}-X_n\bigg)\\
	={}& X_n+\frac{1}{n+1}\bigg(\sum_{i\in\I_n}\sum_{\ell=0}^{k-1} \ind_{\{\Zm_n(i)=\ell\}}\P{\Delta \Zm_n(i)=k-\ell\ \big|\ \G_n}\\
	& -\sum_{i\in\I_n} \ind_{\{\Zm_n(i)=k\}}\left(\P{\Delta \Zm_n(i)\geq 1\ \big|\ \G_n}-\frac{k+\F_i}{n\Fb/m}\right) \\
	&+\sum_{i\in\I_n}\ind_{\{\Zm_n(i)=k\}}\left(\frac{f'-\F_i}{n\Fb/m}\right)-\left(1+\frac{k+f'}{\Fb/m}\right)X_n\bigg).
	\ea\ee 
	We can therefore write, using that $f'-\F_i\geq 0$ holds almost surely for all $i\in\I_n$, 
	\be\label{eq:lowerboundstochapprox}
	X_{n+1}-X_n\geq \frac{1}{n+1}\left(A_n-B_n X_n\right)+R_{n+1}-R_n,
	\ee
	where
	\be \ba\label{eq:anbnrn}
	A_n:={}&\sum_{i\in\I_n}\sum_{\ell=0}^{k-1} \ind_{\{\Zm_n(i)=\ell\}}\P{\Delta \Zm_n(i)=k-\ell\, \big|\, \G_n}\\
	&-\sum_{i\in\I_n} \ind_{\{\Zm_n(i)=k\}}\left(\P{\Delta \Zm_n(i)\geq 1\, \big|\, \G_n}-\frac{k+\F_i}{n\Fb/m}\right),\\
	B_n:={}&1+\frac{k+f'}{\Fb/m},\\
	\Delta R_n:={}&R_{n+1}-R_n=X_{n+1}-\E{X_{n+1}\,|\,\G_n}.
	\ea\ee 
	We now prove the convergence of all three terms. First, we prove the convergence of $A_n$ to 
	\be\label{eq:A}
	A:=\frac{1}{\theta_m}\int_{(f,f']}(k-1+x)\ \Gamma^{(k-1)}(\d x).
	\ee 
	We note that, by the induction hypothesis, almost surely,
	\be \label{eq:gamconv}
	\lim_{n\rightarrow\infty}\Big|\int_{(f,f']}(k-1+x)\ \Gamma^{(k-1)}(\d x)-\int_{(f,f']}(k-1+x)\ \Gamma^{(k-1)}_n(\d x)\Big|=0.
	\ee 
	We now deal with the two terms in $A_n$ separately. We start with the second term. By the definition of the PAFFD and PAFUD models in Definition \ref{def:paf}, it follows that for both models,
 	\be 
 	\P{\Delta \zni \geq 1\,|\,\G_n}\leq 1-\Big(1-\frac{\zni+\F_i}{n\Fb}\Big)^m=\sum_{\ell=1}^m {m\choose \ell}(-1)^{\ell+1} \Big(\frac{\zni+\F_i}{n\Fb}\Big)^\ell.
 	\ee 
 	Using this in the second term of $A_n$ in \eqref{eq:anbnrn}, we obtain
 	\be \label{eq:probbigger1}
 	\sum_{i\in\I_n}\ind_{\{\zni=k\}}\sum_{\ell=2}^m {m\choose \ell}(-1)^{\ell+1} \Big(\frac{k+\F_i}{n\Fb}\Big)^\ell\leq C_m\sum_{\ell=2}^m n^{1-\ell} \Big(\frac{k+f'}{\Fb}\Big)^\ell,  
 	\ee 
 	where $C_m>0$ is a constant. We note that this expression tends to zero almost surely as $n$ tends to infinity, and that a similar lower bound that tends to zero almost surely can be constructed as well. For the first term, we write,
	\be \ba\label{eq:Anconv}
	\lim_{n\rightarrow \infty}\Big|\sum_{i\in\I_n}&\sum_{\ell=0}^{k-1} \ind_{\{\Zm_n(i)=\ell\}}\P{\Delta \Zm_n(i)=k-\ell\, \big|\, \G_n}-\frac{1}{\theta_m}\int_{(f,f']}(k-1+x)\ \Gamma^{(k-1)}(\d x)\Big|\\
	\leq{}& \lim_{n\rightarrow \infty}\bigg[\ \Big|\frac{1}{\theta_m}-\frac{1}{\Fb/m}\Big|\int_{(f,f']}(k-1+x)\ \Gamma^{(k-1)}(\d x) \\
	&+\frac{1}{\Fb/m}\Big|\int_{(f,f']}(k-1+x)\ \Gamma^{(k-1)}(\d x)-\int_{(f,f']}(k-1+x)\ \Gamma^{(k-1)}_n(\d x)\Big|\\
	&+ \Big|\sum_{i\in\I_n}\ind_{\{\Zm_n(i)=k-1\}}\P{\Delta \Zm_n(i)=1\, \big|\, \G_n}-\frac{1}{\Fb/m}\int_{(f,f']}(k-1+x)\ \Gamma^{(k-1)}_n(\d x)\Big|\\
	&+\sum_{i\in\I_n}\sum_{\ell=0}^{k-2}\ind_{\{\Zm_n(i)=\ell\}}\P{\Delta \Zm_n(i)\geq 2\, \big|\, \G_n} \; \bigg].
	\ea\ee 
	The first line converges to zero almost surely by the strong law of large numbers. By the induction hypothesis as used in \eqref{eq:gamconv}, the second line converges to zero almost surely and by a similar argument as in \eqref{eq:probbigger1} the last line converges to zero almost surely. For the third line, we use the definition of $\Gamma^{(k-1)}_n$, as defined in \eqref{eq:gammas}, to find
	\be \ba
	\sum_{i\in\I_n}&\ind_{\{\Zm_n(i)=k-1\}}\P{\Delta \Zm_n(i)=1\, \big|\, \G_n}-\frac{1}{\Fb/m}\int_{(f,f']}(k-1+x)\ \Gamma^{(k-1)}_n(\d x)\\
	&=\sum_{i\in\I_n}\ind_{\{\Zm_n(i)=k-1\}}\Big(\P{\Delta \Zm_n(i)=1\, \big|\, \G_n}-\frac{k-1+\F_i}{n\Fb/m}\Big),
	\ea\ee 
	and so, again using similar steps as in \eqref{eq:probbigger1}, the third line in \eqref{eq:Anconv} converges to zero almost surely, which finishes the proof of the almost sure convergence of $A_n$ to $A$, as in \eqref{eq:A}. Now, for $B_n$ we immediately conclude that
	\be 
	\lim_{n\rightarrow \infty}B_n=1+\frac{k+f'}{\theta_m}=:B,
	\ee 
	almost surely. Finally, the almost sure convergence of $R_n$ again follows from Lemma \ref{lemma:rnconv}. We thus obtain from Lemma \ref{lemma:stochapprox},
	\be\label{eq:Xnlowerbound}
	\liminf_{n\rightarrow \infty}X_n\geq \frac{A}{B}=\frac{1}{k+f'+\theta_m}\int_{(f,f']}k-1+x\ \Gamma^{(k-1)}(\d x).
	\ee
	Likewise, the upper bound
	\be\label{eq:Xnupperbound}
	\limsup_{n\rightarrow\infty}X_n\leq \frac{1}{k+f+\theta_m}\int_{(f,f']}k-1+x\ \Gamma^{(k-1)}(\d x)
	\ee
	can be established from \eqref{eq:Xnrecursion}, too, when we replace the $f'$ by $f$ in \eqref{eq:Xnrecursion} and note that $f-\F_i\leq 0$ holds almost surely for all $i\in\I_n$. 

	We now again take a countable subset $ \mathbb{F}\subset [0,\infty)$ that is dense, such that for each $f\in\mathbb{F}$, $\mu(\{f\})=0$. As $\mathbb{F}$ is countable, there exists an almost sure event $\Omega_0$ on which both \eqref{eq:Xnlowerbound} and \eqref{eq:Xnupperbound} hold for any pair $f,f'\in\mathbb{F}$ such that $f<f'$. A similar argument as in \eqref{eq:Xnbounds} and \eqref{eq:Gammabounds} can be made, using Riemann approximations and the Portmanteau lemma, which yields for any open set $ U\subseteq[0,\infty)$ and any closed set $ C\subseteq[0,\infty)$,
	\be \ba\label{eq:Uliminf}
	\liminf_{n\rightarrow \infty}\Gamnk(U)&\geq \int_U\frac{k-1+x}{k+x+\theta_m} \Gamma^{(k-1)}(\d x),\\
	\limsup_{n\rightarrow \infty}\Gamnk(C)&\leq \int_C\frac{k-1+x}{k+x+\theta_m} \Gamma^{(k-1)}(\d x),
	\ea\ee 
	and thus $\Gamnk$ converges in the weak$^*$ topology to $\Gamma^{(k)}$, given by 
	\be 
	\Gamma^{(k)}(\d x)=\frac{(k-1)+x}{k+x+\theta_m} \Gamma^{(k-1)}(\d x)=\ldots=\prod_{\ell=1}^k \frac{(\ell-1)+x}{\ell+x+\theta_m} \Gamma^{(0)}(\d x).
	\ee 
	What remains is to perform the initialisation of the induction, regarding $\Gamma^{(0)}_n$. Analogous to the steps in \eqref{eq:stochaprox}, we now set $X_n:=\Gamma^{(0)}_n((f,f'])$, with $0\leq f<f'<\infty$, to obtain
	\be \ba 
	\E{X_{n+1}\;|\;\G_n}&=\frac{1}{n+1}\bigg(\sum_{i\in\I_n} \P{\Zm_{n+1}(i)=0\;|\;\G_n}+\P{\F_{n+1}\in(f,f']}\bigg)\\
	&=\frac{1}{n+1}\bigg(\sum_{i\in\I_n}\ind_{\{\Zm_n(i)=0\}}\P{\Delta \Zm_n(i)=0\;|\;\G_n}+\mu((f,f'])\bigg)\\
	&=X_n+\frac{1}{n+1}\bigg(-\sum_{i\in\I_n} \ind_{\{\Zm_n(i)=0\}}\P{\Delta \Zm_n(i)\geq 1\;|\;\G_n}-X_n+\mu((f,f'])\bigg).
	\ea \ee 
	Similar to \eqref{eq:Xnrecursion}, \eqref{eq:lowerboundstochapprox} and \eqref{eq:anbnrn}, we find
	\be \label{eq:Xnupperbound2}
	X_{n+1}-X_n\geq \frac{1}{n+1}(A_n-B_nX_n)+\Delta R_n,
	\ee 
	where $A_n\rightarrow \mu((f,f'])$, $B_n\rightarrow (f'+\theta_m)/\theta_m$ a.s. as $n\to\infty$, and $\Delta R_n=R_{n+1}-R_n:=X_{n+1}-\E{X_{n+1}\,|\,\G_n}$. As before, the almost sure convergence of $R_n$ follows from Lemma \ref{lemma:rnconv}. Analogously to \eqref{eq:Xnupperbound2}, 
	\be  
	X_{n+1}-X_n\leq \frac{1}{n+1}(A_n-B_n'X_n)+\Delta R_n
	\ee 
	holds, with $B_n'\rightarrow (1+f+\theta_m)/\theta_m$ almost surely. Hence, using Lemma \ref{lemma:stochapprox},
	\be \ba
	\liminf_{n\rightarrow \infty} X_n\geq  \frac{\theta_m}{f'+\theta_m}\mu((f,f']),\qquad
	\limsup_{n\rightarrow \infty} X_n \leq \frac{\theta_m}{f+\theta_m}\mu((f,f']),
	\ea\ee 
	and thus, with a similar reasoning as in \eqref{eq:Uliminf}, almost surely $\Gamma^{(0)}_n$ converges weakly in the weak$^*$ topology to
	\be
	\Gamma^{(0)}(dx):=\frac{\theta_m}{x+\theta_m}\mu(\d x),
	\ee
	which yields
	\be
	\Gamma^{(k)}(dx)=\frac{\theta_m}{x+\theta_m}\prod_{\ell=1}^k \frac{(\ell-1)+x}{\ell+x+\theta_m}\mu(\d x).
	\ee
	Then,
	\be
	p(k):=\lim_{n\rightarrow\infty}p_n(k)=\int_0^\infty\frac{\theta_m}{x+\theta_m}\prod_{\ell=1}^k \frac{(\ell-1)+x}{\ell+x+\theta_m}\mu(\d x),
	\ee
	which proves \eqref{eq:gammaconv} and concludes the proof.
	\end{proof}

We now prove Theorem \ref{Thrm:pkasymp}:

\begin{proof}[Proof of Theorem \ref{Thrm:pkasymp}]
	We start by proving $(i)$. The integrand of the integral in \eqref{eq:pk} can be written as
	\be 
	\frac{\theta_m}{x+\theta_m}\prod_{\ell=1}^k \frac{(\ell-1)+x}{\ell+x+\theta_m}=\theta_m \frac{\Gamma(x+\theta_m)}{\Gamma(k+x+1+\theta_m)}\frac{\Gamma(k+x)}{\Gamma(x)}.
	\ee 
	From \cite[Theorem 1]{Jam13} it follows that $k^{1+\theta_m}\Gamma(k+x)/\Gamma(k+x+1+\theta_m)\leq 1$ for all $x,k \geq 0$. By also using that $\Gamma(t+a)/\Gamma(t)=t^a(1+\mathcal{O}(1/t))$ as $t\rightarrow\infty$ and $a$ fixed, we find that the dominated convergence theorem yields
	\be 
	\lim_{k\to\infty}p(k)k^{1+\theta_m}=\int_0^\infty \theta_m\frac{\Gamma(x+\theta_m)}{\Gamma(x)}\mu(\d x),
	\ee
	which is finite since $\mathbb{E}[\F^{\theta_m}]<\infty$. We now prove $(ii)$, so the fitness distribution satisfies Assumption \ref{Ass:powerlaw}. First, let $\alpha\in(2,1+\theta_m)$. We write the integral in \eqref{eq:pk} as two separate integrals by splitting the domain into $(0,k)$ and $(k,\infty)$. We first concentrate on an upper bound. We note that, by symmetry, it also follows that $x^{1+\theta_m}\Gamma(k+x)/\Gamma(k+x+1+\theta_m)\leq 1$. Hence, we obtain the upper bound
	\be \label{eq:powerlawsplit1}
	k^{-(1+\theta_m)}\int_{0}^k\theta_m \frac{\Gamma(x+\theta_m)}{\Gamma(x)x^{\theta}}x^{\theta}\mu(\d x)+\int_{k}^\infty \theta_m \frac{\Gamma(x+\theta_m)}{\Gamma(x)x^{\theta_m}}x^{-1}\mu(\d x).
	\ee 
	We note that there exists a constant $c>1$ such that $\Gamma(x+\theta_m)/(\Gamma(x)x^{\theta_m})\in [1,c]$ when $x\geq 1$. Hence, using Assumption \ref{Ass:powerlaw}, we can bound \eqref{eq:powerlawsplit1} from above by
	\be \ba\label{eq:pkasymp}
	\theta_mk^{-(1+\theta_m)}&\int_0^{1}\frac{\Gamma(x+\theta_m)}{\Gamma(x)}\mu(\d x)+c\theta_mk^{-(1+\theta_m)}\int_{1}^k x^{\theta_m}\mu(\d x)+c\theta_mk^{-1}\int_k^\infty \mu(\d x)\\
	&=o(k^{-\alpha})+c\theta_mk^{-(1+\theta_m)}\E{\F^{\theta_m}\ind_{\{1\leq \F^{\theta_m}\leq k\}}}+c\theta_m\ell(k)k^{-\alpha}\\
	&=o(k^{-\alpha})+c\theta_m^2k^{-(1+\theta_m)}\int_{1}^k x^{\theta_m-1}\ell(x)x^{-(\alpha-1)}\d x+c\theta_m\ell(k)k^{-\alpha},
	\ea\ee
	where the first term follows from the fact that $\alpha<1+\theta_m$ and that the integral from $0$ to $1$ is finite.
	Hence, by \cite[Proposition 1.5.8]{BinGolTeu87}, as $k$ tends to infinity, this is asymptotically
	\be
	\frac{c\theta_m(2\theta_m-(\alpha-1))}{\theta_m-(\alpha-1)}\ell(k)k^{-\alpha}.
	\ee
	For a lower bound, we bound the second integral in \eqref{eq:powerlawsplit1} from below by zero, and bound the first integral, using similar steps as before, from below by
	\be \label{eq:pklower}
	o(k^{-\alpha})+\theta_m^2k^{-(1+\theta_m)}\int_{1}^k x^{\theta_m-1}\ell(x)x^{-(\alpha-1)}\d x,
	\ee 
	which is asymptotically, as $k$ tends to infinity, $(\theta_m^2/(\theta_m-(\alpha-1))\ell(k)k^{-\alpha}$. Finally, for $\alpha=1+\theta_m$, we note that the first term of \eqref{eq:pkasymp} is no longer $o(k^{-\alpha})$, but of the same order as the other terms. Furthermore, since the argument of the integral in the last line of \eqref{eq:pkasymp} (as well as in \eqref{eq:pklower}) now equals $\ell(x)/x$, the integral equals $\ell^\star(k)$ and it follows from \cite[Proposition 1.5.9a]{BinGolTeu87} that either $\ell^\star$ converges, in which case this falls under the first case $(i)$ as the $\theta_m^{\mathrm{th}}$ moment exists, or that $\ell^\star$ is slowly varying itself. Thus, in the latter case, we obtain an upper and lower bound with asymptotics, respectively,
	\be\ba
	\Big(\theta_m\int_0^{1}\frac{\Gamma(x+\theta_m)}{\Gamma(x)}\mu(\d x)+c\theta_m \ell(k)+c\theta_m^2\ell^\star(k)\Big)k^{-(1+\theta_m)}&=:\overline L(k)k^{-(1+\theta_m)},\\
	\Big(\theta_m\int_0^{1}\frac{\Gamma(x+\theta_m)}{\Gamma(x)}\mu(\d x)+\theta_m^2\ell^\star(k)\Big)k^{-(1+\theta_m)}&=:\underline L(k)k^{-(1+\theta_m)}.
	\ea\ee  
	We also have from \cite[Proposition 1.5.9a]{BinGolTeu87} that, in the case that $\ell^\star$ diverges as $k$ tends to infinity, $\ell^\star(k)/\ell(k)\to\infty$ as $k\to\infty$ as well, so that $\overline L(k)\sim \underline L(k)\sim \ell^\star(k)$ as $k\to \infty$, which finishes the proof of $(ii)$.
	
	Finally, we tend to $(iii)$. We provide a proof for the PAFFD and PAFUD models with $m\geq 1$ first, and then show how the results follows for the PAFRO model as well.
	
	Recall that $U_n$ is a uniformly chosen vertex from $[n]$. We first condition on the size of the fitness of $U_n$. Let $0<\beta <((2-\alpha)/(\alpha-1)\wedge 1)$. Note that when $U_n>n_0$, $E_n$ denotes the event that $\Zm_n(U_n)=0$. Then, 
	\be\label{eq:enbound}
	\P{E_n}\geq \P{E_n\cap\{ \F_{U_n}\leq n^\beta\}}=\P{\F_{U_n}\leq n^\beta}-\P{E_n^c\cap\{ \F_{U_n}\leq n^\beta\}}.
	\ee 
	Clearly, for $\eps>0$ fixed and $n$ large,
	\be \label{eq:FUn}
	\P{\F_{U_n}\leq n^\beta}=\P{\F\leq n^\beta}=1-\ell(n^\beta)n^{-(\alpha-1)\beta}\geq 1-n^{-(\alpha-1)\beta+\eps},
	\ee
	where we use Potter's theorem \cite[Theorem 1.5.6]{BinGolTeu87}, which states that for any fixed $\eps>0$ and any function $\ell$, slowly-varying at infinity,
	\be\label{eq:potter}
	\lim_{x\rightarrow \infty}\ell(x)x^\eps=\infty,\qquad 	\lim_{x\rightarrow \infty}\ell(x)x^{-\eps}=0.
	\ee
	For the second probability on the right-hand-side of \eqref{eq:enbound}, we write
	\be\ba 
	\P{E_n^c\cap\{ \F_{U_n}\leq n^\beta\}}&=\mathbb{P}\Big(\bigcup_{j=U_n\vee n_0}^{n-1}\{\Delta \Zm_j(U_n)\geq 1\}\cap\{ \F_{U_n}\leq n^\beta\}\Big) \\
	& = \sum_{k=1}^n \frac{1}{n} \mathbb{P}\Big(\bigcup_{j=k\vee n_0}^{n-1}\{\Delta \Zm_j(k)\geq 1\}\cap\{ \F_{k}\leq n^\beta\}\Big) \\
	& \leq \sum_{k=1}^n \sum_{j=k\vee n_0}^{n-1} \frac{1}{n}\P{\{\Delta \Zm_j(k)\geq 1\}\cap\{ \F_{k}\leq n^\beta\}}.
	\ea\ee
	Now, using Markov's inequality, applying the tower rule and switching the summations yields the upper bound, writing $\Fb=(m_0+m(n-n_0)+S_n)/n$,
	\be \ba\label{eq:expMibound}
	\frac{1}{n} \sum_{j=n_0}^{n-1}& \sum_{k=1}^{j}\E{ (\Zm_{j}(k)+n^\beta)/(j\bar \F_j)\ind_{\{\F_k\leq n^\beta\}}}\\
	={}& \frac{1}{n}\sum_{j=n_0}^{n-1}\sum_{k=1}^j\Big(\E{\Zm_j(k)/(j\bar \F_j)\ind_{\{\F_k\leq n^\beta\}} }+n^{\beta} \E{(j\bar \F_j)^{-1}\ind_{\{\F_k\leq n^\beta\}}}\Big)\\
	\leq{}& \frac{1}{n}\sum_{j=n_0}^{n-1}\sum_{k=1}^j \Big( \E{\Zm_j(k)/(m_0+M_j)}+n^{\beta}\E{(m_0+M_j)^{-1}}\Big),
	\ea \ee
	where $M_j:=\max_{k\leq j}\F_k$, we bound $j\bar\F_j$ from below by $m_0+M_j$ and we bound the indicator variables from above by $1$. We now bound the first moment from above. Note that, for the PAFFD and PAFUD models, 
	\be \label{eq:sumznexp1}
	\sum_{k=1}^j\E{ \Zm_j(k)}=m_0+m(j-n_0),
	\ee
	since every vertex $i>n_0$ has out-degree $m$. Hence, combining \eqref{eq:expMibound} and \eqref{eq:sumznexp1}, we obtain the upper bound, by using the tower rule and conditioning on the fitness,
	\be\ba \label{eq:encfinalbound}
	\frac{1}{n}\sum_{j=n_0}^{n-1}(m+ m_0+n^{\beta})j\E{1/(m_0+M_j)}\leq Cn^{\beta-1}\sum_{j=n_0}^{n-1}j\E{1/(m_0+M_j)},
	\ea \ee
	when $n$ is sufficiently large, for some constant $C>0$. We now bound $\E{1/(m_0+M_j)}$ from above.
	\be \ba\label{eq:Mibound}
	\E{1/(m_0+M_j)}={}&\E{1/(m_0+M_j)\ind_{\{ M_j\leq j^{1/(\alpha-1)-\eps}\}}}+\E{1/(m_0+M_j)\ind_{\{M_j\geq j^{1/(\alpha-1)-\eps}\}}}\\
	\leq{}& \P{M_j\leq j^{1/(\alpha-1)-\eps}}+j^{-1/(\alpha-1)+\eps}
	\ea \ee 
	where we bound $M_j$ from below by zero and $j^{1/(\alpha-1)-\eps}$ in the first and second expectation, respectively. Then, using $1-x\leq \mathrm{e}^{-x}$, for $j$ large,
	\be \ba  \label{eq:Miprobbound}
	\P{M_j\leq j^{1/(\alpha-1)-\eps}}\leq \exp\{-\ell(j^{1/(\alpha-1)-\eps})j^{(\alpha-1)\eps}\}\leq \exp\{-j^{(\alpha-1)\eps/2}\},
	\ea \ee 
	where we use Potter's theorem, as in \eqref{eq:potter}, in the last step. By combining \eqref{eq:Mibound} and \eqref{eq:Miprobbound}, it follows that for $j$ sufficiently large (say $j> j_0$ for some $j_0\in\N$),
	\be 
	\E{1/(m_0+M_j)}\leq 2j^{-1/(\alpha-1)+\eps},
	\ee 
	and $\E{1/(m_0+M_j)}\leq 1$ for $j\leq j_0$. Using this in \eqref{eq:encfinalbound} yields
	\be \ba\label{eq:boundenc}
	\P{E_n^c\cap \{  \F_{U_n}\leq n^\beta\}}&\leq Cj_0n^{\beta-1}+4Cn^{\beta-1}\sum_{j=j_0+1}^{n-1} j^{1-1/(\alpha-1)+\eps}\leq \wt C n^{\beta+((1-1/(\alpha-1))\vee -1)+\eps}\\
	&=\wt C n^{\beta-((2-\alpha)/(\alpha-1)\wedge 1)+\eps},
	\ea \ee 
	which, by the definition of $\beta$ and the fact that $\eps$ is arbitrarily small, tends to zero as $n$ tends to infinity. Finally, we combine \eqref{eq:boundenc} and \eqref{eq:FUn} in \eqref{eq:enbound} to find
	\be \label{eq:Enfinal}
	\P{E_n}\geq 1- n^{-(\alpha-1)\beta+\eps}-\wt C n^{\beta-((2-\alpha)/(\alpha-1)\wedge 1)+\eps}.
	\ee 
	We now finish the proof of Theorem \ref{Thrm:degree} by choosing the optimal value of $\beta\in(0,((2-\alpha)/(1-\alpha)\wedge 1))$, namely $\beta=(2-\alpha)/(\alpha(\alpha-1))\wedge (1/\alpha)$, and setting $C=1+\wt C$.\\
	
	For the PAFRO model, set $m$ to equal $1$. Then, there is one adjustment required. Namely, the equality in \eqref{eq:sumznexp1} does not hold. Rather, using \eqref{eq:pafro2ndmoment} yields the upper bound 
	\be
	\sum_{k=1}^j \Ef{}{\Zm_j(k)}\leq Cj(\log j -1)\leq Cj^{1+\eps},
	\ee 
	for some large constant $C>0$. This adds at most an extra $\eps$ in the exponent of the final expression in \eqref{eq:Enfinal} and since $\eps$ is arbitrarily small, the result still holds, which concludes the proof.
\end{proof}

\section{Convergence of point process functionals}\label{sec:infmean}

As mentioned in the proof overview in Section~\ref{sec:overview}, 
in this section, we complete an important step in the proof of Theorem~\ref{Thrm:maxdegree} and show convergence of a functional of a point process as defined in \eqref{eq:functional} in the extreme disorder case ($\alpha \in (1,2)$). At the same time, we take the chance to discuss some of the required theory of point process convergence, which will also be useful in the next section when we consider the strong disorder case. A good reference for this theory is the book~\cite{Res13}.

Recall $u_n$ from Theorem \ref{Thrm:maxdegree} and let $M_p(E)$ be the space of point measures (point processes) on $E:=(0,1)\times (0,\infty)$. Let us define the point process 
\be\label{eq:pin}
\Pi_n:=\sum_{i=1}^n \delta_{(i/n,\F_i/u_n)},
\ee 
with $\delta$ a Dirac measure. It follows from \cite[Corollary 4.19]{Res13} that, when the fitness distribution satisfies Assumption \ref{Ass:powerlaw} for any $\alpha>1$, $\Pi_n$ has a weak limit $\Pi$, which is a Poisson point process (PPP) on $E$ with intensity measure $\nu(\d t, \d x):=\d t\times (\alpha-1)x^{-\alpha}\d x$.  \cite[Proposition 4.20]{Res13} shows that an almost surely continuous functional $T_1$ applied to $\Pi_n$ converges in distribution to $T_1$ applied to $\Pi$ by the continuous mapping theorem. In this section, we prove a similar result, though a slightly different approach is required.

Let $\eps,\delta>0,E_\delta:=(0,1)\times(\delta,\infty)$. For a point measure $\Pi\in M_p(E)$, define
\be \label{eq:Top}
T^\eps(\Pi):=\int_\eps^1 \Big(\int_{E}f\ind_{\{t\leq s\}}\d \Pi(t,f)\Big)^{-1}\d s, \qquad T^\eps_\delta(\Pi):=\int_\eps^1 \Big(\int_{E_\delta}f\ind_{\{t\leq s\}}\d \Pi(t,f)\Big)^{-1}\d s,
\ee 
whenever these are well-defined. That is, when $\Pi((0,s)\times (0,\infty))>0$ for all $s\in(\eps,1)$ and when $\Pi((0,s)\times(\delta,\infty))>0$ for all $s\in(\eps,1)$, respectively. The main goal in this section is to prove the following proposition:

\begin{proposition}\label{prop:infmeanmaxconv}
	Let $(\F_i)_{i\in\N}$ be i.i.d.\ copies of a random variable $\F$, which follows a power-law distribution as in Assumption \ref{Ass:powerlaw} with $\alpha\in(1,2)$. Consider the point measure $\Pi_n$ in \eqref{eq:pin}, its weak limit $\Pi$ and the functional $T^\eps$ in \eqref{eq:Top}. Then,
	\be 
	\max_{\inn}\frac{\F_i}{u_n}T^{i/n}(\Pi_n) \toindis \sup_{(t,f)\in\Pi}f T^t(\Pi).
	\ee 
\end{proposition}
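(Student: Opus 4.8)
The plan is to prove convergence of the maximum functional via the standard point-process route, but with a cutoff at fitness level $\delta$ to circumvent the fact that $T^\eps$ is not continuous on all of $M_p(E)$ (the integrand $\big(\int_E f\ind_{\{t\le s\}}\d\Pi\big)^{-1}$ blows up when there are no atoms below $s$). First I would fix $\delta>0$ and work with the truncated functional $T^\eps_\delta$, together with the truncated point process $\Pi_n^\delta := \Pi_n|_{E_\delta}$. Since $\Pi_n \Rightarrow \Pi$ in $M_p(E)$ by \cite[Corollary 4.19]{Res13}, and since the restriction map to $E_\delta$ is a.s.\ continuous at $\Pi$ (because a.s.\ $\Pi$ puts no mass on the boundary line $\{f=\delta\}$), we get $\Pi_n^\delta \Rightarrow \Pi^\delta$. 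On $E_\delta$ there are only finitely many atoms, so $\Pi \mapsto \sup_{(t,f)\in\Pi, f>\delta} f\,T^t_\delta(\Pi)$ should be a.s.\ continuous at $\Pi$: the key point is that a.s.\ $\Pi$ has an atom with $t$-coordinate arbitrarily close to $0$, so $\int_E f\ind_{\{t\le s\}}\d\Pi > 0$ for every $s\in(\eps,1)$, the integral is a.s.\ bounded away from $0$ uniformly in $s$ on compacts away from $0$, and the $s$-integral defining $T^t_\delta$ is a continuous function of the (finitely many) atom locations. Hence by the continuous mapping theorem,
\[
\max_{\inn,\, \F_i/u_n>\delta}\frac{\F_i}{u_n}T^{i/n}_\delta(\Pi_n) \toindis \sup_{(t,f)\in\Pi,\, f>\delta} f\,T^t_\delta(\Pi).
\]

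Next I would remove the two layers of truncation by showing the error is uniformly small as $\delta\to 0$. There are three comparisons to control, each in probability, uniformly in $n$:
(a) replacing $T^{i/n}_\delta(\Pi_n)$ by $T^{i/n}(\Pi_n)$ — here $T^\eps_\delta \ge T^\eps$ always, and the difference comes from the extra atoms with $f\le\delta$ in the inner integral; one bounds $0 \le T^{i/n}_\delta(\Pi_n) - T^{i/n}(\Pi_n)$ using that the inner sums differ by $\sum_{j: \F_j/u_n\le\delta,\, j\le sn} \F_j/u_n$, which after multiplying by $\F_i/u_n$ and taking the max must be shown negligible — this is essentially a statement about $\frac1n\sum_{j\le n}\F_j\ind_{\{\F_j\le \delta u_n\}}$, i.e.\ a truncated law-of-large-numbers estimate in the infinite-mean regime, controlled by $u_n \sim n^{1/(\alpha-1)}$ and Karamata;
(b) the analogous comparison on the limit side, $\sup f\,T^t_\delta(\Pi) \to \sup f\,T^t(\Pi)$ as $\delta\downarrow 0$, which follows by monotone/dominated convergence once one checks $T^t(\Pi)<\infty$ a.s.\ and the supremum is attained (or is an increasing limit);
(c) showing the maximiser's fitness is a.s.\ bounded below, so restricting to $\F_i/u_n>\delta$ does not lose the maximum — i.e.\ $\P{\sup_{(t,f)\in\Pi} f\,T^t(\Pi) = \sup_{f>\delta} f\,T^t_\delta(\Pi)} \to 1$ as $\delta\to 0$, and correspondingly for $\Pi_n$ a tightness/small-atoms bound showing $\max_{i:\F_i/u_n\le\delta} \frac{\F_i}{u_n}T^{i/n}(\Pi_n)$ is small. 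Combining (a)–(c) with a standard $\eps$-$\delta$ three-sequence argument (converging together; see e.g.\ \cite[Theorem 3.2]{Bil99}-type reasoning) yields the claim.

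I expect the main obstacle to be part (a): controlling $\max_{\inn}\frac{\F_i}{u_n}\big(T^{i/n}_\delta(\Pi_n)-T^{i/n}(\Pi_n)\big)$ uniformly in $n$. The functional $T^\eps$ is genuinely discontinuous and, worse, the inner integral $\int_E f\ind_{\{t\le s\}}\d\Pi_n(t,f) = \frac1{u_n}\sum_{j\le sn}\F_j$ can be small for $s$ close to $0$ precisely because of the heavy tails — a single large fitness among the first few vertices dominates it, but on the complementary event it can be atypically small. So one needs a careful lower bound on $\frac1{u_n}\sum_{j\le sn}\F_j$ that holds simultaneously for all $s\in(\eps,1)$ with high probability (some such estimate is presumably what forces the cutoff $\eps$ rather than $0$ as the lower limit of integration), combined with an upper bound on the contribution of the truncated-away small atoms. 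This is where the infinite-mean regularly-varying structure ($\alpha\in(1,2)$, $u_n$ regularly varying of index $1/(\alpha-1)$) must be used quantitatively, via Karamata's theorem for the truncated first moment $\E{\F\ind_{\{\F\le x\}}} \sim \tfrac{\alpha-1}{2-\alpha}\,x\,\P{\F\ge x}$ and a maximal inequality for the partial sums. The remaining parts (b) and (c) are comparatively routine once the relevant a.s.\ properties of $\Pi$ (no mass on boundaries, atoms accumulating at $t=0$ only through the $s$-integrability, finitely many atoms above any level $\delta$) are recorded.
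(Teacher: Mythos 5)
Your skeleton is the same as the paper's (truncate the functional at fitness level $\delta$, i.e.\ the paper's $T^\eps_\delta$, apply continuity plus the continuous mapping theorem, then remove the truncation through an iterated limit), and your Karamata estimate for the truncated first moment in step (a) is essentially the computation in the paper's Lemma \ref{lemma:Tdeltaconv}. The genuine gap is in how you remove the truncation, specifically in the regime of early vertices, equivalently atoms with small time coordinate, which is where the paper spends most of its effort. In your step (c) the only bound your setup supplies for $\max_{i:\F_i\leq \delta u_n}\frac{\F_i}{u_n}T^{i/n}(\Pi_n)$ is $\delta \max_{\inn}T^{i/n}(\Pi_n)$, and this is useless: $\max_{\inn}T^{i/n}(\Pi_n)=T^{1/n}(\Pi_n)\geq u_n/(n\F_1)$, which diverges in probability because $u_n$ is regularly varying with index $1/(\alpha-1)>1$, so for each fixed $\delta$ the inner limit $n\to\infty$ of your error term is already infinite. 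The same problem is hidden in (b): the supremum $\sup_{(t,f)\in\Pi}fT^t(\Pi)$ runs over infinitely many atoms accumulating at $t=0$, where $T^t(\Pi)$ blows up like a negative power of $t$; its a.s.\ finiteness and the negligibility of the small-$t$ contribution are not consequences of monotone or dominated convergence, they are precisely what has to be proved. In (a) the Karamata bound only controls the numerator $A(s)-A_\delta(s)$; without a quantitatively better lower bound on the truncated inner sum than $\delta$ (the paper conditions on an atom in $(0,\eps)\times(\delta^{(2-\alpha)/2}(1+\delta^{-\xi}),\infty)$), the resulting ratio is of order $\delta^{-\alpha}$ and does not vanish.

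What is missing, and what the paper's proof actually consists of (its $D_2$ and $P_2$ parts), is a \emph{joint} control of small time and small fitness: over dyadic blocks $t\in[2^{-(k+1)},2^{-k})$ one shows, with summable failure probabilities, that the maximal rescaled fitness lies between explicit envelopes $\ell_k$ and $h_k$ of order $2^{-k/(\alpha-1)}$ up to logarithmic corrections; this simultaneously bounds the partial sums entering $T^t$ from below (giving $T^t\lesssim t^{1-(1+\zeta)/(\alpha-1)}$) and bounds $f$ from above by $h_k$, so the product $fT^t$ is $\lesssim 2^{-k(1-\zeta/(\alpha-1))}$ and vanishes as the block index grows; on the limit side this is run through Borel--Cantelli, on the prelimit side through the events $\mathcal{A}^{\F}_{k,n}$ and a Markov bound. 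Without an argument of this type your iterated-limit scheme cannot close. On the positive side, your continuity claim at the CMT step without a time cutoff is plausible (above level $\delta$ there are finitely many atoms and $T^t_\delta$ is an explicit finite sum of their coordinates), and would mildly streamline the paper's Lemma \ref{lemma:cont}, but it does not touch the truncation-removal step where the real difficulty lies.
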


In order to prove Proposition \ref{prop:infmeanmaxconv}, one would normally prove the continuity of the functional $T^\eps$ and combine the weak convergence of $\Pi_n$ with the continuous mapping theorem to yield the required result, as Resnick does in his proof of Proposition 4.20. This does, however, not work in this case. Due to the specific form of the functional, proving its continuity is not directly possible. Therefore, we investigate $T^\eps_\delta$ as defined in \eqref{eq:Top} and show that this functional is indeed continuous and is `sufficiently close' to $T^\eps$. This is worked out in the following two lemmas:

\begin{lemma}\label{lemma:cont}
	Consider, for $\eps\in(0,1),\delta>0$ fixed, the operator $T^\eps_\delta$ as in \eqref{eq:Top}. Then, the mapping $\Pi\mapsto\sum_{(t,f)\in\Pi:t>\eps,f>\delta}\delta_{(fT_\delta^t(\Pi))}$ is continuous in the vague topology for measures $\Pi\in M_p(E)$ satisfying the following conditions:
	\be\ba\label{eq:picond}
	\Pi(\{s\}\times(0,\infty))&=\Pi((s,t)\times\{0\})=\Pi((s,t)\times \{\infty\})= 0,\qquad &&\forall s<t\in[0,1],\\
	\Pi((0,\eps)\times(\delta,\infty))&>0,\qquad \Pi((s,t)\times(x,\infty))< \infty,\qquad &&\forall s<t\in[0,1],x> 0.
	\ea \ee 
\end{lemma}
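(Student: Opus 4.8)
The plan is to establish continuity of the induced map directly, by hand, since (as the authors note) the functional $T^\eps_\delta$ is too irregular for the continuous mapping theorem to be applied as in \cite[Prop.~4.20]{Res13}. Fix $\Pi$ satisfying \eqref{eq:picond} and let $\Pi_n\to\Pi$ vaguely in $M_p(E)$; write $\nu_\Pi:=\sum_{(t,f)\in\Pi:\,t>\eps,\,f>\delta}\delta_{fT^t_\delta(\Pi)}$ for the image. Since $\Pi((0,1)\times(\delta,\infty))<\infty$ by the last condition in \eqref{eq:picond}, $\Pi$ has only finitely many points $(t_1,f_1),\dots,(t_N,f_N)$ above level $\delta$, with the $t_j$ distinct (general position); by $\Pi((0,\eps)\times(\delta,\infty))>0$ at least one of them has $t_j<\eps$, so $t_1<\eps$ and hence $T^{t_j}_\delta(\Pi)=\int_{t_j}^1\big(\sum_{k:\,t_k\le s}f_k\big)^{-1}\,\d s$ is finite for every $j$ with $t_j>\eps$. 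First I would fix a compact rectangle $K:=[a,b]\times[\delta,M]$ with $0<a<t_1$, $t_N<b<1$ and $M>\max_j f_j$, chosen so that $\Pi(\partial K)=0$ (the vertical sides and the level $\delta$ carry no $\Pi$-mass by \eqref{eq:picond}, and $M$ can be taken to avoid the countably many atoms of $\Pi$). By the standard characterisation of vague convergence of point measures (see, e.g.,~\cite{Res13}), for $n$ large $\Pi_n(K)=\Pi(K)=N$ and the $N$ points of $\Pi_n$ in $K$ can be labelled $(t^{(n)}_j,f^{(n)}_j)$ so that $(t^{(n)}_j,f^{(n)}_j)\to(t_j,f_j)$ for each $j$.

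The main obstacle, and precisely the reason the truncated functional $T^\eps_\delta$ is introduced in place of $T^\eps$, is to show that for $n$ large \emph{no} point of $\Pi_n$ with fitness $>\delta$ lies outside $K$ --- i.e.\ that the mass of $\Pi_n$ above level $\delta$ does not escape to the edges $t\in\{0,1\}$ or $f=\infty$. A point of $\Pi_n$ with $f>M$ would force a limit point of $\Pi$ with fitness in $[M,\infty]$, which is excluded by the choice of $M$ together with the condition $\Pi((s,t)\times\{\infty\})=0$; a point with $\delta<f\le M$ but $t\notin[a,b]$ would force a limit point of $\Pi$ with fitness in $[\delta,M]$ and time in $[0,a]\cup[b,1]$, excluded by the choice of $a,b$ together with the condition on vertical slices in \eqref{eq:picond}. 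This is the only place where one genuinely uses that the vague topology is taken so that escape of mass to the boundary of $E$ is detected and ruled out by the hypotheses on the limit $\Pi$.

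Granting this, I would finish as follows. For every $s\in(0,1)$ not equal to any $t_j$ --- hence, for $n$ large, not equal to any $t^{(n)}_j$ --- the no-escape step gives
\[
\int_{E_\delta}f\,\ind_{\{t\le s\}}\,\d\Pi_n(t,f)=\sum_{j:\,t^{(n)}_j\le s}f^{(n)}_j\;\longrightarrow\;\sum_{j:\,t_j\le s}f_j=\int_{E_\delta}f\,\ind_{\{t\le s\}}\,\d\Pi(t,f).
\]
Moreover, since $t^{(n)}_1\to t_1<\eps$, for $n$ large this inner integral is $\ge f^{(n)}_1\ge f_1/2$ for every $s\in(\eps,1)$, so the integrand $s\mapsto\big(\int_{E_\delta}f\,\ind_{\{t\le s\}}\,\d\Pi_n\big)^{-1}$ is bounded by $2/f_1$ on $[\eps,1]$ uniformly in $n$. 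Dominated convergence, together with $t^{(n)}_j\to t_j$, then yields $T^{t^{(n)}_j}_\delta(\Pi_n)\to T^{t_j}_\delta(\Pi)$, and hence $f^{(n)}_jT^{t^{(n)}_j}_\delta(\Pi_n)\to f_jT^{t_j}_\delta(\Pi)$ for every $j$ with $t_j>\eps$. Finally, because $\Pi$ places no point on the line $\{t=\eps\}$, for $n$ large the indices $j$ with $t^{(n)}_j>\eps$ are exactly those with $t_j>\eps$; therefore $\sum_{j:\,t^{(n)}_j>\eps}\delta_{f^{(n)}_jT^{t^{(n)}_j}_\delta(\Pi_n)}$ is a finite sum of atoms converging individually to those of $\nu_\Pi$, which is the asserted vague convergence of the images.
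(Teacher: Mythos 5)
Your proof is correct and follows essentially the same route as the paper's: reduce to the finitely many points of $\Pi$ above level $\delta$, match them with the corresponding points of $\Pi_n$ via vague convergence, and deduce term-by-term convergence of $f\,T^{t}_\delta$, using the guaranteed point of $\Pi$ in $(0,\eps)\times(\delta,\infty)$ to bound the inner integral away from zero. The differences are only in execution --- you make explicit the ``no extra points of $\Pi_n$ above level $\delta$'' step that the paper asserts when identifying vague convergence with convergence of the labelled points, and you use dominated convergence where the paper writes $T^\eps_\delta$ as an explicit finite sum and argues via the triangle inequality --- with the one small caveat that your assertion that \eqref{eq:picond} excludes $\Pi$-mass on the line $f=\delta$ is not literally contained in \eqref{eq:picond}, though the paper's own argument makes the same tacit assumption.
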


\begin{remark}\label{rem:cont}
	We note that for a PPP $\Pi$ with intensity measure $\nu$ as introduced above, all the conditions in \eqref{eq:picond} are satisfied almost surely, except for $\Pi((0,\eps)\times(\delta,\infty))>0$, which happens with positive probability only.
\end{remark}

\begin{proof}[Proof of Lemma \ref{lemma:cont}]
	We first prove that, for fixed $\eps\in(0,1),\delta>0$, the mapping \\
	$\Pi\mapsto \sum_{(t,f)\in \Pi:t>\eps,f>\delta}\delta_{(T^\eps_\delta(\Pi))}$ is continuous in the vague topology for measures $\Pi\in M_p(E)$. We obtain this by taking $\Pi_n,\Pi\in M_p(E)$ such that $\Pi_n\overset{v}{\rightarrow}\Pi$, and showing that the image of the mapping pf $\Pi_n$ introduced above also converges vaguely to the mapping of $\Pi$. Since the image is a point measure with only finitely many points, due to the last condition in \eqref{eq:picond}, we can label the points $(t,f)$ in $\Pi$ such that $t>\eps,f>\delta$, by $(t_i,f_i), 1\leq i\leq p$ for some $p\in\N$, where we order the points such that $t_i$ is increasing in $i$. We can do the same for the points of $\Pi_n$, where we add a superscript $n$. Vague convergence is then equivalent to the convergence of $(t^n_i,f^n_i)\in\Pi_n$ to $(t_i,f_i)\in\Pi$ for all $1\leq i\leq p$, since there are only finitely many points.  

	By \cite[Proposition 3.13]{Res13}, we can fix $\eta>0$ and take $n$ large enough such that the balls $B_i:=B((t_i,f_i),\eta)$, centred around $(t_i,f_i)$ with radii $\eta$, contain the points $(t_i^n,f_i^n)$ and $B_i\cap B_j=\emptyset$ for $i\neq j$. Thus, let us set $q:=\Pi((0,\eps)\times(\delta,\infty))>0$ and take $n$ large enough such that $\Pi_n((0,\eps)\times(\delta,\infty))=q$ as well. That is, points $(t_i,f_i),(t_i^n,f_i^n), i=1,\ldots,q,$ satisfy $t_i^n<\eps$ and points $(t_i,f_i),(t_i^n,f_i^n), i=q+1,\ldots,p$, satisfy $t_i^n>\eps$ (due to the first condition in \eqref{eq:picond} there are no points $(t,f)$ such that $t=\eps$ a.s.). We can now express $T^\eps_\delta(\Pi)$ in terms of a sum. Namely,
	\be\label{eq:Tinsum}
	T^\eps_\delta(\Pi)=\int_\eps^1 \Big(\int_{E_\delta} f\ind_{\{t\leq s\}} \d\Pi(t,f)\Big)^{-1} \d s = \sum_{i=q+1}^{p+1}\Big[(t_i - t_{i-1}\vee \eps) \Big(\sum_{j=1}^{i-1} f_i \Big)^{-1}\Big],
	\ee 
	where we set $t_{p+1}:=1$. A similar expression follows for $\Pi_n$, with $t^n_{p+1}:=1$. Since the sum contains a finite number of terms, the convergence of $T^\eps_\delta(\Pi_n)\to T^\eps_\delta(\Pi)$ immediately follows from the convergence of the individual points. As $\Pi_n\overset{v}{\longrightarrow}\Pi$, $f_i^n\rightarrow f_i$ as $n$ tends to infinity for all $i=1,\ldots,p$ as well. What remains to prove, is that $(T^{t^n_i}_\delta(\Pi_n),1\leq i \leq p)\to (T^{t_i}_\delta(\Pi),1\leq i \leq p)$ as $n\to\infty$. Using the triangle inequality, we obtain
	\be
	|T^{t^n_i}_\delta(\Pi_n)-T^{t_i}_\delta(\Pi)|\leq |T^{t^n_i}_\delta(\Pi_n)-T^{t_i}_\delta(\Pi_n)|+|T^{t_i}_\delta(\Pi_n)-T^{t_i}_\delta(\Pi)|.
	\ee 
	Let us first consider $2\leq i\leq p$. The second term on the right-hand-side tends to zero by the above, as for $i\geq 2$, $\Pi_n((0,t_i)\times (\delta,\infty))>0$ and thus the conditions in \eqref{eq:picond} are satisfied with $\eps=t_i$. The first term can be rewritten using the definition of $T^\eps_\delta$ in \eqref{eq:Top} as 
	\be \ba
	|T^{t^n_i}_\delta(\Pi_n)-T^{t_i}_\delta(\Pi_n)|&=\int_{t^n_i\wedge t_i}^{t^n_i\vee t_i}\Big(\int_{E_\delta}f\ind_{\{t\leq s\}}\d \Pi_n(t,f)\Big)^{-1}\d s\\
	&\leq |t^n_i-t_i|\Big(\int_{E_\delta}f\ind_{\{t\leq t^n_i\wedge t_i\}}\d \Pi_n(t,f)\Big)^{-1},
	\ea\ee 
	where we bound the integrand of the outer integral from above by replacing the integration variable $s$ by $t^n_i\wedge t_i$ in the integral's argument. In the integral that remains, we can bound $f$ from below by $\delta$ and therefore, for $n$ sufficiently large, we can bound the integral from below by $\delta$, as there is always at least one particle $(t,f)$ such that $t\leq t^n_i\vee t_i$ since $i\geq 2$ and the balls $B_i$ introduced above are disjoint. We thus obtain the upper bound $|t^n_i-t_i|/\delta$, which tends to zero with $n$. For $i=1$, we adapt our approach to find
	\be\ba
	|T^{t^n_1}_\delta(\Pi_n)-T^{t_1}_\delta(\Pi)|\leq\min\{&|T^{t^n_1}_\delta(\Pi_n)-T^{t_1}_\delta(\Pi_n)|+|T^{t_1}_\delta(\Pi_n)-T^{t_1}_\delta(\Pi)|,\\ &|T^{t^n_1}_\delta(\Pi_n)-T^{t_1^n}_\delta(\Pi)|+|T^{t_1^n}_\delta(\Pi)-T^{t_1}_\delta(\Pi)|\}.
	\ea\ee 
	When $t_1<t^n_1$, the first term is infinite and we use the second term, while the second term is infinite when $t_1>t^n_1$ and we then use the first term. When the first term in finite ($t_1>t^n_1$), its first term is bounded from above by $(t_1-t^n_1)\delta^{-1}<\eta/\delta$ and its second term can be bounded by a constant times $\eta$, as follows when using \eqref{eq:Tinsum}. Similarly, when the second term of the minimum is finite ($t_1\leq t^n_1$), its second term is bounded from above by $(t^n_1-t_1)\delta^{-1}<\eta/\delta$ and its first term can be bounded by a constant times $\eta$. As $\eta$ is arbitrary, the required result holds.
\end{proof}

We are also interested in how `close' $T^\eps(\Pi)$ and $T^\eps_\delta(\Pi)$ (resp.\ $T^\eps(\Pi_n)$ and $T^\eps_\delta(\Pi_n)$) are when $\delta$ is small (resp.\ $\delta$ is small and $n$ is large). We formalise this in the following lemma:

\begin{lemma}\label{lemma:Tdeltaconv}
	Consider the operator $T^\eps_\delta$ as in \eqref{eq:Top} and the point process $\Pi_n$ as in \eqref{eq:pin}, let $\Pi$ be its weak limit and let Assumption \ref{Ass:powerlaw} hold with $\alpha\in(1,2)$. For $\eps\in(0,1),\eta>0$ fixed, 
	\be \ba\label{eq:Tdeltabehave}
	&T^\eps_\delta(\Pi)\toinp T^\eps(\Pi)\text{ as }\delta\downarrow 0,\\
	\lim_{\delta\downarrow0}\lim_{n\rightarrow\infty}&\P{|T^\eps_\delta(\Pi_n)-T^{\eps}(\Pi_n)|\geq \eta}=0.
	\ea\ee 
\end{lemma}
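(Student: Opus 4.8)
The plan is to exploit the monotonicity $T^\eps_\delta(Q)\ge T^\eps(Q)$, valid for every point measure $Q\in M_p(E)$ for which $T^\eps_\delta(Q)$ is defined: since $E_\delta\subseteq E$, discarding the atoms with $f\le\delta$ only decreases each inner integral and hence increases its reciprocal. So the absolute values may be dropped and it suffices to bound a \emph{nonnegative} difference from above. Writing, for a point measure $Q$, $X(s):=\int_E f\ind_{\{t\le s\}}\,\d Q$, $X_\delta(s):=\int_{E_\delta}f\ind_{\{t\le s\}}\,\d Q$ and $Y_\delta(s):=X(s)-X_\delta(s)=\int_{(0,s]\times(0,\delta]}f\,\d Q\ge0$ (all nondecreasing in $s$), a one-line computation gives
\be
0\le T^\eps_\delta(Q)-T^\eps(Q)=\int_\eps^1\frac{Y_\delta(s)}{X_\delta(s)\,X(s)}\,\d s\le\frac{Y_\delta(1)}{X_\delta(\eps)\,X(\eps)}\le\frac{Y_\delta(1)}{(X(\eps)-Y_\delta(1))\,X(\eps)}.
\ee
So the lemma reduces to two facts, applied first to $Q=\Pi$ and then to $Q=\Pi_n$: (a) the total mass $Y_\delta(1)$ of the small atoms ($f\le\delta$) is negligible as $\delta\downarrow0$; and (b) $X(\eps)$, the weight carried by atoms with $t\le\eps$, is finite and bounded away from $0$. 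It is precisely the hypothesis $\alpha\in(1,2)$ that makes the small atoms summable, so that $Y_\delta(1)<\infty$ holds at all.

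For the first statement I would take $Q=\Pi$ and note that $Y_\delta(1)=\sum_{(t,f)\in\Pi,\,f\le\delta}f$ is nonincreasing as $\delta\downarrow0$, with $\E{Y_\delta(1)}=(\alpha-1)\int_0^\delta y^{1-\alpha}\,\d y=\tfrac{\alpha-1}{2-\alpha}\delta^{2-\alpha}\to0$, so by monotone convergence $Y_\delta(1)\downarrow0$ almost surely. Also $X(\eps)\in(0,\infty)$ a.s.: positivity because $\nu((0,\eps]\times(0,\infty))=\infty$ forces infinitely many atoms with $t\le\eps$; finiteness because splitting at $f=1$ gives $\E{\sum_{t\le\eps,\,f\le1}f}<\infty$ while only finitely many atoms have $f>1$. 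Hence for a.e.\ $\omega$, once $\delta$ is small enough that $Y_\delta(1)\le X(\eps)/2$, we have $X_\delta(\eps)\ge X(\eps)/2>0$ (so $T^\eps_\delta(\Pi)$ is defined) and the displayed bound reads $\le 2Y_\delta(1)/X(\eps)^2\to0$. This gives almost sure, hence in-probability, convergence.

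For the second statement I would apply the same bound to $Q=\Pi_n$, so that $X(\eps)=X_n(\eps):=u_n^{-1}S_{\lfloor n\eps\rfloor}$ and $Y_\delta(1)=Y_{n,\delta}:=u_n^{-1}\sum_{i=1}^n\F_i\ind_{\{\F_i\le\delta u_n\}}$. For (a) I would compute $\E{Y_{n,\delta}}=\tfrac{n}{u_n}\E{\F\ind_{\{\F\le\delta u_n\}}}$ and use Karamata's theorem \cite{BinGolTeu87}: since $\P{\F\ge x}$ is regularly varying with index $-(\alpha-1)\in(-1,0)$, the truncated mean satisfies $\E{\F\ind_{\{\F\le x\}}}=\int_0^x\P{\F>y}\,\d y-x\P{\F>x}\sim\tfrac{\alpha-1}{2-\alpha}\ell(x)x^{2-\alpha}$; combined with $n\P{\F\ge u_n}\to1$ (the normalisation underlying $\Pi_n\Rightarrow\Pi$) and $\ell(\delta u_n)/\ell(u_n)\to1$, this yields $\lim_{n\to\infty}\E{Y_{n,\delta}}=\tfrac{\alpha-1}{2-\alpha}\delta^{2-\alpha}=:g(\delta)$, with $g(\delta)\to0$. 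For (b) I would use $X_n(\eps)\ge u_n^{-1}\max_{i\le\lfloor n\eps\rfloor}\F_i$ together with the regular variation of $n\mapsto u_n$ of index $1/(\alpha-1)$ and classical extreme value theory \cite{Res13} to see that $u_n^{-1}\max_{i\le\lfloor n\eps\rfloor}\F_i$ converges in distribution to $\eps^{1/(\alpha-1)}$ times a Fr\'echet random variable, whose law has no atom at $0$; hence $\lim_{c\downarrow0}\limsup_{n\to\infty}\P{X_n(\eps)\le c}=0$. Intersecting the good events $\{X_n(\eps)>c\}$ and $\{Y_{n,\delta}<c/2\}$ — on which $T^\eps_\delta(\Pi_n)$ is defined and, by the bound, the difference is $\le 2Y_{n,\delta}/c^2$ — and applying Markov's inequality gives
\be
\P{\,|T^\eps_\delta(\Pi_n)-T^\eps(\Pi_n)|\ge\eta\,}\le\P{X_n(\eps)\le c}+\Big(\tfrac{2}{c}+\tfrac{2}{\eta c^2}\Big)\E{Y_{n,\delta}},
\ee
and I would conclude by letting $n\to\infty$, then $\delta\downarrow0$ (the second term vanishes since $g(\delta)\to0$), and finally $c\downarrow0$ (the first term vanishes by (b)).

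The step I expect to be the main obstacle is ingredient (b) in the discrete case: because $\E{\F}=\infty$ in the extreme-disorder regime, there is no law of large numbers for $S_{\lfloor n\eps\rfloor}$, so one cannot directly assert that $X_n(\eps)$ stays away from $0$. The clean fix is to retain only a single extreme summand and invoke the Fr\'echet limit; this also spares us from proving the (true but more delicate) distributional convergence $X_n(\eps)\Rightarrow X(\eps)$, which is not supplied by point-process convergence alone. A secondary, purely bookkeeping nuisance is that $T^\eps_\delta$ is undefined when $X_\delta(\eps)=0$, which has positive probability for fixed $\delta$; but that probability vanishes (as $\delta\downarrow0$ for $\Pi$, as $n\to\infty$ for $\Pi_n$) and is harmlessly absorbed into the good-event decomposition above.
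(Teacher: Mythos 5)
Your proof is correct, and it shares the paper's skeleton: write the nonnegative difference as $\int_\eps^1 Y_\delta(s)/(X_\delta(s)X(s))\,\d s$, bound the numerator (the total mass of the atoms with $f\le\delta$) via Markov's inequality together with the Karamata asymptotics $\E{Y_{n,\delta}}\to\tfrac{\alpha-1}{2-\alpha}\delta^{2-\alpha}$, and keep the denominator away from zero by exhibiting at least one sufficiently high atom with $t\le\eps$. The implementations of that last step differ. The paper tunes a $\delta$-dependent strip $E_\delta^\xi=(0,\eps)\times(\delta^{(2-\alpha)/2}(1+\delta^{-\xi}),\infty)$ so that, on $\{\Pi(E_\delta^\xi)\ne 0\}$, the squared denominator is at least $\delta^{2-\alpha}(1+\delta^{-\xi})^2$ and exactly cancels the $\delta^{2-\alpha}$ coming from the numerator, while the void probability (and its $\Pi_n$ analogue) vanishes as $\delta\downarrow 0$; no auxiliary parameter is needed and the two stated limits come out directly. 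You instead fix a cutoff $c$, bound $X_n(\eps)\ge u_n^{-1}\max_{i\le\lfloor n\eps\rfloor}\F_i$, and use the Fr\'echet limit (plus regular variation of $u_n$) to get $\lim_{c\downarrow 0}\limsup_{n\to\infty}\P{X_n(\eps)\le c}=0$, sending $c\downarrow 0$ only after $n\to\infty$ and $\delta\downarrow 0$; this is legitimate because your bound on the iterated limit holds for every fixed $c$. Your treatment of the first statement is in fact slightly stronger and cleaner than the paper's: monotonicity of $Y_\delta(1)$ in $\delta$ together with $\E{Y_\delta(1)}\to 0$ gives $Y_\delta(1)\toas 0$, and since $X(\eps)\in(0,\infty)$ almost surely you obtain almost sure convergence of $T^\eps_\delta(\Pi)$ to $T^\eps(\Pi)$, whereas the paper's conditioning argument yields only the stated convergence in probability. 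Your bookkeeping of the events on which $T^\eps_\delta$ is undefined is also adequate, since these are contained in the complements of your good events.
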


\begin{proof}
	We start by proving the first statement. We fix $\eta>0$ and define $E_\delta^\xi:=(0,\eps)\times(\delta^{(2-\alpha)/2}(1+\delta^{-\xi}),\infty)$, where $\xi\in(0,(2-\alpha)/2)$. Then,
	\be\label{eq:Tconditional}
	\P{|T_\delta^\eps(\Pi)-T^\eps(\Pi)|\geq \eta}\leq\mathbb{P}(|T^\eps_\delta(\Pi)-T^\eps(\Pi)|\geq \eta\,|\,\Pi(E_\delta^\xi)\neq 0)+\mathbb{P}(\Pi(E_\delta^\xi)=0).
	\ee
	We condition on $\{\Pi(E_\delta^\xi)\neq0\}$ to ensure that $T^\eps_\delta(\Pi)$ is finite and show that on $\{\Pi(E_\delta^\xi)\neq0\}$ the difference in $T^\eps_\delta(\Pi)$ and $T^\eps(\Pi)$ will tend to zero in probability as $\delta\downarrow0$. We first compute the second probability on the right-hand-side. 
	\be \label{eq:emptyppp}
	\P{\Pi(E_\delta^\xi)=0}=\exp\bigg\{-\int_{E_\delta^\xi}(\alpha-1) y^{-\alpha}\d y\d t\bigg\}=\exp\bigg\{-\eps\delta^{-(\alpha-1)(2-\alpha)/2}(1+\delta^{-\xi})^{-(\alpha-1)}\bigg\}.
	\ee 
	Note that, by the choice of $\xi$, this probability tends to zero with $\delta$. Now, we bound the conditional probability in \eqref{eq:Tconditional}.
	\be\ba \label{eq:tepsdeltatoinp}
	\mathbb{P}&(|T^\eps_\delta(\Pi)-T^\eps(\Pi)|\geq \eta\,|\, \Pi(E_\delta^\xi)\neq0)\\
	&=\P{\bigg| \int_\eps^1\bigg(\int_{E_\delta} f\ind_{\{t\leq s\}}\d\Pi(t,f)\bigg )^{-1}-\bigg(\int_{E} f\ind_{\{t\leq s\}}\d \Pi(t,f)\bigg)^{-1}\d s \bigg|\geq \eta\,\bigg|\, \Pi(E_\delta^\xi)\neq0}\\
	&\leq\P{\int_\eps^1\bigg(\int_{E\backslash E_\delta}f\ind_{\{t\leq s\}}\d\Pi(t,f)\bigg)\Big/ \bigg(\int_{E_\delta} f\ind_{\{t\leq s\}}\d \Pi(t,f) \bigg)^2\d s\geq \eta\,\bigg|\, \Pi(E_\delta^\xi)\neq0}\\
	&\leq \P{\int_{E\backslash E_\delta}f \d\Pi(t,f)\geq \frac{\eta}{1-\eps}\bigg(\int_{E_\delta} f\ind_{\{t\leq \eps\}}\d \Pi(t,f)\bigg)^2\,\bigg|\, \Pi(E_\delta^\xi)\neq0},
	\ea\ee 
	where, in the last line, we replaced the integration variable $s$ with $1$ in the integral in the numerator and with $\eps$ in the integral the denominator. We now bound the integral over $E_\delta$ on the right-hand-side from below using $\Pi(E^\xi_\delta)\geq 1$ and use Markov's inequality to find the upper bound
	\be \ba\label{eq:Tdeltaprobbound}
	\mathbb{P}&\bigg(\int_{E\backslash E_\delta}f \d\Pi(t,f)\geq \frac{\eta}{1-\eps}\delta^{2-\alpha}(1+\delta^{-\xi})^2\,\bigg|\, \Pi(E_\delta^\xi)\neq0\bigg)\\
	&=\P{\int_{E\backslash E_\delta}f \d\Pi(t,f)\geq \frac{\eta}{1-\eps}\delta^{2-\alpha}(1+\delta^{-\xi})^2}\\
	&\leq \E{\int_{E\backslash E_\delta}f d\Pi(t,f)}\frac{1-\eps}{\eta}\delta^{-(2-\alpha)}(1+\delta^{-\xi})^{-2}\\
	&=\int_{E\backslash E_\delta}(\alpha-1)x^{1-\alpha} \d t\, \d x\frac{1-\eps}{\eta}\delta^{-(2-\alpha)}(1+\delta^{-\xi})^{-2}= \frac{(1-\eps)(\alpha-1)}{\eta(2-\alpha)}(1+\delta^{-\xi})^{-2},
	\ea \ee 
	which tends to zero as $\delta \downarrow 0$. Note that we can omit the conditional statement in the second line, as the integral is independent of $\Pi(E^\xi_\delta)$. Combining \eqref{eq:emptyppp} and the upper bound of \eqref{eq:Tdeltaprobbound} in \eqref{eq:Tconditional}, implies that $T^\eps_\delta(\Pi)\toinp T^\eps(\Pi)$ as $\delta\downarrow 0$. We now prove the second statement in \eqref{eq:Tdeltabehave}, which uses a similar approach. Namely, using analogous steps as in \eqref{eq:Tconditional}, \eqref{eq:tepsdeltatoinp} and \eqref{eq:Tdeltaprobbound}, we obtain
	\be \ba\label{eq:Tdeltapinconv}
	\mathbb{P}&(|T^\eps(\Pi_n)-T^\eps_\delta(\Pi_n)|\geq \eta )\\
	&\leq \P{\int_{E\backslash E_\delta}f \d\Pi_n(t,f)\geq \frac{\eta}{1-\eps}\delta^{2-\alpha}(1+\delta^{-\xi})^{-2}}+\P{\Pi_n(E^\xi_\delta)=0}.
	\ea\ee 
	The second probability on the right-hand-side converges to $\mathbb{P}(\Pi(E^\xi_\delta)=0)$ as $n$ tends to infinity, and then to zero as $\delta$ tends to zero by \eqref{eq:emptyppp}. Using Markov's inequality, we obtain an upper bound for the first probability on the right-hand-side of the form
	\be\ba
	\sum_{i=1}^n& \ \E{\F_i/u_n \ind_{\{\F_i/u_n\leq \delta\}}}\frac{1-\eps}{\eta}\delta^{-(2-\alpha)}(1+\delta^{-\xi})^2\\
	&=\frac{1-\eps}{\eta}\delta^{-(2-\alpha)}(1+\delta^{-\xi})^2\frac{n}{u_n}\int_{x_\ell}^{\delta u_n} \ell(x)x^{-(\alpha-1)}\d x,
	\ea \ee 
	where $x_\ell:=\inf\{x\in \R:F_\F(x)>0\}$. Using \cite[Proposition 1.5.8]{BinGolTeu87}, yields 
	\be 
	\int_{x_\ell}^{\delta u_n} \ell(x)x^{-(\alpha-1)}\d x \sim \frac{1}{2-\alpha}(\delta u_n)^{2-\alpha}\ell(\delta u_n), \text{ as $n\rightarrow \infty$.}
	\ee 
	Thus, as $n\rightarrow\infty$, since $\ell$ is slowly-varying,
	\be
	\frac{1-\eps}{\eta}\delta^{-(2-\alpha)}(1+\delta^{-\xi})^2\frac{n}{u_n}\int_{x_\ell}^{\delta u_n} \ell(x)x^{-(\alpha-1)}\d x \sim \frac{(1-\eps)}{\eta(2-\alpha)}(1+\delta^{-\xi})^{-2}n\ell(u_n)u_n^{-(\alpha-1)}.
	\ee 
	Using \cite[Corollary 4.19 and Proposition 3.21]{Res13}, we conclude that $n\ell(u_n)u_n^{-(\alpha-1)}$ converges to $1$ and so the right-hand-side tends to zero with $\delta$. Thus,
	\be \label{eq:Tdeltapinlim}
	\lim_{\delta\downarrow0}\lim_{n\rightarrow\infty}\P{|T^\eps(\Pi_n)-T^\eps_\delta(\Pi_n)|\geq \eta}=0,
	\ee
	which finishes the proof.
\end{proof}

We now prove Proposition \ref{prop:infmeanmaxconv}.

\begin{proof}[Proof of Proposition \ref{prop:infmeanmaxconv}]
	
	For a closed set $C\subseteq \R_+$ and $\eta>0$, let  $C_\eta:=\{x\in\R:\inf_{y\in C}|x-y|\leq \eta\}$ be the $\eta$-enlargement of $C$ and let us define the events
	\be \ba\label{eq:eepsdelta}
	E_{n,\eps,\delta}(\eta)&:=\Big\{\Big|\max_{\inn}\frac{\F_i}{u_n}T^{i/n}(\Pi_n)-\max_{\eps n\leq i\leq n: \F_i\geq \delta u_n}\frac{\F_i}{u_n}T_\delta^{i/n}(\Pi_n)\Big|<\eta\Big\},\\ F_{n,\eps,\delta}&:=\{\Pi_n((0,\eps)\times(\delta,\infty))\geq 1\}.
	\ea\ee 
	We can then write
	\be \ba\label{eq:mappingtoindis}
	\mathbb{P}\Big(\max_{\inn}\frac{\F_i}{u_n}T^{i/n}(\Pi_n)\in C\Big)\leq{}& \P{\Big\{\max_{\inn}\frac{\F_i}{u_n}T^{i/n}(\Pi_n)\in C\Big\}\cap E_{n,\eps,\delta}(\eta) \cap F_{n,\eps,\delta}}\\
	&+\P{E_{n,\eps,\delta}(\eta)^c}+\P{F_{n,\eps,\delta}^c}.
	\ea\ee 
	Then, on $E_{n,\eps,\delta}(\eta)$ and using $C_\eta$, we can bound the first probability on the right-hand-side from above by 
	\be
	\P{\Big\{\max_{\eps n\leq i\leq n:\F_i\geq \delta u_n}\frac{\F_i}{u_n}T_\delta^{i/n}(\Pi_n)\in C_\eta\Big\}\cap F_{n,\eps,\delta}}.
	\ee 
	We note that every term in the maximum is bounded from above by $1$. Then, since for $n$ large $\Pi_n((\eps,1)\times(\delta,\infty))=\Pi((\eps,1)\times(\delta,\infty))<\infty$ and on $F_{n,\eps,\delta}$, it follows from the continuous mapping theorem, Lemma \ref{lemma:cont} and Remark \ref{rem:cont} that
	\be\ba \label{eq:maxtcontmap}
	\lim_{n\to\infty}&\P{\Big\{\max_{\eps n\leq i\leq n:\F_i\geq \delta u_n}\frac{\F_i}{u_n}T_\delta^{i/n}(\Pi_n)\in C_\eta\Big\}\cap F_{n,\eps,\delta}}\\
	&=\mathbb{P}\bigg(\Big\{\sup_{(t,f)\in\Pi: t\geq \eps,f\geq \delta}f T_\delta^{t}(\Pi)\in C_\eta\Big\}\cap F_{\eps,\delta}\bigg),
	\ea\ee 
	where $F_{\eps,\delta}:=\{\Pi((0,\eps)\times(\delta,\infty))\geq 1\}$. We now claim that it is possible to remove the $\delta$ in $T^\eps_\delta(\Pi)$ and the $\delta$ and $\eps$ constraints in the supremum in \eqref{eq:maxtcontmap}, as well as that the two terms in the last line of \eqref{eq:mappingtoindis} tend to zero when letting $n$ tend to infinity, and then $\delta$ and $\eps$ to zero. These two tasks require a very similar approach, as they are essentially the same, one with $\Pi_n$ and the other with its weak limit $\Pi$. We start with the latter claim. We want to show that
	\be \label{eq:loseconstraint}
	\Big|\sup_{(t,f)\in\Pi: t\geq \eps,f\geq \delta}f T_\delta^{t}(\Pi)-\sup_{(t,f)\in\Pi}f T^{t}(\Pi)\Big|\toinp 0\text{ as first }\delta\downarrow0\text{ and then }\eps\downarrow 0.
	\ee 
	To this end, we write 
	\be \ba
	\Big|\sup_{(t,f)\in\Pi: t\geq \eps,f\geq \delta}f T_\delta^{t}(\Pi)-\sup_{(t,f)\in\Pi}f T^{t}(\Pi)\Big|\leq{}& \Big|\sup_{(t,f)\in\Pi: t\geq \eps,f\geq \delta}f T_\delta^{t}(\Pi)-\sup_{(t,f)\in\Pi: t\geq \eps}f T^{t}(\Pi)\Big|\\
	&+\Big|\sup_{(t,f)\in\Pi: t\geq \eps}f T^{t}(\Pi)-\sup_{(t,f)\in\Pi}f T^{t}(\Pi)\Big|\\
	=:{}&D_1+D_2.
	\ea \ee 
	We first prove $D_1$ tends to zero in probability as $\delta\downarrow 0$. Namely, using the triangle inequality and the definitions of $T^\eps_\delta$ and $T^\eps$ in \eqref{eq:Top},
	\be \ba\label{eq:d1bound}
	D_1\leq{}& \Big|\sup_{(t,f)\in\Pi: t\geq \eps,f\geq \delta}f T_\delta^{t}(\Pi)-\sup_{(t,f)\in\Pi: t\geq \eps,f\geq \delta}f T^{t}(\Pi)\Big|\\
	&+\Big|\sup_{(t,f)\in\Pi: t\geq \eps,f\geq \delta}f T^{t}(\Pi)-\sup_{(t,f)\in\Pi: t\geq \eps}f T^{t}(\Pi)\Big|\\
	\leq{}& \sup_{(t,f)\in\Pi: t\geq \eps,f\geq \delta} f (T^t_\delta(\Pi)-T^t(\Pi))+\sup_{(t,f)\in\Pi:t\geq \eps,f<\delta} fT^t(\Pi)\\
	\leq{}& \Big(\sup_{(t,f)\in\Pi}f\Big) \sup_{(t,f)\in\Pi: t\geq \eps}(T^t_\delta(\Pi)-T^t(\Pi))+\delta T^\eps(\Pi)\\
	\leq{}&\Big(\sup_{(t,f)\in\Pi}f\Big) (T^\eps_\delta(\Pi)-T^\eps(\Pi))+\delta T^\eps(\Pi),
	\ea\ee 
	where the final inequality follows from the definitions of $T^\eps$ and $T^\eps_\delta$. Since $\alpha>1$, $\sup_{(t,f)\in\Pi}f<\infty$ almost surely. Furthermore, for any $\eps>0$ fixed, $T^\eps(\Pi)<\infty$ almost surely as well. Finally, by Lemma \ref{lemma:Tdeltaconv}, $(T^\eps_\delta(\Pi)-T^\eps(\Pi))\toinp 0$ as $\delta\downarrow 0$. Thus, we obtain that $D_1\toinp 0$ as $\delta\downarrow0$. We now show that $D_2\toas 0$ as $\eps\downarrow0$. We discretise the interval $(0,1)$ into smaller sub-intervals $[2^{-(k+1)},2^{-k}),k\geq0$. Then, 
	\be \label{eq:d2lim}
	\lim_{\eps\downarrow  0} D_2\leq \lim_{\eps\downarrow0} \sup_{(t,f)\in\Pi: t<\eps}fT^t(\Pi)=\lim_{K\rightarrow\infty}\sup_{k\geq K}\sup_{(t,f)\in\Pi:t\in[2^{-(k+1)},2^{-k})} fT^t(\Pi).
	\ee 
	We now bound the inner supremum, by controlling the size of the maximum fitness value in these sub-intervals. That is, we define, for $\xi>0,k\in\Z^+$,
	\be \ba\label{eq:lkhk}
	\ell_k&:=2^{-(k+1)/(\alpha-1)}\log((k+2)^{1+\xi})^{-1/(\alpha-1)},\\ h_k&:=2^{-(k+1)/(\alpha-1)}\log((1-(k+2)^{-(1+\xi)})^{-1})^{-1/(\alpha-1)}.
	\ea \ee 
	Now,
	\be \ba\label{eq:fbounds}
	\P{\Pi(\intk\times(h_k,\infty))\neq0}&=1-\exp\Big\{-\int_{2^{-(k+1)}}^{2^{-k}}\int_{h_k}^\infty(\alpha-1)x^{-\alpha}\d x\d t\Big\}\\
	&=1-\exp\{\log((1-(k+2)^{-(1+\xi)})\}\leq k^{-(1+\xi)},\\
	\P{\Pi(\intk\times(\ell_k,\infty))=0}&=\exp\Big\{-\int_{2^{-(k+1)}}^{2^{-k}}\int_{\ell_k}^\infty(\alpha-1)x^{-\alpha}\d x\d t\Big\}\leq k^{-(1+\xi)},
	\ea \ee 
	which are both summable. Therefore, by the Borel-Cantelli lemma, it follows that almost surely there exist a random index $L$, such that for all $k\geq L$,
	\be \label{eq:fitsup}
	\sup_{(t,f)\in\Pi:t\in\intk}f \in (\ell_k,h_k).
	\ee 
	Now, on the event $\{t\leq 2^{-L}\}$,
	\be \ba\label{eq:Tbound}
	T^t(\Pi)&=\int_t^1 \Big(\int_E f\ind_{\{u\leq s\}}\d\Pi(u,f)\Big)^{-1}\d s \\
	&= \int_t^{2^{-L}} \Big(\int_E f\ind_{\{u\leq s\}}\d\Pi(u,f)\Big)^{-1}\d s+\int_{2^{-L}}^1 \Big(\int_E f\ind_{\{u\leq s\}}\d\Pi(u,f)\Big)^{-1}\d s\\
	&\leq \int_t^{2^{-L}} (\sup_{(u,f)\in\Pi:u\leq s} f)^{-1}\d s + \Big(\int_E f\ind_{\{u\leq 2^{-L}\}}\d\Pi(u,f)\Big)^{-1}
	\ea \ee 
	By applying \eqref{eq:fitsup} to the both integrals, we find an upper bound
	\be
	\sum_{j=L}^{\lceil\log_2(1/t)\rceil} 2^{-(j+2)}\ell_{j+1}^{-1} + \ell_L^{-1}.
	\ee
	Using the definition of $\ell_j$ in \eqref{eq:lkhk}, for $j$ large and some $\zeta\in(0,\alpha-1)$, we obtain
	\be \ba\label{eq:Ttbound}
	T^t(\Pi)&\leq C\sum_{j=L}^{\lceil\log_2(1/t)\rceil} 2^{(j+1)((1+\zeta)/(\alpha-1)-1)} + \ell_L^{-1}\\
	&\leq  \wt C t^{1-(1+\zeta)/(\alpha-1)}+\ell^{-1}_L,
	\ea\ee 
	for some constant $\wt C>0$. Again using \eqref{eq:fitsup} and on $\{k>L\}$ (similar to $t\leq 2^{-L}$), we find
	\be \ba
	\sup_{(t,f)\in\Pi: t\in[2^{-(k+1)},2^{-k})} fT^t(\Pi) &\leq h_k(\wt C 2^{(k+1)((1+\zeta)/(\alpha-1)-1)}+\ell_L^{-1})\leq \wt C 2^{(k+1)(\zeta/(\alpha-1)-1)}k^\gamma+h_k\ell_L^{-1},
    \ea	\ee 
    for some $\gamma>(1+\xi)/(\alpha-1)$. We finish the argument by noting that $L<\infty$ almost surely and hence
	\be \ba\label{eq:limfT}
	\lim_{K\rightarrow\infty}\sup_{k\geq K}\sup_{(t,f)\in\Pi:t\in\intk}f T^t(\Pi)&\leq \lim_{K\rightarrow\infty}\sup_{k\geq K} \wt C 2^{(k+1)(\zeta/(\alpha-1)-1)}k^\gamma+h_k\ell_L^{-1}\\
	&=\lim_{K\rightarrow\infty}  \wt C 2^{(K+1)(\zeta/(\alpha-1)-1)}K^\gamma+h_K\ell_L^{-1}=0,
	\ea\ee 
	by the choice of $\zeta$. Thus, $D_2\toas 0$ as $\eps\downarrow 0$. Together with the convergence of $D_1$ to zero in probability, we obtain \eqref{eq:loseconstraint}. Recall $F_{n,\eps,\delta}$ from \eqref{eq:eepsdelta} and $F_{\eps,\delta}=\lim_{n\rightarrow \infty}F_{n,\eps,\delta}$ under \eqref{eq:maxtcontmap}. Evidently, by a similar argument as in \eqref{eq:emptyppp}, $\lim_{\delta\downarrow 0}\P{F_{\eps,\delta}}=1$ for all $\eps\in(0,1)$, which also shows the third probability in \eqref{eq:mappingtoindis} tends to zero as $n\to\infty$ and then $\delta\downarrow 0$. Combining this with \eqref{eq:loseconstraint} and \eqref{eq:maxtcontmap} yields
	\be\label{eq:contmap}
	\lim_{\eps\downarrow0}\lim_{\delta\downarrow0}\lim_{n\to\infty} \mathbb{P}\bigg(\Big\{\max_{\eps n\leq i\leq n:\F_i\geq \delta u_n}\frac{\F_i}{u_n}T_\delta^{i/n}(\Pi_n)\in C_\eta\Big\}\cap F_{n,\eps,\delta}\bigg) =\mathbb{P}\bigg(\sup_{(t,f)\in\Pi}fT^t(\Pi)\in C_\eta \bigg).
	\ee 
    Recall $E_{n,\eps,\delta}(\eta)$ from \eqref{eq:eepsdelta}. What remains to prove, is that for all $\eta>0$ fixed,
	\be 
	\lim_{\eps\downarrow 0}\lim_{\delta\downarrow 0}\lim_{n\rightarrow\infty}\P{E_{n,\eps,\delta}(\eta)^c}=0,
	\ee 
	which is very similar to \eqref{eq:loseconstraint}, though we now deal with $\Pi_n$ rather than $\Pi$. Again, we use the triangle inequality to find
	\be \ba\label{eq:p1p2}
	\P{E_{n,\eps,\delta}(\eta)^c}\leq{}&\P{\Big|\max_{\eps n\leq i\leq n: \F_i\geq \delta u_n}\frac{\F_i}{u_n}T_\delta^{i/n}(\Pi_n)-\max_{\eps n\leq i\leq n}\frac{\F_i}{u_n}T^{i/n}(\Pi_n)\Big|\geq \eta/2}\\
	&+\P{\Big|\max_{\eps n\leq i\leq n}\frac{\F_i}{u_n}T^{i/n}(\Pi_n)-\max_{ \inn}\frac{\F_i}{u_n}T^{i/n}(\Pi_n)\Big|\geq \eta/2}\\
	=:{}&P_1+P_2.
	\ea\ee 
	We first deal with $P_1$. As in \eqref{eq:d1bound}, we split this into two terms, namely
	\be \ba\label{eq:p1bound}
	P_1\leq{}& \P{\Big|\max_{\eps n\leq i\leq n: \F_i\geq \delta u_n}\frac{\F_i}{u_n}T_\delta^{i/n}(\Pi_n)-\max_{\eps n\leq i\leq n}\frac{\F_i}{u_n}T_\delta^{i/n}(\Pi_n)\Big|\geq \eta/4}\\
	&+\P{\Big|\max_{\eps n\leq i\leq n}\frac{\F_i}{u_n}T_\delta^{i/n}(\Pi_n)-\max_{\eps n\leq i\leq n}\frac{\F_i}{u_n}T^{i/n}(\Pi_n)\Big|\geq \eta/4}.
	\ea\ee 
	To show the first probability tends to zero, we write
	\be 
	\Big|\max_{\eps n\leq i\leq n: \F_i\geq \delta u_n}\frac{\F_i}{u_n}T_\delta^{i/n}(\Pi_n)-\max_{\eps n\leq i\leq n}\frac{\F_i}{u_n}T_\delta^{i/n}(\Pi_n)\Big|\leq \delta \max_{\eps n\leq i\leq n} T^{i/n}_\delta(\Pi_n)\leq \delta T^\eps_\delta(\Pi_n).
	\ee 
	Then, on $F_{n,\eps,\delta}$, $T^\eps_\delta(\Pi_n)$ converges in distribution to $\delta T^\eps_\delta(\Pi)$ by the continuous mapping theorem and the fact that $T^\eps_\delta$ is continuous in $\Pi_n$, as follows from the proof of Lemma \ref{lemma:cont} and remark \ref{rem:cont}. So, as $\delta\downarrow 0$, $T^\eps_\delta(\Pi)\toinp T^\eps(\Pi)$, as follows from the proof of Lemma \ref{lemma:Tdeltaconv}, which implies that $\delta T^\eps_\delta(\Pi)\toinp 0$ as $\delta\downarrow 0$. As before, $\P{F_{n,\eps,\delta}}\to 1$ as $n\to\infty$ and then $\delta\downarrow 0$, so by intersecting the first probability on the right-hand-side of \eqref{eq:p1bound} with $F_{n,\eps,\delta},F_{n,\eps,\delta}^c$, as in \eqref{eq:mappingtoindis}, yields that it tends to zero as $n\to\infty$ and then $\delta\downarrow 0$.  What remains is to show that the second probability on the right-hand-side of \eqref{eq:p1bound} tends to zero as $n$ tends to infinity, then $\delta\downarrow 0$ and finally $\eps \downarrow0$. We again use a similar argument as in \eqref{eq:d1bound} to find
	\be \ba\label{eq:maxftbound}
	\Big|\max_{\eps n\leq i\leq n}\frac{\F_i}{u_n}T_\delta^{i/n}(\Pi_n)-\max_{\eps n\leq i\leq n}\frac{\F_i}{u_n}T^{i/n}(\Pi_n)\Big|&\leq \Big(\max_{\inn} \frac{\F_i}{u_n}\Big)\max_{\eps n\leq i \leq n}(T^{i/n}_\delta(\Pi_n)-T^{i/n}(\Pi_n))\\
	&\leq \Big(\max_{\inn} \frac{\F_i}{u_n}\Big)(T^{\eps}_\delta(\Pi_n)-T^{\eps}(\Pi_n)).
	\ea\ee 
	We show that the product of the maximum and $(T^{\eps}_\delta(\Pi_n)-T^{\eps}(\Pi_n))$ converges to zero in probability as first $n\rightarrow\infty$ and then $ \delta\downarrow0$. We can use the fact that $(T^{\eps}_\delta(\Pi_n)-T^{\eps}(\Pi_n))$ tends to zero in probability as $n\rightarrow\infty$ and then $\delta\downarrow0$, as is shown in the proof of Lemma \ref{lemma:Tdeltaconv}. In order to extend this result to the product of these two random processes, we introduce the events $A_{n,\delta}:=\{\max_{\inn}\F_i/u_n\leq \delta^{-\xi}\}$, for some $\xi\in(0,(2-\alpha)/2)$. Then, splitting the second probability on the right-hand-side of \eqref{eq:p1bound} into two parts by using \eqref{eq:maxftbound} and intersecting with the events $A_{n,\delta}$ and $A_{n,\delta}^c$, we obtain the upper bound
	\be \ba
	\P{\Big|\max_{\eps n\leq i\leq n}\frac{\F_i}{u_n}T_\delta^{i/n}(\Pi_n)-\max_{\eps n\leq i\leq n}\frac{\F_i}{u_n}T^{i/n}(\Pi_n)\Big|\geq \eta/4}\leq{}& \P{T^\eps_\delta(\Pi_n)-T^\eps(\Pi_n) \geq \eta \delta^\xi /4}\\
	&+\P{A_{n,\delta}^c}.
	\ea\ee 
	$\mathbb{P}(A_{n,\delta}^c)$ converges to $\mathbb{P}(A^c_\delta)$, where $A_\delta:=\{Y\leq \delta^{-\xi}\}$ and $Y$ is the distributional limit of \\$\max_{\inn}\F_i/u_n$. Then, as $\delta\downarrow0$, $\P{A_\delta^c}\rightarrow0$, as $Y$ is almost surely finite. Following the steps of the argument in \eqref{eq:Tdeltapinconv} through \eqref{eq:Tdeltapinlim} with $\eta \delta^\xi/4$ instead of $\eta$, we find
	\be\ba
	\limsup_{n\rightarrow\infty}\mathbb{P}(|T^\eps(\Pi_n)-T^\eps_\delta(\Pi_n)|\geq \eta \delta^\xi/4)&\leq \frac{4(1-\eps)}{\eta(\alpha-2)}\delta^{-\xi}(1+\delta^{-\xi})^{-2}+\limsup_{n\rightarrow\infty}\mathbb{P}(\Pi_n(E^\xi_\delta)=0)\\
	&=\frac{4(1-\eps)}{\eta(\alpha-2)}\delta^{-\xi}(1+\delta^{-\xi})^{-2}+\mathbb{P}(\Pi(E^\xi_\delta)=0),
	\ea\ee 
	which tends to zero as $\delta\downarrow0$. It thus follows that $P_1\to0$ as $n\rightarrow \infty$ and then $\delta\downarrow0$. 

	What remains, is to show that $P_2$ tends to zero as $n\rightarrow\infty,\eps\downarrow0$. This follows from a similar approach as in \eqref{eq:d2lim} through \eqref{eq:limfT}. Recall $\ell_k,h_k$ from \eqref{eq:lkhk}. We then divide the set of indices $\inn$ into subsets $A_{k,n}:=\{\inn: i\in(2^{-(k+1)}n,2^{-k}n]\},0\leq k \leq \lfloor \log n/\log 2\rfloor,$ and define the events $\mathcal{A}^\F_{k,n}:=\big\{\max_{i\in A_{k,n}}\F_i/u_n \in (\ell_k,h_k)\big\}$. Using \eqref{eq:fbounds}, it readily follows that
	\be 
	\liminf_{n\to\infty}\P{\mathcal{A}^\F_{k,n}}\geq 1-2k^{-(1+\xi)}.
	\ee 
	Hence, when letting $k_n:=\lfloor\log n/\log 2\rfloor$, 
	\be \label{eq:afknbound}
	\liminf_{n\to\infty}\mathbb{P}\bigg(\bigcap_{K\leq k \leq k_n} \mathcal{A}^\F_{k,n}\bigg) \geq 1-CK^{-\xi},
	\ee 
	for some constant $C>0$, independent of $K$. Similar to \eqref{eq:d2lim}, we write
	\be
	\limsup_{\eps\downarrow 0}\limsup_{n\rightarrow\infty}P_2= \limsup_{K\rightarrow\infty}\limsup_{n\rightarrow\infty}\mathbb{P}\bigg(\sup_{k\geq K}\sup_{i\in A_{k,n}} \frac{\F_i}{u_n}T^{i/n}(\Pi_n)\geq \eta/4\bigg).
	\ee
	Now, by intersecting with a similar event to the one in \eqref{eq:afknbound}, we find the upper bound
	\be \ba\label{eq:supsup}
	 \limsup_{K\rightarrow\infty}\limsup_{n\rightarrow\infty}\mathbb{P}&\bigg(\Big\{\sup_{k\geq K}\sup_{i\in A_{k,n}} \frac{\F_i}{u_n}T^{i/n}(\Pi_n)\geq \eta/4\Big\}\cap  \Big(\bigcap_{\sqrt K\leq k\leq k_n} A^\F_{k,n}\Big)\bigg)\\
	 &+\mathbb{P}\bigg(\bigcup_{\sqrt K\leq k \leq k_n}\hspace{-6pt} \Big(A^\F_{k,n}\Big)^c\bigg).
	\ea\ee  
	By \eqref{eq:afknbound}, it follows that the double limit of the second probability equals zero, so we focus on the first probability. Following the approach in \eqref{eq:Tbound} and \eqref{eq:Ttbound} and using a Markov bound, we bound the first probability on the right-hand-side of \eqref{eq:supsup} from above by
	\be \ba\label{eq:pinbound}
	\frac{4}{\eta}&\mathbb{E}\Big[\sup_{k\geq K}\sup_{i\in A_{k,n}}\frac{\F_i}{u_n}T^{i/n}(\Pi_n) \ind_{\cap_{\sqrt K\leq k \leq k_n}A_{k,n}^\F}\Big]\\
	&\leq \frac{4}{\eta}\mathbb{E}\bigg[\sup_{k\geq K}\sup_{i\in A_{k,n}}  \frac{h_k}{n}\bigg(\sum_{j=i}^{2^{-\sqrt K}n} \big(M_j/u_n\big)^{-1}+\sum_{j=2^{-\sqrt K}n}^n\big(M_j/u_n\big)^{-1} \bigg)\ind_{\cap_{\sqrt K\leq k \leq k_n}A_{k,n}^\F}\bigg],
	\ea\ee
	where we recall that $M_j:=\max_{m\leq j}\F_m$. We then bound the maximum in the second sum from below by considering only the indices $m\leq 2^{-\sqrt{K}}n$ and using the events in the indicator to further bound the maximum from below by $\ell_{\sqrt{K}}$. The terms of the second sum then are independent of $j$, which yields the upper bound $n(\ell_{\sqrt{K}})^{-1}$. We rewrite the first sum, where we note that for $i\in A_{k,n},i\geq 2^{-(k+1)}n$, and as before bound the maximum from below to find 
	\be 
	\sum_{j=i}^{2^{-\sqrt{K}}n}(M_j/u_n)^{-1}\leq \sum_{j\geq \sqrt{K}}^{k+1}\sum_{p\in A_{j,n}}(\ell_{j+1})^{-1}\leq n\sum_{j\geq \sqrt{K}}^{k+1} 2^{-(j+1)}(\ell_{j+1})^{-1}.
	\ee 
	Since, for large $j$, we can bound $(\ell_j)^{-1}$ from above by $2^{j(1/(\alpha-1)+\zeta)}$ for some small $\zeta$, we obtain the upper bound $Cn 2^{(k+1)((2-\alpha)/(\alpha-1)+\zeta)}$, for some constant $C>0$. Note that this upper bound, as well as the upper bound stated above for the second sum in \eqref{eq:pinbound} are deterministic. Hence, using both upper bounds and bounding the indicator in the expectation in \eqref{eq:pinbound} from above by $1$ yields the upper bound
	\be \ba
	C_\eta \sup_{k\geq K}\sup_{i\in A_{k,n}}( h_k 2^{(k+1)((2-\alpha)/(\alpha-1)+\zeta)}+(\ell_{\sqrt{K}})^{-1}h_k)&\leq C_\eta \sup_{k\geq K} 2^{-(k+1)(1-\zeta)}k^\gamma+\ell_{\sqrt K}^{-1}h_k\\
	&=C_\eta 2^{-(K+1)(1-\zeta)}K^\gamma+\ell_{\sqrt K}^{-1}h_K,
	\ea \ee 
	for some $\gamma>0$ and where $C_\eta=(4/\eta)\max\{C,1\}$. This bound no longer depends on $n$, and as we let $K$ tend to infinity the bound tends to zero. This proves $P_2$ tends to zero with $n\to\infty$ and then $\epsilon\downarrow 0$. Combining this result with the convergence of $P_1$ to zero with $n\to\infty$ and then $\delta\downarrow 0$, it follows that the upper bound in \eqref{eq:p1p2} tends to zero, and therefore the two probabilities on the second line of the right-hand-side of \eqref{eq:mappingtoindis} tend to zero with $n\to\infty$, then $\delta\downarrow 0$ and finally $\eps\downarrow 0$. Together with \eqref{eq:contmap}, this yields
	\be 
	\limsup_{n\rightarrow\infty}\P{\max_{\inn}\frac{\F_i}{u_n}T^{i/n}(\Pi_n)\in C}\leq \P{\sup_{(t,f)\in\Pi}fT^t(\Pi)\in C_\eta}.
	\ee 
	Including the limit $\eta\downarrow 0$ finally yields, by the continuity of the probability measure,
	\be 
	\limsup_{n\rightarrow\infty}\mathbb{P}\bigg(\max_{\inn}\frac{\F_i}{u_n}T^{i/n}(\Pi_n)\in C\bigg)\leq \mathbb{P}\bigg(\sup_{(t,f)\in\Pi}fT^t(\Pi)\in C\bigg),
	\ee 
	and applying the Portmanteau lemma \cite[Theorem 13.16]{KleAch13} finishes the proof.
\end{proof} 

	\section{Martingales and concentration}\label{sec:ppp}

	In this section we state and prove several important results, required for the proof of Theorem \ref{Thrm:maxdegree}. As discussed in the overview of the proof of Theorem \ref{Thrm:maxdegree} in Section \ref{sec:overview}, for $\alpha\in(1,2)\cup(2,1+\theta_m)$ the approach to proving Theorem \ref{Thrm:maxdegree} is by showing the maximum degree is concentrated around the maximum of the conditional moments of the degrees and by showing that the latter converges to the right-hand-side of \eqref{eq:ppplimit} and \eqref{eq:infmeanppplimit} when $\alpha\in(1,2),(2,1+\theta_m)$, respectively. To this end, we formulate the following propositions:
	
	\begin{proposition}\label{lemma:condmeanconv} Consider the three PAF models as in Definition \ref{def:paf}. Let $\Pi$ be a Poisson Point Process (PPP) on $(0,1)\times (0,\infty)$ with intensity measure $\nu(\d t,\d x):=\d t\times(\alpha-1) x^{-\alpha}\d x$, and let $\theta_m:=1+\E{\F}/m$. Then, for $\alpha\in(2,1+\theta_m)$,
		\be\label{eq:condmeanconv}
		\max_{i\in[n]}\Ef{}{\zni/u_n}\toindis \max_{(t,f)\in\Pi}f(t^{-1/\theta_m}-1),
		\ee
		while for $\alpha\in (1,2)$,
		\be\label{eq:infcondmeanconv}
		\max_{i\in[n]}\Ef{}{\zni/n}\toindis m\max_{(t,f)\in\Pi} f \int_t^1\bigg(\int_E g\ind_{\{u\leq s\}}\d \Pi(u,g)\bigg)^{-1}\d s.
		\ee 
	\end{proposition}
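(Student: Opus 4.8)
The plan is to derive both statements from the conditional first moment $\mathbb{E}_\F[\cZ_n(i)]=\F_i(c^1_i/c^1_n-1)$ (for $i>n_0$) that comes from the martingale $M^1_n(i)$ in~\eqref{eq:mart}, see~\eqref{eq:cond_moment}; the $i\le n_0$ terms differ from this by $O(c^1_{n_0}/c^1_n)$, which by Lemma~\ref{lemma:ckn} is $\op(u_n)$ when $\alpha\in(2,1+\theta_m)$ and $\op(n)$ when $\alpha\in(1,2)$, so these finitely many vertices may be dropped throughout. Recall $\Pi_n$ from~\eqref{eq:pin} and $\Pi_n\Rightarrow\Pi$.

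For the \emph{strong disorder} case I would first invoke Lemma~\ref{lemma:ckn}, which gives $c^1_nn^{1/\theta_m}\to$ a finite positive limit a.s., hence for each fixed $\eps\in(0,1)$ $\sup_{\eps n\le i\le n}|(c^1_i/c^1_n)(i/n)^{1/\theta_m}-1|\toas0$. Writing $c^1_i/c^1_n-1=((i/n)^{-1/\theta_m}-1)+\gamma_n(i)(i/n)^{-1/\theta_m}$ with $\sup_{\eps n\le i\le n}|\gamma_n(i)|\toas0$, and using $|\max_ia_i-\max_ib_i|\le\max_i|a_i-b_i|$ together with tightness of $\max_{\eps n\le i\le n}\F_i/u_n$, this yields
\[
\max_{\eps n\le i\le n}\frac{\mathbb{E}_\F[\cZ_n(i)]}{u_n}=\sup_{(t,f)\in\Pi_n:\,t>\eps}f\bigl(t^{-1/\theta_m}-1\bigr)+\op(1).
\]
Since $t\mapsto t^{-1/\theta_m}-1$ is continuous and bounded on $[\eps,1]$, the map $\mu\mapsto\sup_{(t,f)\in\mu:t>\eps}f(t^{-1/\theta_m}-1)$ is a.s.\ continuous at $\Pi$ after a routine upper truncation $f\le M$ --- no $\delta$-regularisation as in Lemma~\ref{lemma:cont} is needed here --- so the continuous mapping theorem gives convergence in distribution to $\sup_{(t,f)\in\Pi:t>\eps}f(t^{-1/\theta_m}-1)$, and then $\eps\downarrow0$ recovers $\sup_{(t,f)\in\Pi}f(t^{-1/\theta_m}-1)$. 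The remaining task is to show $i<\eps n$ is negligible in the double limit: for $\Pi$ this is the dyadic argument of~\eqref{eq:d2lim}--\eqref{eq:limfT} (with $\ell_k,h_k$ from~\eqref{eq:lkhk}, \eqref{eq:fbounds}, block $k$ contributes $\lesssim 2^{(k+1)(1/\theta_m-1/(\alpha-1))}$ up to slowly varying factors, which is summable as $1/\theta_m<1/(\alpha-1)$), and for $\Pi_n$ a Markov-bound version of the same decomposition, the key input being that for $\alpha-1>1$ the expected block maximum $\mathbb{E}[\max_{i/n\in[2^{-(k+1)},2^{-k})}\F_i]$ has the order of its typical size $u_{\lfloor2^{-k}n\rfloor}$ --- this is exactly where $\alpha>2$ enters, and why the argument is different from the extreme disorder regime. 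An $\eps$-squeeze then yields~\eqref{eq:condmeanconv}.

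For the \emph{extreme disorder} case I would reduce to Proposition~\ref{prop:infmeanmaxconv}. Observe first that $T^{i/n}(\Pi_n)=\tfrac1n\sum_{j=i}^{n-1}u_n/S_j$ exactly, since $\int_E f\ind_{\{t\le s\}}\,\d\Pi_n(t,f)=S_{\lfloor sn\rfloor}/u_n$. From $\log(1+y)\le y$ applied to the product formula~\eqref{eq:asymp_cnk}, $\log(c^1_i/c^1_n)\le m\sum_{j=i}^{n-1}S_j^{-1}$, and with the convexity bound $e^x-1\le x(e^a-1)/a$ for $0\le x\le a$ (with the a.s.\ finite $a:=m\sum_{j\ge1}S_j^{-1}$) one gets the \emph{global} estimate $\mathbb{E}_\F[\cZ_n(i)]/n\le C_\star\,m\tfrac{\F_i}{u_n}T^{i/n}(\Pi_n)$ for all $i>n_0$, with $C_\star:=(e^a-1)/a<\infty$ a.s. On an intermediate range $\eps n\le i\le(1-\eps)n$ this becomes tight: using that $S_j\ge M_j:=\max_{\ell\le j}\F_\ell$ grows at least like $j^{1/(\alpha-1)}$ up to slowly varying factors (extreme value theory and Borel--Cantelli), the error in $\log(c^1_i/c^1_n)=m\sum_{j=i}^{n-1}S_j^{-1}+(\text{error})$ is of smaller order than the main term uniformly there, and since $m\sum_{j=i}^{n-1}S_j^{-1}\le m\sum_{j\ge\eps n}S_j^{-1}\to0$ the linearisation $e^x-1=x(1+o(1))$ applies, giving $\mathbb{E}_\F[\cZ_n(i)]/n=(1+\op(1))\,m\tfrac{\F_i}{u_n}T^{i/n}(\Pi_n)$ uniformly on that range. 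The two extreme ranges are negligible: $\max_{i<\eps n}\tfrac{\F_i}{u_n}T^{i/n}(\Pi_n)\toinp0$ in the double limit ($n\to\infty$, then $\eps\downarrow0$) is precisely the control of $P_2$ and the removal of the $\eps$-constraint in the proof of Proposition~\ref{prop:infmeanmaxconv}, while for $i>(1-\eps)n$ one has $T^{i/n}(\Pi_n)\le\eps\,u_n/S_{\lceil(1-\eps)n\rceil}\lesssim\eps$. Combining the global bound on the extreme ranges, the tight estimate on the intermediate range, and $m\max_{\inn}\tfrac{\F_i}{u_n}T^{i/n}(\Pi_n)\toindis m\sup_{(t,f)\in\Pi}fT^t(\Pi)$ from Proposition~\ref{prop:infmeanmaxconv}, an $\eps$-squeeze of $\max_{\inn}\mathbb{E}_\F[\cZ_n(i)]/n$ gives~\eqref{eq:infcondmeanconv}.

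The hard part will be the uniform control of the small-index range: even though the amplification factor $(i/n)^{-1/\theta_m}$ (resp.\ $c^1_i/c^1_n-1$) grows as $i$ decreases, one must show the fitness values $\F_i$ there are with overwhelming probability too small to compensate. In the strong disorder case this rests on $\mathbb{E}[\F]<\infty$ \emph{together with} $\alpha>2$, so that expected block maxima have the order of their typical size; in the extreme disorder case it additionally requires reconciling the genuine exponential $e^{m\sum_{j\ge i}S_j^{-1}}-1$ in $c^1_i/c^1_n-1$ with the \emph{linear} functional $T^{i/n}(\Pi_n)$ appearing in Proposition~\ref{prop:infmeanmaxconv}, which is done via the convexity bound above together with the fast growth $S_j\gtrsim j^{1/(\alpha-1)}$.
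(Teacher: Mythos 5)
Your proposal is correct in substance, but it reaches the two statements by routes that differ from the paper's in exactly the technically delicate places, so a comparison is worthwhile. For the strong disorder case the paper does not keep $\eps$ fixed: it works with a shrinking cutoff $\eps_n=n^{-\beta}$, controls $\sup_{i\geq\eps_n n}|(c^1_i/c^1_n)(i/n)^{1/\theta_m}-1|$ quantitatively via the explicit bounds \eqref{eq:c1nupperbound} and the Marcinkiewicz--Zygmund inequality (so that the bound survives multiplication by $\eps_n^{-1/\theta_m}$), and then disposes of $i\leq\eps_n n$ by combining the exact Poisson tail computation \eqref{eq:lawpiprime} (which is where $\alpha<1+\theta_m$ enters) with the Athreya double-array Proposition~\ref{lemma:sequences} applied to $|(c^1_i/c^1_n)(i/n)^{1/\theta_m}-1|$. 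Your fixed-$\eps$ bulk argument plus a dyadic-block/expected-block-maximum bound for $i<\eps n$ achieves the same end more directly; to make it airtight you should note explicitly that the a.s.\ limit of $c^1_nn^{1/\theta_m}$ is strictly positive (this follows from the proof of Lemma~\ref{lemma:ckn}, not its statement), that consequently $c^1_i/c^1_n\leq C_\omega(n/i)^{1/\theta_m}$ uniformly in $i\leq n$ for an a.s.\ finite random $C_\omega$, and that the Markov bound over blocks must be run after intersecting with $\{C_\omega\leq K\}$ since $C_\omega$ and the block maxima are dependent. For the extreme disorder case your argument is genuinely simpler than the paper's: the paper proves the comparison between $c^1_i/c^1_n-1$ and $m\sum_{j\geq i}S_j^{-1}$ over the \emph{full} index range, which for $\alpha\in(1,3/2]$ forces the multi-scale induction over exponents $\rho_k$ in \eqref{eq:summaxK}--\eqref{eq:rhobounds}; your global domination $e^x-1\leq\frac{e^a-1}{a}x$ with $a=m\sum_{j\geq1}S_j^{-1}<\infty$ a.s.\ sidesteps that entirely, since on $i<\eps n$ you only need negligibility of $\max_{i<\eps n}\frac{\F_i}{u_n}T^{i/n}(\Pi_n)$, which is exactly the $P_2$ estimate established inside the proof of Proposition~\ref{prop:infmeanmaxconv}, while on $i\geq\eps n$ the linearisation is easy because $m\sum_{j\geq\eps n}S_j^{-1}\toas0$ and the additive error $\max_i\frac{\F_i}{n}\big|(c^1_i/c^1_n-1)-m\sum_{j\geq i}S_j^{-1}\big|\lesssim\frac{u_n}{n}(\eps n)^{-2((2-\alpha)/(\alpha-1)-\eps')}\toinp0$ (so you do not even need the relative-error claim, nor the cutoff at $(1-\eps)n$). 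In short: the paper buys a self-contained, fully quantitative treatment at the cost of the $\rho_k$ bootstrap; your version buys brevity by reusing the small-index control already proved for Proposition~\ref{prop:infmeanmaxconv} and by the convexity trick, at the cost of a few uniformity details (positivity of the limit of $c^1_nn^{1/\theta_m}$, the random-constant comparisons) that must be spelled out.
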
 
	
	\begin{proposition}\label{lemma:concentration}
		Consider the three PAF models as in Definition \ref{def:paf}. When $\alpha\in(2,1+\theta_m)$, for any $\eta>0$,
		\be \label{eq:conc}
		\lim_{n\rightarrow \infty}\P{\Big|\max_{i\in[n]}\zni-\max_{i\in[n]}\Ef{}{\zni}\Big|>\eta u_n}=0.
		\ee 
		Similarly, when $\alpha\in(1,2)$, for any $\eta>0$,
		\be \label{eq:infmeanconc}
		\lim_{n\rightarrow \infty}\P{\Big|\max_{i\in[n]}\zni-\max_{i\in[n]}\Ef{}{\zni}\Big|>\eta n}=0.
		\ee 
	\end{proposition}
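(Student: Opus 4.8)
The plan is to prove \eqref{eq:conc} and \eqref{eq:infmeanconc} simultaneously, writing $a_n=u_n$ in the strong disorder case $\alpha\in(2,1+\theta_m)$ and $a_n=n$ in the extreme disorder case $\alpha\in(1,2)$. Since $\bigl|\max_i\cZ_n(i)-\max_i\Ef{}{\cZ_n(i)}\bigr|\le\max_i\bigl|\cZ_n(i)-\Ef{}{\cZ_n(i)}\bigr|$, it suffices to show that $\P{\max_{i\in[n]}|\cZ_n(i)-\Ef{}{\cZ_n(i)}|>\eta a_n}\to 0$ as $n\to\infty$ for every $\eta>0$. Set $W_n(i):=\cZ_n(i)-\Ef{}{\cZ_n(i)}$; using the martingale $M^1_n(i)$ from \eqref{eq:mart} together with $\Ef{}{\cZ_n(i)}=\F_i(c^1_i/c^1_n-1)$ from \eqref{eq:cond_moment}, one has $W_n(i)=(c^1_n)^{-1}\bigl(M^1_n(i)-M^1_{i\vee n_0}(i)\bigr)$, so that $W_n(i)$ is a $\Pf{}$-martingale in $n$ (started at $0$) up to the deterministic time change by $c^1_n$. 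The argument is then a union bound over $i\in[n]$, once a sufficiently strong conditional moment estimate for $W_n(i)$ is in place.

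The moment estimate combines the raw moments of $\cZ_n(i)$ with the quadratic variation of the martingale. First, the martingale property of $M^k_n(i)$ gives, for $i>n_0$,
\[
\Ef{}{\binom{\cZ_n(i)+\F_i+k-1}{k}}=\frac{c^k_i}{c^k_n}\binom{\F_i+k-1}{k},
\]
with a harmless correction for $i\le n_0$; hence, using $\binom{x+k-1}{k}\asymp x^k$ for $x$ large and the asymptotics $c^k_i/c^k_n\asymp (n/i)^{k/\theta_m}$ from Lemma~\ref{lemma:ckn} (resp.\ $c^k_i/c^k_n=O(1)$ when $\alpha\in(1,2)$), one gets $\Ef{}{\cZ_n(i)^k}\le C_k(n/i)^{k/\theta_m}\Gamma(\F_i+k)/\Gamma(\F_i)$. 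To obtain a \emph{centred} estimate that is genuinely smaller, I would bound the predictable quadratic variation of $N_n(i):=M^1_n(i)-M^1_{i\vee n_0}(i)$, whose conditional increments are of order $c^1_j(\cZ_j(i)+\F_i)/j$, and feed it together with the raw moment bounds into the Burkholder--Davis--Gundy inequality. This should yield, for a fixed integer $k$, an estimate of the shape
\[
\Ef{}{|W_n(i)|^k}\le C_k\,(n/i)^{k/\theta_m}\Bigl(\F_i^{k/2}+\F_i^{k}i^{-k/2}\Bigr),
\]
and an analogue with $a_n=n$ in the extreme disorder case; the factor $\F_i^{k/2}$ (rather than $\F_i^k$) reflects that the degree of a vertex of fitness $\F_i$ concentrates around its conditional mean on the relative scale $\F_i^{-1/2}$. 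A Doob $L^k$-inequality upgrades this to a bound on $\sup_{n'\ge i}|W_{n'}(i)|$ if needed.

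It remains to run the union bound: by Markov, conditionally on the fitness, $\Pf{|W_n(i)|>\eta a_n}\le C_k(n/i)^{k/\theta_m}(\F_i^{k/2}+\F_i^{k}i^{-k/2})/(\eta a_n)^k$, so one needs $\sum_{i\in[n]}(n/i)^{k/\theta_m}(\F_i^{k/2}+\F_i^{k}i^{-k/2})=o(a_n^k)$ with high probability. I would organise this sum according to the size of $\F_i$: the part over vertices of moderate fitness is handled using $\E{\F^{k/2}}<\infty$ (which is finite for a suitable integer $k$ since $k<2(\alpha-1)$ can be arranged in the strong disorder regime) together with $\sum_i (n/i)^{k/\theta_m}$, while the finitely many exceptionally large fitness values are treated separately, using the point process $\Pi_n$ and the fact that $\max_{i\le n}\F_i$ is of order $u_n$ with high probability (cf.\ Section~\ref{sec:infmean}). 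In the strong disorder case the decisive inequality is $\alpha-1<\theta_m$, i.e.\ $1/(\alpha-1)>1/\theta_m$, which makes every power of $n$ appearing — after division by $a_n^k=u_n^k$ — carry a strictly negative exponent; in the extreme disorder case $a_n=n$ is much larger than the degree of any fixed vertex and $c^1_n$ converges without rescaling, so a cruder estimate already suffices.

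The main obstacle is precisely the centred moment bound. The naive estimate $\Ef{}{|W_n(i)|^k}\le 2^k\bigl(\Ef{}{\cZ_n(i)^k}+(\Ef{}{\cZ_n(i)})^k\bigr)\asymp (n/i)^{k/\theta_m}\F_i^k$ is too weak: writing $g_i:=\F_i(n/i)^{1/\theta_m}$, summing it over the bulk of vertices gives something of order $u_n^{k-1}\sum_i g_i$, and $\sum_i g_i$ is of order $n$, so the total is of order $u_n^{k-1}n\gg u_n^k$ whenever $\alpha>2$. One therefore genuinely needs the $\F_i^{k/2}$ gain coming from the quadratic variation, which forces one to combine the raw $M^k_n(i)$-moments with a BDG estimate while keeping track of the almost sure behaviour of $c^k_n$ from Lemma~\ref{lemma:ckn}; and, since $\E{\F^{k/2}}=\infty$ once $k\ge 2(\alpha-1)$, the contribution of the extremal fitness values must still be controlled by conditioning on the fitness landscape and invoking the point-process estimates of Section~\ref{sec:infmean}.
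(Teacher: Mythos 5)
Your reduction (bounding the difference of maxima by the maximum of differences, then a union bound over $i$ with conditional moment estimates) is the same starting point as the paper, but from there you take a genuinely different and much heavier route, and its pivotal step is asserted rather than proved. Everything hinges on the centred bound $\Ef{}{|W_n(i)|^k}\lesssim (n/i)^{k/\theta_m}\big(\F_i^{k/2}+\F_i^{k}i^{-k/2}\big)$, which you introduce with ``this should yield''. You correctly diagnose that the naive $\F_i^k$ bound is useless, and the $\F_i^{k/2}$ gain is morally right (for $k=2$ it follows from the predictable quadratic variation of $M^1_n(i)$ together with the first-moment formula), but for the exponents you actually need ($k>\alpha-1$, so possibly $k>2$) you would have to run a full Rosenthal/BDG argument, feeding in raw-moment bounds from the $M^{k/2}$-martingales of Lemma~\ref{lemma:martingale} and the $c^k_n$ asymptotics of Lemma~\ref{lemma:ckn}, none of which is carried out; and you would then still have to control the vertices of exceptional fitness, where $\E{\F^{k/2}}=\infty$ gives nothing and a dyadic decomposition of the fitness landscape (in the style of the $\ell_k,h_k$ envelopes of Section~\ref{sec:infmean}) is required, including for the vertex that realises the maximum itself. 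Note also that your stated constraint $k<2(\alpha-1)$ must be complemented by $k>\alpha-1$ (so that the bulk contribution, of order $n^{\max(1,k/\theta_m)}\E{\F^{k/2}}$, is $o(u_n^k)$); such an integer exists because the interval $(\alpha-1,2(\alpha-1))$ has length $\alpha-1>1$, but the inequality $\alpha-1<\theta_m$ alone, which you call decisive, does not deliver it.

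The paper sidesteps all of this with an algebraic cancellation instead of a martingale-concentration gain. It bounds the centred $2k$-th moment by $2^{2k-1}\big(\Ef{}{X_n(i)^{2k}}-\Ef{}{X_n(i)}^{2k}\big)$ with $X_n(i)=\zni+\F_i$, computes the raw moment through $M^{2k}_n(i)$, and uses $c^{2k}_{i\vee n_0}/c^{2k}_n\le (c^1_{i\vee n_0}/c^1_n)^{2k}$, so that the top-order term $\F_i^{2k}$ cancels exactly and only powers $\F_i^{\ell}$ with $\ell\le 2k-1$ survive; Markov plus H\"older with the fractional exponent $p/(2k)$, $p=(1+\eps)(\alpha-1)$, then reduces the fitness moment actually needed to order $p(1-1/(2k))<\alpha-1$, finite under Assumption~\ref{Ass:powerlaw} alone. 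No BDG, no separate treatment of extreme fitness values, and the sum over $i$ closes because $p>\alpha-1$ and $\theta_m>\alpha-1$. For the extreme-disorder case your ``cruder estimate'' is essentially what the paper does (union bound, Chebyshev, variance of $M^1_n(i)$ decomposed into increments whose conditional variances sum over $i$ to at most $m$ per step, giving a total of order $n/(\eta n c^1_n)^2\to0$), so that part is fine. In summary: a viable outline, genuinely different from the paper, but the central centred-moment estimate and the extremal-fitness bookkeeping constitute the actual proof in your approach, and they are missing.
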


	Before we prove these two propositions, we introduce a family of useful martingales and derive some of their properties. We define, for $k\in\R,n,n_0,m,m_0\in\N$ and $a,b>-1$ such that $a-b>-1$,
	\be \ba\label{eq:c1n}
	c^k_n(m)&:=\prod_{j=n_0}^{n-1}\prod_{\ell=1}^m \Big(1-\frac{k}{m_0+m(j-n_0)+k+(\ell-1)+S_j}\Big), \\ 
	\wt c^k_n(m)&:=\prod_{j=n_0}^{n-1} \Big(1-\frac{k}{m_0+m(j-n_0)+k+S_j}\Big)^m, \qquad {a \choose b}:=\frac{\Gamma(a+1)}{\Gamma(b+1)\Gamma(a-b+1)}.
	\ea\ee 
	For ease of writing, we omit the $(m)$ in $c^k_n(m),\wt c^k_n(m)$ whenever there is no ambiguity. We can then formulate the following lemma:
	
	\begin{lemma}\label{lemma:martingale}
	Let $i\in\N,k\geq -\min(\F_i,1)$. For the PAFRO model ($m=1$) and the PAFUD model with out-degree $m\in\N$, the random variable
	\be 
	M^k_n(i):=c_n^k(m){\zni+\F_i+(k-1) \choose k}
	\ee
	is a martingale with respect to $\G_{n-1}$ for $n\geq i\vee n_0$, under the conditional probability measure $\Pf{\cdot}$. For the PAFFD model with out-degree $m\in\N$, the random variable
	\be 
	\wt M^k_n(i):=\wt c_n^k(m){\zni+\F_i+(k-1) \choose k}
	\ee 
	is a supermartingale (resp.\ submartingale) with respect to $\G_{n-1}$ for $n\geq i\vee n_0$, under the conditional probability measure $\Pf{\cdot}$ when $k\geq 0$ (resp. $k\in(-\min(\F_i,1),0)$. Finally, for the PAFFD model, $M^1_n(i)$ is a martingale with respect to $\G_{n-1}$ for $n\geq i\vee n_0$ under the conditional probability measure $\Pf{\cdot}$.
	\end{lemma}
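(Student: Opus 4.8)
The plan is to reduce every assertion to a single one-step computation of $\Ef{}{\binom{\Zm_{n+1}(i)+\F_i+(k-1)}{k}\mid\G_n}$ and then to read off the correct normalising sequence. Two elementary identities for generalized binomial coefficients drive everything: the Pascal recursion $\binom{a+1}{k}=\binom{a}{k}+\binom{a}{k-1}$ and the ratio $\binom{a}{k-1}/\binom{a}{k}=k/(a-k+1)$, both valid whenever the quantities are defined since they follow from $\Gamma(z+1)=z\Gamma(z)$. Together they give $\binom{a+1}{k}=\bigl(1+\tfrac{k}{a-k+1}\bigr)\binom{a}{k}$ and, iterating, $\binom{a+j}{k}=\binom{a}{k}\prod_{\ell=0}^{j-1}\bigl(1+\tfrac{k}{a-k+1+\ell}\bigr)$. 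Writing $a:=\zni+\F_i+(k-1)$, so that $a-k+1=\zni+\F_i>0$, all of these are available on the stated range of $k$. Adaptedness is clear since $\zni$ is $\G_n$-measurable; under $\Pf{\cdot}$ the quantities $c^k_n(m),\wt c^k_n(m)$ and the partial sums $S_j$ are deterministic, and integrability is trivial since $\zni$ (hence the binomial coefficient) is bounded for fixed $n$.

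\textbf{PAFRO ($m=1$) and PAFUD.} For PAFRO the increment $\Delta\zni$ is Bernoulli with mean $p_i=(\zni+\F_i)/D_n$, $D_n:=m_0+(n-n_0)+S_n$, so that $\Ef{}{\binom{a+\Delta\zni}{k}\mid\G_n}=p_i\binom{a+1}{k}+(1-p_i)\binom{a}{k}=(1+k/D_n)\binom{a}{k}$ by the identities above. For PAFUD I would attach the $m$ half-edges one at a time and apply the tower rule: conditionally on the graph together with the first $j-1$ new half-edges, the $j$-th half-edge attaches to $i$ with probability $(\Zm_{n,j-1}(i)+\F_i)/D_{n,j-1}$, $D_{n,j-1}:=m_0+m(n-n_0)+(j-1)+S_n$, so each sub-step multiplies the conditional expectation of the binomial coefficient by $1+k/D_{n,j-1}$. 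This yields
\[ \Ef{}{\tbinom{\Zm_{n+1}(i)+\F_i+(k-1)}{k}\,\big|\,\G_n}=\tbinom{a}{k}\prod_{\ell=0}^{m-1}\Big(1+\tfrac{k}{m_0+m(n-n_0)+\ell+S_n}\Big). \]
Since $(1+k/x)^{-1}=1-k/(x+k)$, the reciprocal of the right-hand product equals $c^k_{n+1}(m)/c^k_n(m)$ (just reindex the inner product of the definition of $c^k_n(m)$ by $\ell$), and hence $\Ef{}{M^k_{n+1}(i)\mid\G_n}=M^k_n(i)$; the case $m=1$ recovers PAFRO.

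\textbf{PAFFD.} Now $\Delta\zni\sim\mathrm{Binomial}(m,p_i)$ with $p_i=(\zni+\F_i)/D_n$, $D_n:=m_0+m(n-n_0)+S_n$, the denominator being frozen. Using the iterated Pascal identity and the monotonicity in $\ell$ of the (positive) factors $1+k/(\zni+\F_i+\ell)$, I would bound $\binom{a+j}{k}\le\binom{a}{k}(1+k/(\zni+\F_i))^j$ when $k\ge 0$, and with the inequality reversed when $k\in(-\min(\F_i,1),0)$ (there one uses that $\binom{a}{k}>0$ in this range). Summing against the binomial weights, the binomial theorem together with $p_i\cdot k/(\zni+\F_i)=k/D_n$ gives
\[ \Ef{}{\tbinom{\Zm_{n+1}(i)+\F_i+(k-1)}{k}\,\big|\,\G_n}\ \lessgtr\ \tbinom{a}{k}\Big(1+\tfrac{k}{D_n}\Big)^m, \]
with ``$\le$'' for $k\ge 0$ and ``$\ge$'' for $k\in(-\min(\F_i,1),0)$; since $\wt c^k_{n+1}(m)/\wt c^k_n(m)=(1+k/D_n)^{-m}$ this is exactly the super- (resp.\ sub-) martingale property of $\wt M^k_n(i)$. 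For $k=1$ the map $x\mapsto\binom{x}{1}=x$ is affine, so the bound becomes an equality, $\Ef{}{\Zm_{n+1}(i)+\F_i\mid\G_n}=(\zni+\F_i)(1+m/D_n)$; since $(1+1/D_n)^m\neq 1+m/D_n$ the tilde normalisation is no longer correct, but $c^1_{n+1}(m)/c^1_n(m)=\prod_{\ell=1}^m\tfrac{D_n+\ell-1}{D_n+\ell}=\tfrac{D_n}{D_n+m}=(1+m/D_n)^{-1}$ telescopes, yielding $\Ef{}{M^1_{n+1}(i)\mid\G_n}=M^1_n(i)$.

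The main obstacle is the PAFFD case: one must pin down the direction of the inequality as a function of the sign of $k$ — this rests on the monotonicity of the iterated-Pascal factors and on checking that they, and $\binom{a}{k}$, stay positive throughout the stated range — and one must notice that $k=1$ is genuinely exceptional, the product over the $m$ half-edges telescoping against $c^1_n(m)$ but not against $\wt c^1_n(m)$. Everything else — the initialisation $c^k_{n_0}(m)=\wt c^k_{n_0}(m)=1$, the trivial integrability and adaptedness, and the mild boundary bookkeeping when $k=-\min(\F_i,1)$ — is routine.
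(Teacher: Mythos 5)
Your proposal is correct and follows essentially the same route as the paper's proof: a one-step conditional expectation of the generalized binomial coefficient for PAFRO, iterated conditioning over the $m$ half-edges for PAFUD, the monotonicity bound combined with the binomial probability generating function for PAFFD (with the inequality reversing for negative $k$), and the linearity of $x\mapsto\binom{x}{1}$ to recover the exact martingale $M^1_n(i)$ for PAFFD. The only difference is cosmetic: you phrase the key identity via Pascal-type recursions for $\binom{a}{k}$, whereas the paper manipulates the Gamma-function ratios directly, which amounts to the same computation.
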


	\begin{proof}
	For ease of writing, let us define $X_n(i):=\zni+\F_i$ and $\Delta X_n(i):=X_{n+1}(i)-X_n(i)=\Delta \zni$. For  the PAFRO model, using \eqref{eq:c1n},
	\be\ba \label{eq:meanmkn}
	\Ef{}{M^k_{n+1}(i)\ \big|\ \G_n}&=c_{n+1}^k\Ef{\bigg}{{X_{n+1}(i)+(k-1) \choose k}\ \bigg|\ \G_n}\\
	&=c_{n+1}^k\Ef{\bigg}{{X_n(i)+(k-1) \choose k}\frac{\Gamma(X_{n+1}(i)+k)}{\Gamma(X_n(i)+k)}\frac{\Gamma(X_n(i))}{\Gamma(X_{n+1}(i))}\ \bigg|\ \G_n}\\
	&=c_{n+1}^k{X_{n}(i)+(k-1) \choose k}\Ef{\bigg}{1+\Delta X_n(i)\frac{k}{X_n(i)} \ \Big|\ \G_n},
	\ea\ee
	as $\Delta X_n(i)$ is either $0$ or $1$. Then, taking the expected value of $\Delta X_n(i)$ yields
	\be\ba 
	\Ef{}{M^k_{n+1}(i)\ \big|\ \G_n}&=c_{n+1}^k{X_{n}(i)+(k-1) \choose k}\left(1+\frac{X_n(i)}{m_0+(n-n_0)+S_n}\frac{k}{X_n(i)}\right)=M^k_n(i),
	\ea\ee
	as $c^k_{n+1}(1+k/(m_0+(n-n_0)+S_n))=c^k_n$. Note that the conditional mean of $M^k_n(i)$ is finite almost surely as well. For the PAFFD model with out-degree $m\in\N$, we can follow the same steps to find
	\be \ba\label{eq:paffdmart}
	\Ef{}{\wt M^k_{n+1}(i)\ \big|\ \G_n}&=\wt c_{n+1}^k{X_n(i)+(k-1) \choose k}\Ef{\bigg}{\frac{\Gamma(X_{n+1}(i)+k)}{\Gamma(X_n(i)+k)}\frac{\Gamma(X_n(i))}{\Gamma(X_{n+1}(i))}\ \bigg|\ \G_n}\\
	&=\wt c_{n+1}^k{X_n(i)+(k-1) \choose k}\mathbb{E}_{\F}\bigg[\prod_{\ell=0}^{\Delta X_n(i)-1}\frac{X_n(i)+k+\ell}{X_n(i)+\ell}\ \bigg|\ \G_n\bigg]\\
	&\leq \wt c_{n+1}^k{X_n(i)+(k-1) \choose k}\mathbb{E}_{\F}\bigg[\bigg(\frac{X_n(i)+k}{X_n(i)}\bigg)^{\Delta X_n(i)}\ \bigg|\ \G_n\bigg],
	\ea\ee 
	where we use Gamma function's properties in the second line and note that $x\mapsto (x+k)/x$ is decreasing in $x$ for $k\geq 0$ in the last step. For $k\in(-\min(\F_i,1),0)$ the upper bound becomes a lower bound, as $x\mapsto (x+k)/x$ is decreasing in $x$ in that case. Conditional on $\G_n$, the number of edges vertex $n+1$ connects to $i$ is a binomial random variable with $m$ trials and success probability $X_n(i)/\sum_{j=1}^n X_n(j)$, so
	\be \ba
	\mathbb{E}_{\F}\bigg[\bigg(\frac{X_n(i)+k}{X_n(i)}\bigg)^{\Delta X_n(i)}\ \bigg|\ \G_n\bigg]&=\bigg(1+\frac{k}{\sum_{j=1}^nX_n(j)}\bigg)^m,
	\ea\ee 
	where we use that a random variable $X\sim \mathrm{Bin}(m,p)$ has probability generating function $\E{z^X}=(pz+(1-p))^m$, $z\in\R$. Then, recalling that for the PAFFD model $\sum_{i=1}^n X_n(i)=m_0+m(n-n_0)+S_n$ yields the result. For  the PAFUD model, we require a few more steps. As the connection of the $i^{\text{th}}$ edge of vertex $n+1$ is dependent on the connection of edges $1,\ldots,i-1$, we iteratively condition on $\G_{n,j},j=m-1,m-2,\ldots,0$, the graph with $n$ vertices where the $n+1^{\text{st}}$ vertex has connected $j$ of its half-edges to the vertices $1,\ldots,n$. More precisely, letting $X_{n,j}:=\Zm_{n,j}(i)+\F_i$, we write
	\be \ba
	\Ef{}{M^k_{n+1}(i)\ \big|\ \G_n}&= c_{n+1}^k\Ef{\bigg}{\E{{X_{n+1,0}(i)+(k-1) \choose k}\ \bigg|\ \G_{n,m-1}}\bigg|\G_n}\\
	&=  c_{n+1}^k\Ef{\bigg}{\E{{X_{n,m-1}(i)+\ind_{n+1,m,i}+(k-1) \choose k}\ \bigg|\ \G_{n,m-1}}\bigg|\G_n},
	\ea\ee
	where $\ind_{n+1,m,i}$ is the indicator of the event that the $m^{\text{th}}$ half-edge of vertex $n+1$ connects with vertex $i$. Now, as in \eqref{eq:meanmkn}, we write this as
	\be\ba
	\Ef{}{M^k_{n+1}(i)\ \big|\ \G_n}= c_{n+1}^k\Ef{\bigg}{{X_{n,m-1}(i)+(k-1) \choose k}\Big(1+k\frac{\E{\ind_{n+1,m,i}\;|\;\G_{n,m-1}}}{X_{n,m-1}(i)}\Big)\bigg|\G_n}.
	\ea\ee 
	By the definition of  the PAFUD model, the mean of the indicator equals \\ $X_{n,m-1}(i)/\sum_{j=1}^n X_{n,m-1}(j)=X_{n,m-1}(i)/(m_0+m(n-n_0)+(m-1)+S_n)$. Hence, we obtain
	\be 
	\Ef{}{ M^k_{n+1}(i)\ \big|\ \G_n}= c_{n+1}^k \Big(1+\frac{k}{m_0+m(n-n_0)+(m-1)+S_n}\Big)\Ef{\bigg}{{X_{n,m-1}(i)+(k-1) \choose k}\bigg|\G_n},
	\ee
	which, when iteratively following the same steps by conditioning on $\G_{n,j}$ for $j=m-2,\ldots,0$, yields the required result. Finally, we prove that $M^1_n(i)$ is a martingale in the PAFFD model. We repeat the steps in \eqref{eq:paffdmart}, but note that as $k=1$, we can omit the inequality and obtain
	\be 
	\Ef{}{M^1_{n+1}(i)\, \big|\, \G_n}=c^1_{n+1}(m)X_n(i)(1+\Ef{}{\Delta X_n(i)\,\big|\,\G_n}/X_n(i)).
	\ee 
	As before, we note that $\Delta X_n(i)$ is a binomial random variable with mean $mX_n(i)/\sum_{j=1}^n X_n(j)$. Thus,
	\be \ba
	\Ef{}{M^1_{n+1}(i)\, \big|\, \G_n}&=c^1_{n+1}(m)X_n(i)\Big(1+\frac{m}{m_0+m(n-n_0)+S_n}\Big)=c^n_{n}(m)X_n(i)=M^1_n(i),
	\ea\ee 
	which finishes the proof.	
	\end{proof}

	From Lemma \ref{lemma:martingale}, we immediately conclude that the (super)martingales $M^k_n(i),\wt M^k_n(i)$ converge almost surely, as they are non-negative, to some random variables $\xi^k_i,\wt \xi^k_i$, respectively. To use this in a meaningful way, we look more closely at this almost sure limit, showing that it does not have an atom at zero, and we study the growth rate of the sequences $c^k_n,\wt c^k_n$. We first dedicate a lemma to the latter:

	\begin{lemma}\label{lemma:ckn}
	Consider the sequences $c^k_n,\wt c^k_n$ in \eqref{eq:c1n} and recall $\theta_m:=1+\E{\F}/m$. If $\E{\F^{1+\eps}}<\infty$ for some $\eps>0$, then for any $k\in\R,m\in\N$,
	\be \label{eq:cknconv}
	c^k_n(m) n^{k/\theta_m}\toas c_k(m),\qquad \wt c^k_n(m)n^{k/\theta_m}\toas \wt c_k(m),
	\ee 
	for some almost surely finite random variables $c_k(m),\wt c_k(m)$. When the fitness distribution satisfies Assumption \ref{Ass:powerlaw} with $\alpha\in(1,2)$, for any $k\in\R,m\in\N$,
	\be \label{eq:infmeancknconv}
	c^k_n\toas \underline{c}_k(m), \qquad \wt c^k_n \toas \underline{\wt c}_k(m),
	\ee 
	for some almost surely finite random variables $\underline{c}_k(m),\underline{\wt c}_k(m)$ (again omitting the $(m)$ whenever there is no ambiguity). Furthermore, the following upper and lower bounds hold almost surely for $c^k_n(m)$ when $\alpha>2$ (they hold for $\wt c^k_n(m)$ as well). For $n_0+1\leq i\leq n$,
	\be\ba\label{eq:c1nupperbound}
	\frac{c^k_i(m)}{c^k_n(m)}\Big(\frac{i}{n}\Big)^{k/\theta_m}\leq{}&\exp\bigg\{\frac{k}{\theta_m}\log\Big(\frac{i}{n}\frac{n-(n_0+1)}{(i-(n_0+1))\vee 1}\Big)+\frac{mk}{\E{\F}}\sum_{j=i}^\infty \frac{|S_j/j-\E{\F}|}{m_0+m(j-n_0)+S_j}\bigg\},\\
	\frac{c^k_i(m)}{c^k_n(m)}\Big(\frac{i}{n}\Big)^{k/\theta_m}\geq{}& 1-\frac{mk}{\E{\F}}\sum_{j=i}^{n-1} \frac{|S_j/j-\E{\F}|}{m_0+m(j-n_0)+S_j}-\frac{m}{2}\sum_{j=i}^{n-1}\Big(\frac{k}{S_j}\Big)^2\\
	&-\frac{m_0+\E{\F}n_0+(m-1)}{\theta_m^2}\frac{\pi^2}{6i}-\frac{1}{\theta_m((i-(n_0+1))\vee 1)}.
	\ea\ee
	\end{lemma}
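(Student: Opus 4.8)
I would work throughout with logarithms. Setting $D_{j,\ell}:=m_0+m(j-n_0)+k+(\ell-1)+S_j$, we have $\log c^k_n(m)=\sum_{j=n_0}^{n-1}\sum_{\ell=1}^m\log(1-k/D_{j,\ell})$, and since for large $j$ each $k/D_{j,\ell}$ is small, the expansion $\log(1-x)=-x-\tfrac12 x^2-\cdots$ reduces everything to the behaviour of $\sum_j\sum_\ell k/D_{j,\ell}$ together with absolutely convergent remainders of order $\sum_j(k/S_j)^2$. The identical analysis applies to $\wt c^k_n(m)$, which differs only through the harmless bounded shift $(\ell-1)$ in the denominator, so I would carry out the argument for $c^k_n(m)$ and simply remark that $\wt c^k_n(m)$ is the same.

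\textbf{Proof of \eqref{eq:cknconv} and \eqref{eq:infmeancknconv}.} When $\E{\F^{1+\eps}}<\infty$ the SLLN gives $S_j/j\to\E{\F}$, so $D_{j,\ell}\sim m\theta_m j$ and $\sum_{\ell=1}^m k/D_{j,\ell}\sim k/(\theta_m j)$; summing in $j$ produces the main term $\tfrac{k}{\theta_m}\log n$, which is precisely the normalisation in \eqref{eq:cknconv}. To upgrade this to almost sure convergence of $\log c^k_n(m)+\tfrac{k}{\theta_m}\log n$ I would write $D_{j,\ell}=m\theta_m j+a_{j,\ell}+E_j$ with $a_{j,\ell}$ bounded and $E_j:=S_j-j\E{\F}$; after the expansion the only contribution not obviously summable is, up to constants, $\sum_j E_j/j^2$, and I would show $\sum_j E_j/j^2<\infty$ a.s.\ by applying Kolmogorov's three-series theorem to $\sum_i(\F_i-\E{\F})/i$ (equivalently Kronecker's lemma together with the Marcinkiewicz--Zygmund SLLN), the truncated tail, mean and variance series all converging precisely because $\E{\F^{1+\eps}}<\infty$. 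This yields $c^k_n(m)n^{k/\theta_m}\to c_k(m)$ with $c_k(m)$ a.s.\ finite and positive (for $k>0$ the product $c^k_n(m)$ is decreasing with factors in $(0,1)$ and its logarithm does not diverge to $-\infty$; $k<0$ is symmetric). For \eqref{eq:infmeancknconv}, where $\alpha\in(1,2)$ and $\E{\F}=\infty$, the sum $S_j$ grows like $j^{1/(\alpha-1)}$ with $1/(\alpha-1)>1$, so $D_{j,\ell}\ge S_j\ge M_j:=\max_{\ell\le j}\F_\ell$ dominates $m(j-n_0)$; here $\log c^k_n(m)$ converges as soon as $\sum_j 1/S_j<\infty$ a.s., which follows from the estimate $\E{1/(m_0+M_j)}\le 2 j^{-1/(\alpha-1)+\eps}$ already established in the proof of Theorem~\ref{Thrm:pkasymp}(iii) (see \eqref{eq:Mibound}--\eqref{eq:Miprobbound}) upon choosing $\eps<\tfrac{2-\alpha}{\alpha-1}$ and applying Tonelli. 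The quadratic remainders are even smaller, so $c^k_n(m)$ converges a.s.

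\textbf{Proof of \eqref{eq:c1nupperbound}.} Here $\tfrac{c^k_i(m)}{c^k_n(m)}=\prod_{j=i}^{n-1}\prod_{\ell=1}^m(1-k/D_{j,\ell})^{-1}$, and the natural deterministic comparison is with $m\theta_m(j-n_0)$: write $D_{j,\ell}=m\theta_m(j-n_0)+E_j+b_{j,\ell}$ with $b_{j,\ell}:=m_0+\E{\F}n_0+k+(\ell-1)$ bounded and $E_j=S_j-j\E{\F}$. For the upper bound I would use $(1-x)^{-1}\le e^{x/(1-x)}$, so $\log\tfrac{c^k_i}{c^k_n}\le\sum_{j=i}^{n-1}\sum_{\ell=1}^m\tfrac{k}{D_{j,\ell}-k}$; isolating the main piece $\sum_{j=i}^{n-1}\tfrac{k}{\theta_m(j-n_0)}$ and bounding it by $\tfrac{k}{\theta_m}\log\tfrac{n-(n_0+1)}{(i-(n_0+1))\vee1}$ via $\tfrac1l\le\log\tfrac{l}{l-1}$, then adding the factor $\tfrac{k}{\theta_m}\log\tfrac in$ from $(i/n)^{k/\theta_m}$, produces the $\log(\cdots)$ term, while the discrepancy between $\sum_\ell\tfrac{k}{D_{j,\ell}-k}$ and $\tfrac{k}{\theta_m(j-n_0)}$ is dominated by $\tfrac{mk}{\E{\F}}\tfrac{|S_j/j-\E{\F}|}{m_0+m(j-n_0)+S_j}$. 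For the lower bound I would instead use $-\log(1-x)\ge x$ and $-\log(1-x)-x\le\tfrac{x^2}{2(1-x)}$, which produces the $\tfrac m2\sum_{j=i}^{n-1}(k/S_j)^2$ correction; the comparison $\sum_{j=i}^{n-1}\tfrac1j-\log\tfrac ni\le\tfrac12\sum_{j\ge i}\tfrac1{j^2}\le\tfrac{\pi^2}{6i}$ (using that $i\mapsto i\sum_{j\ge i}1/j^2$ is nonincreasing, hence bounded by $\pi^2/6$) together with the bounded shift $b_{j,\ell}$ gives the $\tfrac{m_0+\E{\F}n_0+(m-1)}{\theta_m^2}\tfrac{\pi^2}{6i}$ and $\tfrac1{\theta_m((i-(n_0+1))\vee1)}$ terms, and the LLN error is again absorbed into the $|S_j/j-\E{\F}|$ sum. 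The same bounds hold for $\wt c^k_n(m)$ with $(\ell-1)$ replaced by $0$.

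\textbf{Main obstacle.} The real difficulty is twofold. On the probabilistic side it is the almost sure convergence of $\sum_j(S_j-j\E{\F})/j^2$ under only a $(1+\eps)$-th moment: this is where the hypothesis is genuinely used, and where the ordinary SLLN is insufficient, forcing the three-series/Kronecker argument. On the computational side, the two-sided estimates \eqref{eq:c1nupperbound} are routine in spirit but require disciplined bookkeeping — every error term must be steered in the right direction, the shift of the summation index by $n_0$ and the $\vee1$ safeguard at $i=n_0+1$ must be tracked, and the explicit constants ($\pi^2/6$, $\tfrac{m_0+\E{\F}n_0+(m-1)}{\theta_m^2}$, $\tfrac{mk}{\E{\F}}$) only emerge after combining several deliberately crude but mutually consistent inequalities, so obtaining exactly these constants rather than larger ones is the fiddliest part.
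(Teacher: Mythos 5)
Your proposal follows essentially the same route as the paper: expand $\log c^k_n(m)$, extract the main term $\tfrac{k}{\theta_m}\log n$, reduce the error to sums of the type $\sum_j |S_j-j\E{\F}|/j^2$ controlled through the $(1+\eps)$-moment assumption, treat $\alpha\in(1,2)$ via the growth of the denominators forced by the maximum fitness, and derive \eqref{eq:c1nupperbound} from the elementary inequalities $\log(1+x)\le x$, $\log(1+x)\ge x-x^2/2$, $\e^{-x}\ge 1-x$, with the same bookkeeping and constants.

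Two steps need tightening. First, for \eqref{eq:cknconv} the factor multiplying $S_j-j\E{\F}$ in the error is a $j$-dependent random quantity (in the paper's expansion the relevant sum has denominator $(m_0-n_0)/j+1+S_j/j$), so the conditional convergence of $\sum_j (S_j-j\E{\F})/j^2$ delivered by the three-series theorem is not by itself sufficient — you cannot multiply a conditionally convergent series termwise by a merely convergent random sequence. What is needed is absolute summability of $|S_j-j\E{\F}|/j^2$; your parenthetical Marcinkiewicz--Zygmund SLLN route does give this ($|S_j-j\E{\F}|=o(j^{1/(1+\eps)})$ a.s.), and the paper obtains it instead by bounding $\E{\sum_j |S_j-j\E{\F}|/j^2}$ via the Marcinkiewicz--Zygmund moment inequality. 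Second, in the $\alpha\in(1,2)$ case the deduction ``$\sum_j 1/S_j<\infty$ a.s.\ by Tonelli from $\E{1/(m_0+M_j)}\le 2j^{-1/(\alpha-1)+\eps}$'' is invalid as written: $1/S_j$ is not bounded by $1/(m_0+M_j)$, and $\E{1/S_j}$ (and $\E{1/M_j}$, with $M_j=\max_{k\le j}\F_k$) can be infinite, since Assumption \ref{Ass:powerlaw} puts no restriction on the lower tail of $\F$. The repair is immediate: the summands actually occurring in $-\log c^k_n$ are at most $k/(m_0+m(j-n_0)+S_j)\le k/(m_0+M_j)$, to which your Tonelli argument applies directly; the paper instead argues pathwise, using Borel--Cantelli with \eqref{eq:Miprobbound} to get $M_j\ge j^{1/(\alpha-1)-\eps}$ for all large $j$ almost surely and summing deterministically from that random index on.
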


	\begin{proof}
	We only prove the results for $c^k_n(1)$, as the proofs for $m>1$ and $\wt c^k_n(m)$ follow similarly. For ease of writing, let $\theta:=\theta_1$. We start by proving \eqref{eq:cknconv}. We can write
	\be\ba\label{eq:c1nbalance}
	c^k_n n^{k/\theta}=\exp\bigg\{&-\sum_{j=n_0}^{n-1}\log\bigg(1+\frac{k}{m_0+j-n_0+S_j}\bigg)+\frac{k}{\theta}\log n\bigg\}\\
	=\exp\bigg\{&-\sum_{j=n_0}^{n_0+\lceil 2|k|\rceil }\log\bigg(1+\frac{k}{m_0+j-n_0+S_j}\bigg) -\sum_{j=n_0+\lceil 2|k|\rceil+1}^{n-1}\frac{k}{j\theta}\\
	&-\sum_{j=n_0+\lceil 2|k|\rceil+1}^{n-1}\frac{k}{j\theta}\frac{(\E{\F}-S_j/j)-(m_0-n_0)/j}{(m_0-n_0)/j+1+S_j/j}\\
	&+\sum_{j=n_0+\lceil 2|k|\rceil+1}^{n-1}\sum_{\ell=2}^\infty (-1)^\ell\frac{1}{\ell}\Big(\frac{k}{m_0+j-n_0+S_j}\Big)^\ell+\frac{k}{\theta}\log n\bigg\},
	\ea\ee 
	where we apply a Taylor expansion on the logarithmic terms in the sum for $j\geq n_0+\lceil 2|k|\rceil+1$. The second sum and the last term balance, their sum converges to some finite value depending on $k$ and $\gamma$, where $\gamma$ is the Euler-Mascheroni constant. We now show the almost sure absolute convergence of the third sum in the second line of \eqref{eq:c1nbalance}. This is implied by the almost sure convergence of 
	\be
	\sum_{j=1}^n \frac{1}{j^2}|S_j-j\E{\F}|.
	\ee 
	We prove this by showing that the mean of this sum converges. Let $\eps>0$ such that the $(1+\eps)^{\mathrm{th}}$ moment of the $\F_i$ exists. Using H\"older's inequality, we obtain
	\be
	\sum_{j=1}^n\E{|S_j-j\E{\F}|/j^2}\leq \sum_{j=1}^n\frac{1}{j^2}\E{|S_j-j\E{\F}|^{1+\eps}}^{1/(1+\eps)}.
	\ee 
	Now, we use a specific case of the Marcinkiewicz-Zygmund inequality \cite[Proposition 3.8.2]{Gut13}, which states that for $q\in[1,2]$ and i.i.d.\  $X_i$ with $\E{X_1}=0,\E{|X_1|^q}<\infty$, there exists a constant $c_q$ such that
	\be\label{eq:marcin}
	\mathbb{E}\bigg[\Big|\sum_{i=1}^j X_i\Big|^q\bigg]\leq c_q j \E{|X_1|^q}.
	\ee 
	Thus, if we set $X_i:=\F_{i}-\E{\F}$, it follows that
	\be
	\sum_{j=1}^n \frac{1}{j^2}\E{|S_j-j\E{\F}|^{1+\eps}}^{1/(1+\eps)}\leq c_{1+\eps}\E{|\F-\E{\F}|^{1+\eps}}^{1/(1+\eps)}\sum_{j=1}^n j^{-(2-1/(1+\eps))},
	\ee
	which converges, as $\eps>0$. Finally, taking the absolute value of the double sum in \eqref{eq:c1nbalance} yields the upper bound
	\be 
	\sum_{j=n_0+\lceil 2|k|\rceil+1}^{n-1}\sum_{\ell=2}^\infty\frac{1}{\ell}\Big(\frac{|k|}{m_0+j-n_0+S_j}\Big)^\ell\leq \sum_{\ell=2}^\infty \sum_{j=\lceil 2|k|\rceil+1}^\infty \frac{|k|^\ell}{j^\ell} \leq |k|\sum_{\ell=2}^\infty \sum_{i=2}^\infty i^{-\ell}=|k|\sum_{\ell=2}^\infty(\zeta(\ell)-1).
	\ee 
	In the first step, we first bound $m_0+j-n_0+S_j$ from below by $j-n_0$ and then take all terms where $ik< j\leq(i+1)k$, $i\geq 2$, and bound them from below by $ik$, which yields the same upper bound $k$ times in the third step. The right-hand-side equals $|k|$ and thus proves the almost sure convergence of the double sum. This proves \eqref{eq:cknconv}. For proving \eqref{eq:infmeancknconv} we use a different approach. Namely, we prove that $-\log c^k_n$ converges almost surely, which yields the desired result as well. To that end, let $M_j:=\max_{i\leq j}\F_i$. Then, we write
	\be \label{eq:logckn}
	-\log c^k_n = \sum_{j=n_0}^{n-1} \log\Big(1+\frac{k}{m_0+j-n_0+S_j}\Big)\leq \sum_{j=1}^{n-1}\frac{k}{S_j}\leq \sum_{j=1}^J\frac{k}{M_j}+k\sum_{j=J+1}^n j^{-1/(\alpha-1)+\eps},
	\ee  
	where we use \eqref{eq:Miprobbound} in the last step to conclude that, by the Borel-Cantelli lemma, there exists an almost surely finite random index $J$ such that for all $j\geq J$, $M_j\geq j^{1/(\alpha-1)-\eps}$, for some small $\eps\in(0,(2-\alpha)/(\alpha-1))$. It therefore follows that the upper bound on the right-hand-side of \eqref{eq:logckn} converges as $n$ tends to infinity almost surely, and therefore so does $c^k_n$, since $-\log c^k_n$ is non-negative and increasing. We now turn to the bounds in \eqref{eq:c1nupperbound}. Rather than using a Taylor expansion as in \eqref{eq:c1nbalance}, we simply use that $\log(1+x)\leq x$, to obtain
	\be \label{eq:cknfirstbound}
	\frac{c^k_i}{c^k_n}\Big(\frac{i}{n}\Big)^{k/\theta}\leq \exp\bigg\{k(E(n)-E(i))+k\sum_{j=i}^{n-1}\frac{S_j-j\E{\F}}{(m_0+j\theta-n_0)(m_0+j-n_0+S_j)}\bigg\},
	\ee 
	where 
	\be
	E(n):=\sum_{j=n_0}^{n-1}\frac{1}{m_0+j\theta-n_0 }-\frac{1}{\theta}\log n.
	\ee 
	We rewrite $E(n)$ to find
	\be\ba\label{eq:enexpand}
	E(n)={}&\bigg(\sum_{j=0}^{n-(n_0+1)}\frac{1}{m_0+\E{\F}n_0+j\theta}-\sum_{j=1}^{n-(n_0+1)}\frac{1}{j\theta}\bigg)\\
	&+\bigg(\sum_{j=1}^{n-(n_0+1)}\frac{1}{j\theta}-\frac{1}{\theta}\log(n-(n_0+1))\bigg)+\frac{1}{\theta}\log(1-(n_0+1)/n),
	\ea\ee 
	where we note that the first and second term are decreasing and the final term is increasing in $n$. Hence, we obtain the upper bound for all $n_0+1\leq i \leq n $,
	\be
	E(n)-E(i)\leq \frac{1}{\theta}\log\Big(\frac{i}{n}\frac{n-(n_0+1)}{(i-(n_0+1))\vee 1}\Big).
	\ee 
	Using this inequality and taking the absolute value of the terms in the sum in \eqref{eq:cknfirstbound}, yields the upper bound
	\be 
	\exp\bigg\{\frac{k}{\theta}\log\Big(\frac{i}{n}\frac{n-(n_0+1)}{(i-(n_0+1))\vee 1}\Big)+\frac{k}{\E{\F}}\sum_{j=i}^\infty \frac{|S_j/j-\E{\F}|}{m_0+j-n_0+S_j}\bigg\},
	\ee 
	as required. Similarly, we find a lower bound of the same form. As $\log(1+x)\geq x-x^2/2$ for $x\geq 0$, $\exp\{-x\}\geq 1-x$ for $x\in\R$, we find
	\be \ba \label{eq:c1nlower}
	\frac{c^k_i}{c^k_n}\Big(\frac{i}{n}\Big)^{k/\theta}&\geq \exp\bigg \{&&\hspace{-13pt}- k\sum_{j=i}^{n-1} \frac{|S_j/j-\E{\F}|}{\E{\F}(m_0+j-n_0+S_j)}-\frac{1}{2}\sum_{j=i}^{n-1}\Big(\frac{k}{m_0+j-n_0+S_j}\Big)^2\\
	& &&\hspace{-13pt}+k(E(n)-E(i))\bigg\}\\
	&\geq 1-k&&\hspace{-12pt}\sum_{j=i}^{n-1} \frac{|S_j/j-\E{\F}|}{\E{\F}(m_0+j-n_0+S_j)}-\frac{1}{2}\sum_{j=i}^{n-1}\Big(\frac{k}{m_0+j-n_0+S_j}\Big)^2\\
	&\hspace{10pt} +k(E&&\hspace{-10pt}(n)-E(i)).
	\ea \ee 
	Using \eqref{eq:enexpand} and the fact that $\sum_{j=1}^{n-1} \frac{1}{j} -\log n$ is non-decreasing, we obtain the lower bound
	\be\ba
	1&-k\sum_{j=i}^{n-1} \frac{|S_j/j-\E{\F}|}{\E{\F}(m_0+j-n_0+S_j)}-\frac{1}{2}\sum_{j=i}^{n-1}\Big(\frac{k}{S_j}\Big)^2-\frac{m_0+\E{\F}n_0}{\theta^2}\sum_{j=i-n_0}^{n-(n_0+1)} \frac{1}{j^2} \\
	&+\frac{1}{\theta(n-(n_0+1))}-\frac{1}{\theta((i-(n_0+1))\vee 1)}\\
	\geq1&-k\sum_{j=i}^{n-1} \frac{|S_j/j-\E{\F}|}{\E{\F}(m_0+j-n_0+S_j)}-\frac{1}{2}\sum_{j=i}^{n-1}\Big(\frac{k}{S_j}\Big)^2-\frac{m_0+\E{\F}n_0}{\theta^2}\frac{\pi^2}{6i}\\
	&-\frac{1}{\theta((i-(n_0+1))\vee 1)},
	\ea \ee 
	which finishes the proof.
	\end{proof}

    We now show that the almost sure limits of certain (super)martingales in Lemma \ref{lemma:martingale} do not have an atom at zero:
    
    \begin{lemma}\label{lemma:noatom}
    	For $k\geq 1$, consider the martingales $M^k_n(i)$ for the PAFRO and PAFUD models and $\wt M^k_n(i)$ for the PAFFD model as in Lemma \ref{lemma:martingale} and their almost sure limits $\xi^k_i,\wt\xi^k_i$, respectively. Then, the $\xi^k_i,\wt \xi^k_i$ do not have an atom at zero. 
    \end{lemma}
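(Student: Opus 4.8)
The plan is to reduce the claim to showing that the limit $\xi^1_i$ of the \emph{genuine} martingale $M^1_n(i)$ — which is a $\Pf{\cdot}$-martingale in all three models by Lemma~\ref{lemma:martingale} — has no atom at zero, and then to prove that by combining a conditional Paley--Zygmund inequality with Lévy's upward convergence theorem. \emph{Step 1 (reduction to $k=1$).} I first dispose of the regime where $\F$ satisfies Assumption~\ref{Ass:powerlaw} with $\alpha\in(1,2)$: here Lemma~\ref{lemma:ckn} gives $c^k_n(m)\toas\underline c_k(m)$ and $\wt c^k_n(m)\toas\underline{\wt c}_k(m)$, and since $c^k_n(m),\wt c^k_n(m)$ are non-increasing in $n$ for $k\ge1$ with a.s.\ strictly positive limits (visible from the proof of Lemma~\ref{lemma:ckn}), they are bounded below by those limits; as $x\mapsto\binom{x+k-1}{k}$ is increasing on $(0,\infty)$ and $\zni+\F_i\ge\F_i>0$, this forces $M^k_n(i)\ge\underline c_k(m)\binom{\F_i+k-1}{k}>0$ (and the tilde analogue), so $\xi^k_i,\wt\xi^k_i>0$ a.s.\ and there is nothing more to do. In the remaining regime $\E{\F^{1+\eps}}<\infty$, Lemma~\ref{lemma:ckn} gives $c^k_n(m)n^{k/\theta_m}\toas c_k(m)$ and $\wt c^k_n(m)n^{k/\theta_m}\toas\wt c_k(m)$ with a.s.\ positive limits, so in particular $c^1_n(m)\toas0$; substituting $\zni+\F_i=M^1_n(i)/c^1_n(m)$ into $\binom{x+k-1}{k}=\frac1{k!}\prod_{\ell=0}^{k-1}(x+\ell)$ gives
\[ M^k_n(i)=\frac{c^k_n(m)}{k!\,(c^1_n(m))^k}\prod_{\ell=0}^{k-1}\bigl(M^1_n(i)+\ell\,c^1_n(m)\bigr)\ \toas\ \frac{c_k(m)}{k!\,c_1(m)^k}\,(\xi^1_i)^k, \]
and likewise $\wt M^k_n(i)\toas\frac{\wt c_k(m)}{k!\,c_1(m)^k}(\xi^1_i)^k$; as the prefactors are a.s.\ strictly positive, $\{\xi^k_i=0\}=\{\wt\xi^k_i=0\}=\{\xi^1_i=0\}$ up to null sets, so it suffices to prove $\P{\xi^1_i=0}=0$.

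\emph{Step 2 (second moments and Paley--Zygmund).} I would show $M^1_n(i)$ is $L^2$-bounded under $\Pf{\cdot}$, hence uniformly integrable, so $\Ef{}{\xi^1_i\,|\,\G_n}=M^1_n(i)$. For PAFRO and PAFUD this uses the elementary bounds $x^2\le2\binom{x+1}{2}$ and $\binom{x+1}{2}^2\le6\binom{x+3}{4}$ valid for $x\ge0$: since $M^2_n(i)$ and $M^4_n(i)$ are $\Pf{\cdot}$-martingales (Lemma~\ref{lemma:martingale}) and the ratios $(c^1_n)^2/c^2_n$, $(c^2_n)^2/c^4_n$ converge by Lemma~\ref{lemma:ckn}, both $\Ef{}{(M^1_n(i))^2}$ and $\Ef{}{(M^2_n(i))^2}$ stay bounded in $n$. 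The identity $x^2=2\binom{x+1}{2}-x$ gives $(M^1_n(i))^2=\frac{2(c^1_n)^2}{c^2_n}M^2_n(i)-c^1_nM^1_n(i)$, and letting $n\to\infty$ (using $c^1_n\to0$) yields $(\xi^1_i)^2=2\rho_*\,\xi^2_i$ with $\rho_*:=c_1(m)^2/c_2(m)>0$, so $\Ef{}{(\xi^1_i)^2\,|\,\G_n}=2\rho_*\,M^2_n(i)$ by uniform integrability of $M^2_n(i)$. Applying the conditional Paley--Zygmund inequality at level $\tfrac12\Ef{}{\xi^1_i\,|\,\G_n}$ now yields, for all $n\ge n_0\vee i$,
\[ \Pf{\xi^1_i>0\,|\,\G_n}\ \ge\ \frac14\cdot\frac{(M^1_n(i))^2}{2\rho_*\,M^2_n(i)}\ =\ \frac{1}{8\rho_*}\cdot\frac{(c^1_n)^2}{c^2_n}\cdot\frac{2(\zni+\F_i)}{\zni+\F_i+1}\ \ge\ \frac{1}{8\rho_*}\Bigl(\inf_n\frac{(c^1_n)^2}{c^2_n}\Bigr)\frac{2\F_i}{\F_i+1}\ =:\ \kappa_*(\F)>0. \]
For PAFFD the identical scheme works with the non-negative supermartingale $\wt M^2_n(i)$ replacing $M^2_n(i)$ (invoking only $\Ef{}{\wt\xi^2_i\,|\,\G_n}\le\wt M^2_n(i)$, together with $M^1_n(i)$ being a genuine UI $\Pf{\cdot}$-martingale), again giving a uniform strictly positive lower bound.

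\emph{Step 3 (conclusion) and the main difficulty.} As $M^1_n(i)$ is $\G_n$-measurable, $\{\xi^1_i>0\}$ lies in $\bigvee_n\G_n$, so Lévy's upward convergence theorem for the filtration $(\sigma((\F_j)_{j\ge1})\vee\G_n)_n$ gives $\Pf{\xi^1_i>0\,|\,\G_n}\toas\1{\xi^1_i>0}$; combined with the uniform bound from Step~2 this forces $\1{\xi^1_i>0}\ge\kappa_*(\F)>0$ a.s., i.e.\ $\Pf{\xi^1_i=0}=0$ a.s., and integrating over the fitness values gives $\P{\xi^1_i=0}=0$, whence $\P{\xi^k_i=0}=\P{\wt\xi^k_i=0}=0$ as well. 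The delicate point — and essentially the only obstacle — is that the Paley--Zygmund ratio must stay bounded away from $0$ \emph{uniformly in $n$}, even on the event that $\zni$ remains small; this is exactly why one needs the sharp asymptotics $c^k_n n^{k/\theta_m}\to c_k>0$ of Lemma~\ref{lemma:ckn} to control $\Ef{}{(\xi^1_i)^2\,|\,\G_n}$ by a bounded multiple of $M^2_n(i)$, and why in the PAFFD case one must route the argument through $M^1_n(i)$ and the supermartingale $\wt M^2_n(i)$ rather than an ill-defined martingale $M^2_n(i)$.
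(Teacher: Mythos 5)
Your proposal is correct in the regimes it treats, but it proves the lemma by a genuinely different route than the paper. The paper neither reduces to $k=1$ nor uses a second--moment method: it bounds $\Pf{\xi^k_i<\eps}$ directly by applying Markov's inequality to the \emph{negative-power} processes $M^{kp}_n(i)$ with $p\in(-\min(\F_i,1)/k,0)$, which are $\Pf{\cdot}$-martingales for PAFRO/PAFUD, yielding $\Pf{\xi^k_i<\eps}\le C\,\eps^{|p|}$ with no assumptions on the fitness law; for PAFFD, where the negative-power $\wt M^k_n(i)$ is a submartingale (the wrong direction), the paper builds a stopped supermartingale $\hat M^k_{T_N\wedge n}(i)$ with a modified normalisation $\hat c^k_n$ and a stopping time controlling $\zni\le Cn^{1-\eta}$. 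You instead write $\xi^k_i$ as an explicit a.s.\ positive multiple of $(\xi^1_i)^k$ (via $c^k_n/(c^1_n)^k\to c_k/c_1^k$ from Lemma~\ref{lemma:ckn}), establish conditional $L^2$-boundedness of $M^1_n(i)$ and $M^2_n(i)$ through $M^2_n,M^4_n$, and then combine a conditional Paley--Zygmund bound with L\'evy's upward theorem; the key ratio $(M^1_n(i))^2/M^2_n(i)=\bigl((c^1_n)^2/c^2_n\bigr)\cdot 2(\zni+\F_i)/(\zni+\F_i+1)$ is indeed bounded below uniformly in $n$ by a strictly positive $\sigma((\F_j)_j)$-measurable quantity, and for PAFFD the supermartingale property of $\wt M^2_n(i)$ points exactly the way you need ($\Ef{}{\wt\xi^2_i\mid\G_n}\le \wt M^2_n(i)$ by conditional Fatou), so the paper's stopping-time construction is avoided altogether. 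The trade-offs: your argument is arguably cleaner for PAFFD and all the individual steps check out (the algebraic identities, the UI claims, and the L\'evy step are sound), whereas the paper's argument for PAFRO/PAFUD needs no moment hypotheses at all and gives a polynomial rate for $\Pf{\xi^k_i<\eps}$ near zero. One caveat: the lemma is stated without hypotheses on $\mu$, and your dichotomy ($\E{\F^{1+\eps}}<\infty$ versus a power law with $\alpha\in(1,2)$) is not exhaustive; it does cover exactly the setting in which the lemma is invoked in the proof of Theorem~\ref{Thrm:maxdegree}, and the gap is easily closed by noting that $c^k_n/(c^1_n)^k$ always converges to a positive finite limit (its logarithm is controlled by $\sum_j (m_0+m(j-n_0))^{-2}<\infty$, deterministically), while if $c^1_n$ does not tend to $0$ then $c^k_n\ge (c^1_n)^k$ stays bounded away from zero and the conclusion is immediate, as in your $\alpha\in(1,2)$ case.
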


	\begin{proof}
		We first focus on the martingales $M^k_n$ for the PAFRO and PAFUD models. Let $\eps>0$. We can write,
		\be \ba\label{eq:noatom}
		\Pf{\xi_i^k<\eps}&=\lim_{n\to \infty}\Pf{c^k_n{\zni+\F_i+(k-1)\choose k}<\eps}\\
		&\leq \lim_{n\to\infty}\Pf{c^k_n(\zni+\F_i)^k< \eps\Gamma(k+1)},
		\ea \ee 
		since $x^k\leq \Gamma(x+k)/\Gamma(x)$ for $k\geq 1, x>0$, by \cite[Theorem 1]{Jam13}. Now, take  $p\in (-\min(\F_i,1)/k,0)$. The goal is to raise both sides to the power $p$ and use a Markov bound. We first, however, need some other inequalities to obtain useful expressions. Using the concavity of $\log x$ and noting that $x+pk$ is a weighted average of $x$ and $x+k$ when $p\in(0,1)$ and $x+k$ is a weighted average of $x$ and $x+pk$ when $p\geq 1$, we obtain, for all $x,k\geq 0$,
		\be\label{eq:cnkineq}
		\Big(1-\frac{k}{x+k}\Big)^p \geq 1-\frac{pk}{x+pk}\text{ when }p\in(0,1),\text{ and } \Big(1-\frac{k}{x+k}\Big)^p \leq 1-\frac{pk}{x+pk}\text{ when }p\geq 1.
		\ee  
		From the first inequality, we also immediately obtain, for $p\in(-1,0),k\geq0,x\geq k|p|$, 
		\be \label{eq:c1nnegk}
		\Big(1-\frac{k}{x+k}\Big)^p \leq 1-\frac{pk}{x+pk}.
		\ee 
		It thus follows that, when $p\in(-\min(\F_i,1)/k,0)$, $(c^k_n)^p\leq c^{kp}_n$, as $\F_i>k|p|$. Also, from \cite{Wen48} it follows that for all $x\geq 0,s\in(0,1)$, 
		\be 
		x^s \geq \frac{\Gamma(x+s)}{\Gamma(x)}.
		\ee 
		Hence, since $\Gamma(x)/\Gamma(x+s)$ is decreasing in $x$ for $s\geq 0$, when $p\in(-1,0),x\geq|p|$,
		\be\label{eq:gammanegk}
		x^p\leq \frac{\Gamma(x+p)}{\Gamma(x)},
		\ee
		so that, combining both \eqref{eq:c1nnegk} and \eqref{eq:gammanegk} in \eqref{eq:noatom} with $p\in(-\min(\F_i/k,1/k),0)$, yields
		\be \ba\label{eq:noatombound}
		\lim_{n\to\infty}\Pf{c^k_n(\zni+\F_i)^k< \eps\Gamma(k+1)}&\leq \lim_{n\to\infty}\Pf{M^{kp}_n(i)\geq \eps^p\Gamma(k+1)^p/\Gamma(kp+1)}\\
		&\leq \lim_{n\to\infty}\Ef{}{M^{kp}_n(i)}\eps^{|p|}\Gamma(k+1)^{|p|}\Gamma(kp+1)\\
		&=M^{kp}_{i\vee n_0}(i)\eps^{|p|}\Gamma(k+1)^{|p|}\Gamma(pk+1),
		\ea\ee 
		which is finite almost surely and tends to zero with $\eps$ a.s. Hence, almost surely,
		\be 
		\Pf{\xi^k_i=0}=\lim_{\eps\downarrow 0}\Pf{\xi^k_i<\eps}=0,
		\ee 
		and thus $\mathbb{P}(\xi^1_i=0)=0$, by the dominated convergence theorem.	For the PAFFD model, an altered argument is required, since $\wt M^k_n(i)$ is a submartingale for negative $k$, as follows from Lemma \ref{lemma:martingale} so that  the final steps in \eqref{eq:noatombound} no longer work. Rather, we only follow the same steps for $\wt \xi^k_i$ in \eqref{eq:noatom}. Then, let us define, for a large constant $C>0$, $\eta\in(0,\E{\F}/(\E{\F}+m))$ and a large integer $N\geq i\vee n_0$, the stopping time $T_N:=\inf\{n\geq N: \zni\geq Cn^{1-\eta}\}$. We aim to show that we can construct a sequence $\hat c^k_n$, to be defined later, such that 
		\be 
		\hat M^k_{T_N\wedge n}(i):= \hat c^k_{T_N\wedge n}{\Zm_{T_N\wedge n}(i)+\F_i+(k-1)\choose k}
		\ee 
		is a supermartingale for $k\in(-\min(\F_i,1),0)$ for the PAFFD model. First, recall the computations in \eqref{eq:paffdmart}. We notice that the product in the second line contains terms which are positive but less than $1$ when $k\in(-\min(\F_i,1),0)$. Therefore, the product decreases as the number of terms increases, so that we can bound the expected value from above by $1+k\P{\Delta \zni\geq 1\,|\,\G_n}/(\zni+\F_i)$. If we define
		\be 
		\hat c^k_n:=\prod_{j=n_0}^{n-1}\Big(1-\frac{km a_j}{m_0+m(j-n_0)+S_j+kma_j}\Big),\qquad a_n:=1-\frac{m-1}{2}\frac{Cn^{-\eta}+\F_i/n}{(m_0+m(n-n_0)+S_n)/n},
		\ee 
		we obtain
		\be \ba 
		\mathbb{E}_\F&\big[\hat M^k_{T_N\wedge (n+1)}(i)\ind_{\{T_N\geq n+1\}}\,\big|\, \G_n\big ]\\
		&\leq \hat M^k_n (i)\Big(1-\frac{km a_n}{m_0+m(n-n_0)+S_n+kma_n}\Big)\Big(1+k\frac{\P{\Delta\zni\geq 1\,|\,\G_n}}{X_n+\F_i}\Big)\ind_{\{T_N\geq n+1\}}.
		\ea \ee 
		We now bound $\P{\Delta\zni\geq 1\,|\,\G_n}$ from below, using that $1-(1-x)^m\geq mx-m(m-1)x^2/2$ for all $x\in(0,1),m\in\N$. Then, on $\{T_N\geq n+1\}$, we can bound $\zni$ from above by $Cn^{1-\eta}$, which yields the upper bound
		\be\ba
		\hat M^k_n(i) &\Big(1-\frac{km a_n}{m_0+m(n-n_0)+S_n+kma_n}\Big)\Big(1+\frac{km a_n}{m_0+m(n-n_0)+S_n}\Big)\ind_{\{T_N\geq n+1\}}\\
		&=\hat M^k_n(i) \ind_{\{T_N\geq n+1\}}=\hat M^k_{T\wedge n}(i)\ind_{\{T_N\geq n+1\}}.
		\ea\ee 
		Finally, as the event $\{T_N\leq n\}$ is $\G_n$ measurable,
		\be 
		\mathbb{E}_\F\big[M^k_{T_N\wedge (n+1)}(i)\ind_{\{T_N\leq n\}}\,\big|\, \G_n\big ]=\hat M^k_{T_N}(i)\ind_{\{T_N\leq n\}}=\hat M^k_{T_N\wedge n}(i)\ind_{\{T_N\leq n\}}.
		\ee 
		Together with the computations above, this yields
		\be 
		\Ef{\big}{\hat M^k_{T_N\wedge (n+1)}(i)\,\big|\,\G_n}\leq\hat M^k_{T_N\wedge n}(i),
		\ee 
		which shows  indeed that $\hat M^k_{T_N\wedge n}(i)$ is a supermartingale for $k\in(-\min(\F_i,1),0)$. It also follows relatively easily, following similar steps as in the proof of Lemma \ref{lemma:ckn}, that $\hat c^k_n n^{-k/\theta_m}\toas \hat c_k$ for some random variable $\hat c_k$ as $n$ tends to infinity. So, we can then write, for $k\geq 1,p\in(-\min(\F_i/k,1/k),0)$, continuing the steps in \eqref{eq:noatom} and using \eqref{eq:c1nnegk} and \eqref{eq:gammanegk} as in \eqref{eq:noatombound},
		\be
		\mathbb{P}_\F(\wt \xi^k_i<\eps)\leq \lim_{n\to\infty}\mathbb{P}_\F((c^{kp}_n/\hat c^{kp}_n)\hat M^{kp}_n(i)\geq \eps^p\Gamma(k+1)^p/\Gamma(kp+1)).
		\ee
		We now intersect with the event $\{T_N\geq n+1\}$ and its complement to obtain the upper bound
		\be \ba
		\lim_{n\to\infty}{}&\Pf{\{(c^{kp}_n/\hat c^{kp}_n)\hat M^{kp}_n(i)>\eps^p\Gamma(k+1)^p/\Gamma(kp+1)\}\cap \{T_N\geq n+1\}}+\Pf{T_N\leq n}\\
		&\leq \lim_{n\to\infty}\Pf{(c^{kp}_n/\hat c^{kp}_n)\hat M^{kp}_{T_N\wedge n}(i)>\eps^p\Gamma(k+1)^p/\Gamma(kp+1)}+\Pf{T_N\leq n}.
		\ea\ee
		Using the Markov inequality for the first probability and because $\hat M^{kp}_{T_N\wedge n}(i)$ is a supermartingale since $kp\in(-\min(\F_i,1),0)$, we find the upper bound 
		\be\ba\label{eq:epsbound}
		\lim_{n\to\infty}&(c^{kp}_n/\hat c^{kp}_n)\eps^{|p|}\Ef{}{\hat M^{kp}_{T_N\wedge n}(i)}\Gamma(k+1)^{|p|}\Gamma(kp+1)+\Pf{T_N\leq n}\\
		&\leq (c_{kp}/\hat c_{kp})\eps^{|p|}\Ef{}{\hat M^{kp}_{N}(i)}\Gamma(k+1)^{|p|}\Gamma(kp+1)+\lim_{n\to\infty}\Pf{T_N\leq n}.
		\ea \ee 
		We note that the first term tends to zero with $\eps$. For the second probability we write, for some $s^{\text{th}}$ moment bound, with $s>(\E{\F}/(\E{\F}+m)-\eta)^{-1}$,
		\be \ba 
		\Pf{T_N\leq n}&\leq  \sum_{j=N}^n \mathbb{P}_\F\big((\Zm_j(i)+\F_i)^s\geq C^sj^{s(1-\eta)}\big)\leq  \frac{\Gamma(k+1)}{C^s} \sum_{j=N}^n (\wt c^s_j)^{-1} j^{-s(1-\eta)} \Ef{}{\wt M^s_j(i)}.
		\ea\ee 
		Using the upper bound for $c^s_{n_0}/c^s_n=1/c^s_n$ in \eqref{eq:c1nupperbound}, we find the upper bound
		\be \ba
		C_{k,s}A\wt M^s_{i\vee n_0}(i)\sum_{j=N}^n j^{s(1/\theta_m-(1-\eps))}\leq \wt C_{k,s}A \wt M^s_{i\vee n_0}(i) N^{1-s(\E{\F}/(\E{\F}+m)-\eps)},
		\ea \ee 
		where $A$ equals the upper bound in \eqref{eq:c1nupperbound} with $i=n_0$. This upper bound is independent of $n$, so we find, combining this with \eqref{eq:epsbound},
		\be
		\lim_{\eps\downarrow 0}\mathbb{P}_\F(\wt\xi^k_i<\eps)\leq \wt C_{k,s}A \wt M^s_{i\vee n_0}(i) N^{1-s(\E{\F}/(\E{\F}+m)-\eta)},
		\ee
		where the right-hand-side tends to zero almost surely as $N$ tends to infinity, by the choice of $s$. Thus, it follows that $\lim_{\eps \downarrow 0}\mathbb{P}_\F(\wt \xi^k_i<\eps)=0$ for all $k\geq 1$. Again, using the dominated convergence theorem finally yields the required result.
	\end{proof}
    In order to show that the maximum degree converges almost surely when $\alpha>1+\theta_m$, we require a little bit more control over the (super)martingales $M^k_n(i),\wt M^k_n(i)$ than just their convergence, as we need to be able to bound their suprema, for which we introduce the following lemma:
    
    \begin{lemma}\label{lemma:supmkn}
    	Consider the martingales (resp.\ supermartingales) $M^k_n(i)$ (resp.\ $\wt M^k_n(i)$) as in Lemma \ref{lemma:martingale}. Let $M:=\sup\{s\geq 1:\E{\F^s}<\infty\}$. Then, for all $m\in\N,k\in(\theta_m,M)$, almost surely
    	\be\label{eq:supmkn0}
    	\lim_{i\to\infty}\sup_{n\geq n_0\vee i}M^k_n(i)=0,\qquad \lim_{i\to\infty}\sup_{n\geq n_0\vee i}\wt M^k_n(i)=0.
    	\ee 
    \end{lemma}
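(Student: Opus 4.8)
The plan is to estimate $\sup_{n\ge i}M^k_n(i)$ by applying Doob's maximal inequality conditionally on the fitness sequence, combined with the precise decay rate of $c^k_n(m)$ and $\wt c^k_n(m)$ from Lemma~\ref{lemma:ckn}. First observe that $k\in(\theta_m,M)$ forces both $\E{\F^k}<\infty$ and $\E{\F^{1+\eps}}<\infty$ for some $\eps>0$ (since $k>\theta_m>1$), so Lemma~\ref{lemma:ckn} gives $c^k_n(m)n^{k/\theta_m}\toas c_k(m)$ and $\wt c^k_n(m)n^{k/\theta_m}\toas\wt c_k(m)$ for almost surely finite limits. A convergent sequence of positive reals is bounded, so $\Lambda_k:=\sup_{n\ge n_0}c^k_n(m)n^{k/\theta_m}$ and $\wt\Lambda_k:=\sup_{n\ge n_0}\wt c^k_n(m)n^{k/\theta_m}$ are almost surely finite; moreover they are measurable with respect to $(\F_i)_{i\ge1}$ (each $c^k_n(m)$ is a deterministic function of $S_{n_0},\dots,S_{n-1}$), and almost surely $c^k_n(m)\le\Lambda_k n^{-k/\theta_m}$ and $\wt c^k_n(m)\le\wt\Lambda_k n^{-k/\theta_m}$ for all $n\ge n_0$.

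Fix $\eps>0$. For $i>n_0$ one has $\Zm_i(i)=0$, so $M^k_i(i)=c^k_i(m)\binom{\F_i+k-1}{k}$ and $\wt M^k_i(i)=\wt c^k_i(m)\binom{\F_i+k-1}{k}$ are $(\F_j)_{j\le i}$-measurable. By Lemma~\ref{lemma:martingale}, $(M^k_n(i))_{n\ge i}$ is a nonnegative martingale under $\mathbb{P}_\F$ for the PAFRO and PAFUD models, while $(\wt M^k_n(i))_{n\ge i}$ is a nonnegative supermartingale under $\mathbb{P}_\F$ for the PAFFD model (using $k>0$). In either case Doob's maximal inequality under $\mathbb{P}_\F$ gives
\[
\Pf{\sup_{n\ge i}M^k_n(i)>\eps}\le\frac{M^k_i(i)}{\eps}\le\frac{\Lambda_k}{\eps}\,i^{-k/\theta_m}\binom{\F_i+k-1}{k},
\]
and the analogous bound holds with $\wt M^k_n(i),\wt\Lambda_k$. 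Standard Gamma-function estimates (e.g.\ $\Gamma(x+k)/\Gamma(x)\le(x+k)^k$ for $k\ge1$) give $\binom{\F_i+k-1}{k}\le C_k(1+\F_i^k)$ for a constant $C_k$.

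It remains to sum over $i$. The summands $i^{-k/\theta_m}\binom{\F_i+k-1}{k}$ are functions of $i$ and of the i.i.d.\ variables $\F_i$, and since $k/\theta_m>1$ and $\E{\F^k}<\infty$,
\[
\E{\sum_{i>n_0}i^{-k/\theta_m}\binom{\F_i+k-1}{k}}=\E{\binom{\F+k-1}{k}}\sum_{i>n_0}i^{-k/\theta_m}<\infty,
\]
so $\sum_{i>n_0}i^{-k/\theta_m}\binom{\F_i+k-1}{k}<\infty$ almost surely; as $\Lambda_k,\wt\Lambda_k<\infty$ almost surely this yields $\sum_{i>n_0}\Pf{\sup_{n\ge i}M^k_n(i)>\eps}<\infty$ for almost every fitness realization. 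The conditional Borel--Cantelli lemma then shows that, for almost every fitness realization, only finitely many of the events $\{\sup_{n\ge n_0\vee i}M^k_n(i)>\eps\}$ occur $\mathbb{P}_\F$-almost surely, hence $\limsup_{i\to\infty}\sup_{n\ge n_0\vee i}M^k_n(i)\le\eps$ almost surely; intersecting over $\eps\in\{1/\ell:\ell\in\N\}$ gives $\lim_{i\to\infty}\sup_{n\ge n_0\vee i}M^k_n(i)=0$ almost surely, and likewise for $\wt M^k_n(i)$, which is \eqref{eq:supmkn0}. The only real subtlety is that $c^k_n(m)$ is itself random, so one cannot control $\E{c^k_n(m)}$ cleanly; this is circumvented by carrying out the whole argument conditionally on $(\F_i)_{i\ge1}$ and using the almost sure pointwise bound $c^k_n(m)\le\Lambda_k n^{-k/\theta_m}$ from Lemma~\ref{lemma:ckn}. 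The two hypotheses $k>\theta_m$ and $k<M$ enter precisely through the convergence of $\sum_i i^{-k/\theta_m}$ and the finiteness of $\E{\F^k}$, respectively.
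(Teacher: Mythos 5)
Your argument is correct, and its skeleton coincides with the paper's (a Doob-type maximal bound $\Pf{\sup_{n\geq i\vee n_0}M^k_n(i)\geq\eps}\leq\eps^{-1}M^k_{i\vee n_0}(i)$, followed by summation over $i$ and a Borel--Cantelli/first-moment argument), but you handle the two delicate points differently. First, to control the random normalisation $c^k_i$ you invoke the almost sure convergence $c^k_n(m)n^{k/\theta_m}\toas c_k(m)$ from Lemma~\ref{lemma:ckn}, define the fitness-measurable, almost surely finite envelope $\Lambda_k=\sup_{n\geq n_0}c^k_n(m)n^{k/\theta_m}$, and then run the whole argument under $\mathbb{P}_\F$, concluding with a conditional Borel--Cantelli step; the paper instead never conditions in this way, but truncates on the law-of-large-numbers event $E_\ell(\delta)=\{|S_j/j-\E{\F}|\leq\delta\ \forall j\geq\ell\}$, on which the deterministic bound $c^k_i\leq i^{-k/(\theta_m(1+\delta))}$ holds, computes an unconditional expectation of the number of exceedances, and lets $\ell\to\infty$. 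Your route buys a cleaner exponent $k/\theta_m$ and no $\delta$-slack, at the price of importing the full strength of Lemma~\ref{lemma:ckn} (whose proof uses the Marcinkiewicz--Zygmund estimate), whereas the paper's version only needs the strong law through $E_\ell(\delta)$; both uses of Lemma~\ref{lemma:ckn} are legitimate since $k\in(\theta_m,M)$ forces $\E{\F^{1+\eps}}<\infty$ and $\E{\F^k}<\infty$. Second, for the PAFFD case you quote the maximal inequality for nonnegative supermartingales directly; the paper makes a point of the fact that the textbook Doob inequality is stated for submartingales and re-derives the same bound for $\wt M^k_n(i)$ via a stopping time and optional sampling --- your shortcut is the same inequality, so this is a presentational difference, not a gap.
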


	\begin{proof}
	We note that the first result is implied if, for any $\eps>0$,
	\be 
	\P{\sup_{n\geq i\vee n_0}M^k_n(i)\geq \eps\text{ for infinitely many }i}=0,
	\ee 
	and similarly for $\wt M^k_n(i)$. We now use the `good' event $E_\ell(\delta):=\{|S_j/j-\E{\F}|\leq \delta\ \forall j\geq \ell \}$, where we take $\delta>0$ sufficiently small such that $k\in(\theta_m(1+\delta),I)$. That is, we intersect with $E_\ell(\delta)$ and $E_\ell(\delta)^c$. By writing i.o.\ for `infinitely often', we find
	\be \ba\label{eq:supmknio}
	\P{\sup_{n\geq i\vee n_0}M^k_n(i)\geq \eps\text{ i.o.}}&\leq \P{\Big\{\sup_{n\geq i\vee n_0}M^k_n(i)\geq \eps\text{ i.o.}\Big\}\cap E_\ell(\delta)}+\P{E_\ell(\delta)^c}\\
	&=\mathbb{P}\Big(\ind_{E_\ell(\delta)}\sum_{i=1}^\infty \ind_{A_i}=\infty\Big)+\P{E_\ell(\delta)^c},
	\ea\ee 
	where $A_i:=\{\sup_{n\geq i\vee n_0}M^k_n(i)\geq \eps\}$. We now show that the first probability on the right-hand-side equals $0$ for every $\ell\in\N$, by showing the sum of indicators has a finite mean. We write
	\be\label{eq:indmean}
	\mathbb{E}\bigg[\ind_{E_\ell(\delta)}\sum_{i=1}^\infty \ind_{A_i}\bigg]=\mathbb{E}\bigg[\ind_{E_\ell(\delta)}\Ef{\Big}{\sum_{i=1}^\infty \ind_{A_i}}\bigg],
	\ee 
	and first deal with the conditional expectation. We apply Doob's martingale inequality \cite[Theorem II 1.7]{RevYor13} to the events $A_i$ to find
	\be\label{eq:doobmart}
	\Pf{A_i}=\lim_{N\to\infty}\Pf{\sup_{i\vee n_0 \leq n\leq N}M^k_n(i)\geq \eps}\leq \lim_{N\to\infty}\eps^{-1}\Ef{}{M^k_N(i)}=\eps^{-1}\Ef{}{M^k_{i\vee n_0}(i)},
	\ee 
	where the first step holds by the monotonicity of the events $\{\sup_{i\vee n_0\leq n\leq N}M^k_n(i)\geq \eps\}$.
	
	Doob's martingale inequality holds for submartingales only, though. However, we can still prove the same upper bound for $\wt M^k_n(i)$, but a different technique is required. We define the stopping time $\tau_\eps:=\inf\{ n\geq i\vee n_0\, |\, \wt M^k_n(i)\geq \eps\}$. Then, for any $N\in\N$,
	\be\ba
	\mathbb{P}_\F\bigg(\sup_{i\vee n_0\leq n \leq N}\wt M^k_n(i)\geq \eps\bigg) &=\mathbb{P}_\F(\tau_\eps\leq N)=\mathbb{P}_\F\Big(\ind_{\{\tau_\eps\leq N\}}\wt M^k_{\tau_\eps}(i)\geq \eps\Big) \leq \frac{1}{\eps}\Ef{}{\ind_{\{\tau_\eps\leq N\}}\wt M^k_{\tau_\eps}(i)}\\
	&\leq \frac{1}{\eps}\Big(\Ef{}{\ind_{\{\tau_\eps\leq N\}}\wt M^k_{\tau_\eps}(i)} +\Ef{}{\ind_{\{\tau_\eps> N\}}\wt M^k_N(i)}\Big)=\frac{1}{\eps}\Ef{}{\wt M^k_{\tau_\eps \wedge N}(i)},
	\ea\ee
	see also \cite[Exercise $1.25$, Chapter II]{RevYor13}. We now use the optional sampling theorem \cite[Theorem 10.10]{Wil91}, which yields the required upper bound. Again, by monotonicity and taking $N$ to infinity we obtain the same result. Now, using \eqref{eq:doobmart} in \eqref{eq:indmean} and using Markov's inequality, on $E_\ell(\delta)$, 
	\be \ba
	\mathbb{E}\bigg[&\ind_{E_\ell(\delta)}\sum_{i=1}^\infty \eps^{-1} c^k_{i\vee n_0} { \Zm_{i\vee n_0}(i)+\F_i+(k-1)\choose k}\bigg]\\
	&\leq C\sum_{i=\ell}^\infty\E{\ind_{E_\ell(\delta)} i^{-k/(\theta_m(1+\delta))}{\F_i+(k-1)+m_0\choose k}}+\sum_{i=1}^{\ell-1} \E{{\F_i+(k-1)+m_0\choose k}} \\
	&\leq \wt C(1+\E{\F^k})\sum_{i=\ell}^\infty i^{-k/(\theta_m(1+\delta))}+\wt C(1+ \E{\F^k})\ell,
	\ea\ee 
	which is finite by the choice of $k$ and $\delta$. For the second sum we cannot use the event $E_\ell(\delta)$ and therefore bound $c^k_{i\vee n_0}$ from above by $1$. We note that we can indeed bound the mean of ${\F+(k-1)+m_0\choose k}$ by a constant times $1$ plus the $k^{\mathrm{th}}$ moment of $\F$. Namely, using the asymptotics of the Gamma function,
	\be \ba
	\E{{\F+(k-1)+m_0\choose k}}&= \int_0^\infty {x +(k-1)+m_0\choose k}\mu(\d x)\\
	&\leq \int_0^{x^*} {x +(k-1)+m_0\choose k}\mu(\d x)+C_1\int_{x^*}^\infty  x^k \mu(\d x)\\
	&\leq C_2(1+\E{\F^k}),
	\ea \ee
	with $C_2:=\max\{C_1,\int_0^{x^*} {x +(k-1)+m_0\choose k}\mu(\d x)\}$ and $x^*$ such that for $x\geq x^*, {x+(k-1)+m_0\choose k}\leq C_1 x^k$. It follows that the mean in \eqref{eq:indmean} is finite and thus that the first probability on the right-hand-side of \eqref{eq:supmknio} equals $0$. Hence,
	\be 
	\P{\sup_{n\geq i\vee n_0}M^k_n(i)\geq \eps\text{ i.o.}}\leq \P{E_\ell(\delta)^c},
	\ee 
	which tends to $0$ as $\ell\to\infty$ by the strong law of large numbers, and so we obtain \eqref{eq:supmkn0}.
	\end{proof}

    The final result we need comes from \cite{Athr08} and provides conditions such that the maximum of a double array converges to a certain limit:

	\begin{proposition}{\cite[Proposition 3.1]{Athr08}}\label{lemma:sequences}
	Let $\{a_{n,i}:i\in[n]\}_{n\geq 1}$ be a double array of non-negative numbers such that
	\begin{enumerate}
		\item For all $i\geq 1,\lim_{n\rightarrow\infty} a_{n,i}=a_i<\infty$,
		\item $\sup_{n\geq 1}a_{n,i}\leq b_i<\infty$,
		\item $\lim_{i\rightarrow \infty} b_i=0$,
		\item For $i\neq j$, $a_i\neq a_j$.
	\end{enumerate}
	Then,
	\begin{itemize}
		\item $\max_{i\in [n]}a_{n,i}\rightarrow \max_{i\geq 1}a_i$, as $n\rightarrow \infty$.
		\item In addition, there exist $I_0$ and $N_0$ such that $\max_{i\in[n]}a_{n,i}=a_{n,I_0}$ for all $n\geq N_0$.
	\end{itemize}
	\end{proposition}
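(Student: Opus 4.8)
The plan is to derive both assertions directly from the tail control in hypotheses (2)--(3), which forces that for any positive threshold only finitely many indices ever matter, uniformly in $n$; everything else is then a finite-set argument.

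\textbf{Step 1: locating the limiting maximiser.} First I would note that since $a_1,a_2,\dots$ are pairwise distinct by (4) and non-negative, at least one is positive, so $a^\star:=\sup_{i\ge1}a_i>0$, and since $a_i\le b_i\to0$ this supremum is attained. By (4) the maximiser $I_0$ with $a_{I_0}=a^\star$ is unique. Next I would show that the runner-up value $s:=\sup_{i\ne I_0}a_i$ is strictly below $a^\star$: if $s=a^\star$ then, because $a_i\le b_i\to0$ and $a^\star>0$, this supremum would be attained at some $i\ne I_0$, contradicting (4); hence $s<a^\star$. Fix $c:=(s+a^\star)/2\in(s,a^\star)$, and note $c>0$.

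\textbf{Step 2: convergence of the maximum.} For the lower bound, $\max_{i\in[n]}a_{n,i}\ge a_{n,I_0}\to a^\star$ by (1). For the upper bound, use (3) to pick $K$ with $b_i<a^\star$ for all $i>K$; then for $n>K$,
\[
\max_{i\in[n]}a_{n,i}\le\max\Big(\max_{i\le K}a_{n,i},\ \sup_{i>K}b_i\Big)\le\max\Big(\max_{i\le K}a_{n,i},\ a^\star\Big),
\]
and since only finitely many sequences enter the first term, $\limsup_n\max_{i\le K}a_{n,i}=\max_{i\le K}a_i\le a^\star$. Combining the two bounds gives $\max_{i\in[n]}a_{n,i}\to a^\star=\max_{i\ge1}a_i$.

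\textbf{Step 3: stabilisation of the argmax.} Using (3), pick $K\ge I_0$ with $b_i<c$ for all $i>K$, so that $a_{n,i}\le b_i<c$ for every $n$ and every $i>K$, killing all far-away competitors at once. For each of the finitely many indices $i\le K$ with $i\ne I_0$, hypothesis (1) gives $a_{n,i}\to a_i\le s<c$, hence an $N_i$ with $a_{n,i}<c$ for $n\ge N_i$; similarly $a_{n,I_0}\to a^\star>c$ yields $N_{I_0}$ with $a_{n,I_0}>c$ for $n\ge N_{I_0}$. Set $N_0:=\max\{K,N_{I_0},\max_{1\le i\le K,\,i\ne I_0}N_i\}$. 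Then for all $n\ge N_0$ and all $i\in[n]\setminus\{I_0\}$ we have $a_{n,i}<c<a_{n,I_0}$, so $\max_{i\in[n]}a_{n,i}=a_{n,I_0}$, which is the second claim.

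\textbf{Main obstacle.} There is no real analytic difficulty once the tail estimate reduces matters to finitely many indices; the delicate point is purely the bookkeeping of Step 1, namely verifying that $a^\star>0$ and that the gap $a^\star-s$ is strictly positive — both of which rely essentially on the separation hypothesis (4) together with non-negativity — since it is exactly this gap that allows the single index $I_0$ to dominate every other index simultaneously for all large $n$.
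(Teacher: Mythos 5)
Your argument is correct and complete: the positivity of $a^\star$ and the strict gap $a^\star-s$ follow exactly as you say from (4) together with $a_i\le b_i\to 0$, and the truncation at a finite $K$ reduces both claims to finitely many convergent sequences, with the gap $c\in(s,a^\star)$ giving the stabilisation of the argmax. Note that the paper does not prove this statement at all — it is quoted verbatim from the cited reference \cite[Proposition 3.1]{Athr08} — so there is no in-paper proof to compare against; your proof is the natural elementary one (and essentially the argument used in that reference), and it correctly isolates where hypothesis (4) is genuinely needed, namely only for the eventual uniqueness of the maximising index, not for the convergence of the maximum itself.
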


	We now prove Proposition \ref{lemma:condmeanconv}:

	\begin{proof}[Proof of Proposition \ref{lemma:condmeanconv}]
	The focus of the proof is on  the PAFUD model. The proof for the PAFRO model follows by setting $m=1$, the proof for the PAFFD model follows in the same way, as we only look at the mean of $M^1_n(i)$, which by Lemma \ref{lemma:martingale} is a martingale for both the PAFUD \emph{and} PAFFD model. 
	
	We start by proving \eqref{eq:condmeanconv}. Take $\alpha\in(2,1+\theta_m)$. Using Lemma \ref{lemma:martingale}, it directly follows that 
	\be\label{eq:condmean} 
	\Ef{}{\zni}=(c_n^1(m))^{-1}\Ef{}{M^1_n(i)}-\F_i=\frac{c^1_{i\vee n_0}(m)}{c^1_n(m)}\Zm_{i\vee n_0}(i)+\F_i\Big(\frac{c_{i\vee n_0}^1(m)}{c_n^1(m)}-1\Big).
	\ee 
	Note that for $i\geq n_0$ the first term on the right-hand-side equals zero. We can then construct the inequalities
	\be \label{eq:maxinequalities}
	\max_{\inn}\frac{\F_i}{u_n} \Big(\frac{c^1_{i\vee n_0}}{c^1_n}-1\Big)\leq\max_{i\in[n]}\Ef{}{\zni/u_n}\leq\max_{\inn}\frac{\F_i}{u_n} \Big(\frac{c^1_{i\vee n_0}}{c^1_n}-1\Big)+\frac{m_0}{u_n c^1_n}.
	\ee 
	By Lemma \ref{lemma:ckn}, the last term on the right-hand-side tends to zero almost surely, as $\alpha-1<\theta_m$. By the reverse triangle inequality, it follows that for $x,y\in \R_+^n$, 
	\be \label{eq:maxabsdif}
	|\max_{\inn}x_i-\max_{\inn}y_i|=|\|x\|_\infty-\|y\|_\infty|\leq \|x-y\|_\infty=\max_{\inn}|x_i-y_i|.
	\ee 
	So, as 
	$c^1_{i\vee n_0}=c^1_i$ for all $i\geq n_0$,
	\be 
	\Big|\max_{\inn}\frac{\F_i}{u_n} \Big(\frac{c^1_{i\vee n_0}}{c^1_n}-1\Big)-\max_{\inn}\frac{\F_i}{u_n} \Big(\frac{c^1_i}{c^1_n}-1\Big)\Big|\leq \max_{\inn}\frac{\F_i}{u_n}\frac{c^1_i-c^1_{i\vee n_0}}{c^1_n}=\max_{i<n_0}\frac{\F_i}{u_n}\frac{c^1_i-c^1_{n_0}}{c^1_n},
	\ee 
	which again tends to zero almost surely by Lemma \ref{lemma:ckn}, as it is a maximum over a finite number of terms. Therefore, assuming the limits exist, it follows that
	\be \label{eq:maxlim}
	\lim_{n\rightarrow\infty}\max_{i\in[n]}\Ef{}{\zni/u_n}=\lim_{n\rightarrow\infty}\max_{\inn}\frac{\F_i}{u_n} \Big(\frac{c^1_i}{c^1_n}-1\Big)
	\ee 
	almost surely. We now show that
	\be \label{eq:maxc1norder}
	\Big|\max_{\inn}\frac{\F_i}{u_n} \Big(\frac{c^1_i}{c^1_n}-1\Big)-\max_{\inn}\frac{\F_i}{u_n}\Big(\Big(\frac{i}{n}\Big)^{-1/\theta_m}-1\Big)\Big| \toinp 0.
	\ee 
	Using \eqref{eq:maxabsdif} we find
	\be
	\bigg|\max_{\inn}\frac{\F_i}{u_n}\Big(\frac{c^1_i}{c^1_{n}}-1\Big)-\max_{\inn}\frac{\F_i}{u_n}\Big(\Big(\frac{i}{n}\Big)^{-1/\theta_m}-1\Big)\bigg|\leq \max_{\inn}\frac{\F_i}{u_n}\Big(\frac{n}{i}\Big)^{1/\theta_m}\bigg|\frac{c^1_i}{c^1_n}\Big(\frac{i}{n}\Big)^{-1/\theta_m}-1\bigg|.
	\ee 
	Then, let $\eta\in(1,(\alpha-2)\wedge 2)$ and let $(\eps_n)_{n\in\N}$ be a sequence such that $\eps_n:=n^{-\beta}$, with $\beta\in(0,\theta_m\eta/(1+(1+\theta_m)\eta))$. We split the maximum into two parts: indices $i$ which are at most $\eps_n n$ and at least $\eps_n n$ and deal with these separately. (Note that $\beta<1$ and thus $\eps_n n\rightarrow\infty$.) We first define, for $A\subseteq[n]$ and $\delta>0$, 
	\be 
	E_{A}:=\Big\{\max_{i\in A}\frac{\F_i}{u_n}\Big(\frac{n}{i}\Big)^{1/\theta_m}\bigg|\frac{c^1_i}{c^1_n}\Big(\frac{i}{n}\Big)^{1/\theta_m}-1\bigg|>\delta\Big\}.
	\ee
	This yields 
	\be \ba\label{eq:epssplit}
	\P{E_{[n]}}\leq \P{E_{[\eps_n n]}}+\P{E_{[n]\backslash[\eps_nn]}}.
	\ea\ee  
	We first investigate the latter probability.  We write,
	\be \ba\label{eq:probmaxmax}
	\P{E_{[n]\backslash [\eps_nn]}}\leq \P{\Big(\max_{i>\eps_n n}\frac{\F_i}{u_n}\Big)\eps_n^{-1/\theta_m}\max_{i> \eps_n n}\bigg|\frac{c^1_i}{c^1_n}\Big(\frac{i}{n}\Big)^{1/\theta_m}-1\bigg|>\delta},
	\ea \ee 
	where we bound the $(n/i)^{1/\theta_m}$ term from above by $\eps_n^{-1/\theta_m}$ and take the maximum over the fitness variables and the absolute value separately. It is clear that the first maximum on the right-hand-side converges in distribution, as the number of terms in the maximum is of order $n$, and so the scaling is of the correct order. When $i\geq \eps_n n$, the indices $i$ tend to infinity with $n$, which indicates that the terms in the absolute value should be small by Lemma \ref{lemma:ckn}. We show that the second maximum tends to zero almost surely even when multiplied with $\eps_n^{-1/\theta_m}$. In order to prove this, we use the bounds in \eqref{eq:c1nupperbound}. The upper bound, when $i\geq \eps_n n$, is largest for $i=\eps_n n$. Thus, we have a uniform upper bound for all $\eps_n n\leq i\leq n$, 
	\be 
	\frac{c^1_i}{c^1_n}\Big(\frac{i}{n}\Big)^{1/\theta_m}\leq \exp\bigg\{\frac{k}{\theta_m}\log\Big(\eps_n\frac{n-(n_0+1)}{\eps_nn-(n_0+1)}\Big)+\frac{mk}{\E{\F}}\sum_{j=\eps_nn}^\infty \frac{|S_j/j-\E{\F}|}{m_0+m(j-n_0)+S_j}\bigg\}.
	\ee 
	For $n$ large, the denominator in the sum can be bounded from below by $mj/2$ and the term in the logarithm can be bounded from above by $1+2(n_0+1)/(\eps_nn)$. Hence, we obtain the upper bound 
	\be 
	\frac{c^1_i}{c^1_n}\Big(\frac{i}{n}\Big)^{1/\theta_m}\leq \exp\bigg\{\frac{k}{\theta_m}\log\Big(1+\frac{2(n_0+1)}{\eps_nn}\Big)+\frac{2k}{\E{\F}}\sum_{j=\eps_nn}^\infty \frac{|S_j/j-\E{\F}|}{j}\bigg\}.
	\ee 
	Similarly, the lower bound in \eqref{eq:c1nlower} is largest when $i=n-1$ (note that the second maximum in \eqref{eq:probmaxmax} is never attained at $i=n$, so we can ignore this case), from which we obtain
	\be 
	\max_{i\geq \eps_n n}\Big(\frac{c^1_i}{c^1_n}\Big(\frac{i}{n}\Big)^{1/\theta_m}-1\Big)\geq-\frac{m_0+\E{\F}n_0+(m-1)}{\theta_m^2}\frac{\pi^2}{6(n-1)}-\frac{1}{\theta_m(n-(n_0+2))}\geq-\frac{C}{n},
	\ee  
	for some constant $C>0$. It then follows that, as $\eps_n^{-1/\theta_m}=n^{\beta/\theta_m}\geq 1$, $a(\mathrm{e}^x-1)\leq \e^{ax}-1$ for all $x\in\R$ when $a\geq 1$,
	\be \ba\label{eq:maxbound}
	\eps_n^{-1/\theta_m}\max_{i\geq \eps_n n}\bigg|\frac{c^1_i}{c^1_n}\Big(\frac{i}{n}\Big)^{1/\theta_m}-1\bigg| \leq \max\bigg\{&\frac{C}{n^{1-\beta/\theta_m}},\exp\bigg\{\frac{k}{\theta_m}\log\Big(\Big(1+\frac{2(n_0+1)}{n^{1-\beta}}\Big)^{n^{\beta/\theta_m}}\Big)\\
	&+\frac{2k}{\E{\F}}n^{\beta/\theta_m}\sum_{j=\eps_nn}^\infty \frac{|S_j/j-\E{\F}|}{j}\bigg\}-1\bigg\}.
	\ea \ee 
	Clearly, the first argument tends to zero, as $\beta < \theta_m$. What remains to prove is that the second argument of the maximum on the right-hand-side of \eqref{eq:maxbound} converges to zero in probability. The first term in the exponent tends to zero, as $1-\beta>\beta/\theta_m$ by the choice of $\beta$. For the second term, using Markov's inequality, for any $\delta>0$,
	\be\ba 
	\mathbb{P}\bigg(n^{\beta/\theta_m}\sum_{j=\eps_n n}^{\infty}\frac{|S_j/j-\E{\F}|}{j}>\delta\bigg)&\leq \delta^{-1}n^{\beta/\theta_m}\sum_{j=\eps_nn}^\infty j^{-2}\E{|S_j-j\E{\F}|}\\
	&\leq \delta^{-1}n^{\beta/\theta_m}\sum_{j=\eps_nn}^\infty j^{-2}\E{|S_j-j\E{\F}|^{1+\eta}}^{1/(1+\eta)},\\
	\ea\ee 
	where we note that $\eta\in(0,(\alpha-2)\wedge 2)$, such that we can apply the Marcinkiewicz-Zygmund inequality as in \eqref{eq:marcin}. This yields, for some constant $C>0$, the upper bound
	\be
	C n^{\beta/\theta_m}\sum_{j=\eps_n n}^\infty j^{-2+1/(1+\eta)}\leq \wt C n^{\beta(\eta/(1+\eta)+1/\theta_m)-\eta/(1+\eta)},
	\ee 
	which tends to zero by the choice of $\beta$. It now follows that the right-hand-side of \eqref{eq:maxbound} tends to zero in probability. This implies, using Slutsky's theorem \cite{Vaart00}, that for any $\delta>0$, 
	\be\label{eq:maxgeqepsn}
	\lim_{n\rightarrow \infty}\P{E_{[n]\backslash[\eps_nn]}}=0
	\ee 
	For the first probability on the right-hand-side of \eqref{eq:epssplit}, we show that $\max_{i\leq \eps_n n}(\F_i/u_n)(n/i)^{1/\theta_m}$ tends to zero in probability when $n$ tends to infinity and that $\max_{i\leq \eps_n n}|(c^1_i/c^1_n)(i/n)^{1/\theta_m}-1|$ converges almost surely. We focus on the former first. The claim is proved by using the Poisson Point Process (PPP) weak limit. Recall $\Pi_n$ in \eqref{eq:pin} and its weak limit $\Pi$. We write 
	\be\label{eq:pppweaklimit}
	\Pi_n=\sum_{i=1}^n \delta_{(i/n,\F_i/u_n)}\Rightarrow \sum_{i\geq 1} \delta_{(t_i,f_i)}=:\Pi\quad \text{in }M_p(E),
	\ee
	where $\delta$ is a Dirac measure, and $\Pi$ is a PPP on $(0,1)\times(0,\infty)$ with intensity measure $\nu(\d t,\d x):=\d t \times (\alpha-1)x^{-\alpha}\d x$ \cite[Corollary 4.19]{Res13}. We now define $\Pi'$ to be the PPP on $\R_+$ obtained from mapping points $(t,f)\in\Pi$ to $ft^{-1/\theta_m}$ and let $\Pi_\eps'$ be the restriction of $\Pi'$ to points $(t,f)$ such that $t\leq \eps$. More formally, 
	\be
	\Pi':=\sum_{(t,f)\in \Pi}\delta_{(ft^{-1/\theta_m})},\qquad \Pi_\eps':=\sum_{(t,f)\in\Pi}\ind_{\{t\leq \eps\}}\delta_{(ft^{-1/\theta_m})}.
	\ee  
	Now, we fix an arbitrary $\delta,\eta>0$. Then, we can find an $\eps>0$ sufficiently small, such that 
	\be \ba\label{eq:lawpiprime}
	\P{\max_{(t,f)\in\Pi:t\leq \eps}f t^{-1/\theta_m}>\delta}&=1-\P{\Pi'_\eps((\delta,\infty))=0}\\
	&=1-\exp\Big\{\int_0^\eps\int_{\delta t^{1/\theta_m}}^\infty (\alpha-1)f^{-\alpha} \d f \d t\Big\}\\
	&=1-\exp\Big\{-\frac{\theta_m}{\theta_m-(\alpha-1)}\delta^{-(\alpha-1)}\eps^{(\theta_m-(\alpha-1))/\theta_m}\Big\}<\eta/2
	\ea\ee 
	is satisfied. Due to \eqref{eq:pppweaklimit} and the continuous mapping theorem, any continuous functional $T$ of $\Pi_n$ converges in distribution to $T(\Pi)$. We use this to compare the law of $\max_{i\leq \eps n}(\F_i/u_n)(i/n)^{-1/\theta_m}$ and $\max_{(t,f)\in\Pi:t\leq \eps}f t^{-1/\theta_m}$ by defining, for $\eps\in(0,1]$, the functional $T_\eps$, such that $T_\eps(\Pi):=\max_{(t,f)\in\Pi:t\leq \eps}ft^{-1/\theta_m}$. Let $M_k:=\{\Pi\in M_p(E)\;|\;T_\eps(\Pi)<k\},k\in\N $. Then, on $M_k$, $T_\eps$ is continuous, and thus $T_\eps$ is continuous on $\cup_{k\in\N}M_k$. Since the point processes $\Pi$ with intensity measure $\nu$ as described above are such that $T_\eps(\Pi)$ is finite almost surely, as follows from \eqref{eq:lawpiprime}, $\Pi \in M_k$ for some $k\in \N$ and thus $T_\eps$ is continuous with respect to $\Pi$ almost surely for any $\eps\in(0,1]$. It follows that, for $\delta,\eta$ fixed, $\eps$ chosen such that \eqref{eq:lawpiprime} holds and $n$ sufficiently large,
	\be
	\P{\max_{i\in [ \eps n]}\frac{\F_i}{u_n}(i/n)^{-1/\theta_m}>\delta}\leq\P{\max_{(t,f)\in\Pi:t\leq \eps}f t^{-1/\theta_m}>\delta}+\eta/2<\eta.
	\ee 
	As $\eps_n$ decreases monotonically, $ \eps_n<\eps$ for $n$ sufficiently large. Hence, it follows that for $n$ large,
	\be \label{eq:convprobmaxf}
	\P{\max_{i\in [ \eps_n n]}\frac{\F_i}{u_n}(i/n)^{-1/\theta_m}>\delta}\leq \P{\max_{i\leq \eps n}\frac{\F_i}{u_n}(i/n)^{-1/\theta_m}>\delta}<\eta.
	\ee 
	We therefore can conclude that $\max_{i\in [\eps_n n]}(\F_i/u_n)(i/n)^{1/\theta_m}\toinp 0$ as $n\rightarrow \infty$, as $\eta$ is arbitrary. We now show that $\max_{i\leq \eps_n n}|(c^1_i/c^1_n)(i/n)^{1/\theta_m}-1|$ converges almost surely. Because of Lemma \ref{lemma:ckn}, for each fixed $i\in\N$, $|(c^1_i/c^1_n)(i/n)^{1/\theta_m}-1|$ converges almost surely to some limit random variable $A_i$, and $A_i\neq A_j$ almost surely for all $i\neq j$. Using the lower and upper bound in \eqref{eq:c1nupperbound}, we obtain for every $i\geq n_0+1$ fixed and $n\geq i$,
	\be \ba
	\sup_{n\geq i}\bigg|\frac{c^1_i}{c^1_n}\Big(\frac{i}{n}\Big)^{1/\theta_m}-1\bigg|\leq{}& \max\bigg\{&&\hspace{-10pt}\frac{mk}{\E{\F}}\sum_{j=i}^{\infty}  \frac{|S_j/j-\E{\F}|}{m_0+m(j-n_0)+S_j}+\frac{m}{2}\sum_{j=i}^{\infty}\Big(\frac{k}{S_j}\Big)^2\\
	& &&\hspace{-10pt}+\frac{m_0+\E{\F}n_0+(m-1)}{\theta_m^2}\frac{\pi^2}{6i}+\frac{1}{\theta_m((i-(n_0+1))\vee 1)},\\
	&\exp\bigg\{&&\hspace{-13pt}\frac{k}{\theta_m}\log\Big(\frac{i}{(i-(n_0+1))\vee 1}\Big)+\frac{mk}{\E{\F}}\sum_{j=i}^\infty \frac{|S_j/j-\E{\F}|}{m_0+j-n_0+S_j}\bigg\}-1 \bigg\}\\
	=:{}&B_i.
	\ea \ee 
	As the sums in the maximum are almost surely finite for all $i\in\N$, as follows from the proof of Lemma \ref{lemma:ckn} and the strong law of large numbers, $\lim_{i\rightarrow\infty}B_i=0$ almost surely. Thus, combining the above steps with Lemma \ref{lemma:sequences}, we conclude that as $n\rightarrow\infty$,
	\be 
	\max_{\inn}\bigg|\frac{c^1_i}{c^1_n}\Big(\frac{i}{n}\Big)^{1/\theta_m}-1\bigg|\toas\sup_{i\geq 1}A_i,
	\ee 
	and there exist almost surely finite random variables $I,N$, such that the maximum is almost surely
	attained at index $i=I$ for all $n\geq N$. It thus follows that the maximum converges almost surely to an almost surely finite limit $A_I$. We can now conclude that, as $\eps_n n\rightarrow \infty$,
	\be 
	\max_{i\leq \eps_n n}\bigg|\frac{c^1_i}{c^1_n}\Big(\frac{i}{n}\Big)^{1/\theta_m}-1\bigg|\toas\sup_{i\geq 1}A_i=A_I,
	\ee 
	which, together with \eqref{eq:convprobmaxf}, yields 
	\be 
	\max_{i\leq \eps_n n}\frac{\F_i}{u_n}\Big(\frac{i}{n}\Big)^{-1/\theta_m}\max_{i\leq \eps_n n}\bigg|\frac{c^1_i}{c^1_n}\Big(\frac{i}{n}\Big)^{1/\theta_m}-1\bigg|\toinp 0.
	\ee 
	Combining this with \eqref{eq:epssplit} and \eqref{eq:maxgeqepsn}, we obtain \eqref{eq:maxc1norder}. By a similar argument as before, we find, 
	\be \label{eq:pppmaplimit}
	\max_{\inn}\frac{\F_i}{u_n}\Big(\Big(\frac{i}{n}\Big)^{-1/\theta_m}-1\Big)\toindis \max_{(t,f)\in\Pi}f(t^{-1/\theta_m}-1).
	\ee 
	Thus, combining \eqref{eq:maxlim}, \eqref{eq:maxc1norder} and \eqref{eq:pppmaplimit} and applying Slutsky's theorem \cite{Vaart00}, we arrive at the desired result. 
	
	We now prove \eqref{eq:infcondmeanconv} and so we let $\alpha\in(1,2)$. An important result is stated in Proposition \ref{prop:infmeanmaxconv}. By the construction of $\Pi_n$ in \eqref{eq:pin} and the definition of $T^{\eps}$ in \eqref{eq:Top}, it follows that
	\be 
	\frac{\F_i}{u_n}T^{i/n}(\Pi_n)=	\frac{\F_i}{u_n}\int_{i/n}^1 \Big(\int_E f\ind_{\{t\leq s\}}\d \Pi_n(t,f)\Big)^{-1}\d s=	\frac{\F_i}{u_n}\frac{1}{n}\sum_{j=i}^n\frac{u_n}{S_j}=	\frac{\F_i}{n}\sum_{j=i}^n\frac{1}{S_j},
	\ee 
	as for $s\in[j/n,(j+1)/n)$ the integrand is constant. Hence, by Proposition \ref{prop:infmeanmaxconv}, what remains is to prove that
	\be \label{eq:infmaxmean}
	\bigg|\max_{\inn}\Ef{}{\Zm_n(i)/n}-\max_{\inn}\frac{\F_i}{n}\sum_{j=i}^n m/S_j\bigg|\toinp 0.
	\ee 
	Recall the result in \eqref{eq:maxlim} regarding the limit of the maximum conditional mean. The above is therefore implied by the following two statements:
	\be \ba\label{eq:c1nmaxconv}
	\bigg|\max_{\inn}\frac{\F_i}{n}\Big(\frac{c^1_i}{c^1_n}-1\Big)-\max_{\inn}\frac{\F_i}{n}\sum_{j=i}^n m/(m_0+m(j-n_0)+S_j)\bigg|\toinp 0,\\
	\bigg|\max_{\inn}\frac{\F_i}{n}\sum_{j=i}^n m/(m_0+m(j-n_0)+S_j)-\max_{\inn}\frac{\F_i}{n}\sum_{j=i}^n m/S_j\bigg|\toinp 0.
	\ea\ee 
	We start by proving the first line of \eqref{eq:c1nmaxconv}. Let us write $Z_j:=m_0+m(j-n_0)+S_j$. By \eqref{eq:maxabsdif}, it follows that 
	\be
	\bigg|\max_{\inn}\frac{\F_i}{n}(c^1_i/c^1_n-1)-\max_{\inn}\frac{\F_i}{n}\sum_{j=i}^n m/Z_j\bigg|\leq \max_{\inn}\frac{\F_i}{n}\bigg((c^1_i/c^1_n-1)-\sum_{j=i}^n m/Z_j\bigg),
	\ee 
	as the terms within the brackets on the right-hand-side are a.s.\ positive. Then, we further bound the expression on the right-hand-side from above by splitting the maximum into two parts, as
	\be \ba\label{eq:splitmax}
	\max_{\inn}\frac{\F_i}{n}\bigg((c^1_i/c^1_n-1)-\sum_{j=i}^n m/Z_j\bigg)\leq{}& \max_{i\in [ i_n]}\frac{\F_i}{n}\bigg((c^1_i/c^1_n-1)-\sum_{j=i}^n m/Z_j\bigg)\\
	&+\max_{i_n\leq i\leq n}\frac{\F_i}{n}\bigg((c^1_i/c^1_n-1)-\sum_{j=i}^n m/Z_j\bigg),
	\ea \ee  
	where $i_n$ is strictly increasing and tends to infinity with $n$. We first investigate the second maximum, by bounding the terms within the brackets. Namely,
	\be \ba
	(c^1_i/c^1_n-1)-\sum_{j=i}^n m/Z_j\leq \exp\bigg\{\sum_{j=i}^n m/Z_j \bigg\}-1-\sum_{j=i}^n m/Z_j=\sum_{k=2}^\infty \bigg(\sum_{j=i}^n m/Z_j\bigg)^k.
	\ea \ee 
	Now, fix $\eps>0$. By \eqref{eq:Miprobbound} there exists an almost surely finite random variable $J$ such that for all $j\geq J$, $M_j\geq j^{1/(\alpha-1)-\eps}$, with $M_j=\max_{k\leq j}\F_k$. So, on $\{i\geq J\}$, $Z_j\geq j^{1/(\alpha-1)-\eps}$ for all $j\geq i$. This yields the upper bound
	\be\label{eq:upbounddiffc1n}
	\sum_{k=2}^\infty m i^{-k((2-\alpha)/(\alpha-1)-\eps)} \leq C i^{-2((2-\alpha)/(\alpha-1)-\eps)},
	\ee 
	for some constant $C>0$, as we can bound an exponentially decaying sum by a constant times its first term. It follows, on $i_n\geq J$, which holds with high probability, and by \eqref{eq:upbounddiffc1n}, that
	\be \label{eq:boundatin}
	\max_{i_n\leq i\leq n}\frac{\F_i}{n}\bigg((c^1_i/c^1_n-1)-\sum_{j=i}^n m/Z_j\bigg)\leq C i_n^{-2((2-\alpha)/(\alpha-1)-\eps)} \frac{u_n}{n} \max_{i_n\leq i\leq n}\frac{\F_i}{u_n},
	\ee 
	which tends to zero in probability when $i_n^{-2((2-\alpha)/(\alpha-1)-\eps)} u_n/n=o(1)$, that is, when $i_n=n^\rho$, with $\rho\in(1/2,1)$. On the other hand, when considering the first maximum in \eqref{eq:splitmax}, we find
	\be \label{eq:boundat1}
	\max_{i\in[i_n]}\frac{\F_i}{n}\bigg((c^1_i/c^1_n-1)-\sum_{j=i}^n m/Z_j\bigg)\leq (1/c^1_n)\frac{u_{i_n}}{n}\max_{i\leq i_n}\frac{\F_i}{u_{i_n}},
	\ee 
	where we bound the terms inside the brackets on the left-hand-side by omitting all negative terms and by noting that $c^1_i\leq 1$ for all $i$. The right-hand-side of \eqref{eq:boundat1} converges to zero in probability when $u_{i_n}/n=o(1)$, that is, when $i_n=n^\rho$ with $\rho<\alpha-1$, since $c^1_n$ converges almost surely for $\alpha\in(1,2)$ by Lemma \ref{lemma:ckn}. We conclude that for $\alpha\in(3/2,2)$ we can find a $\rho\in(1/2,\alpha-1)$ such that both maxima tend to zero in probability. When $\alpha\in(1,3/2]$, however, such a $\rho$ cannot be found and more work is required to prove the desired result. In this case, we split the maximum into $K=K(\alpha)<\infty$ maxima, as follows: Let $A_{i,n}:=\F_i/n, B_{i,n}:=(c^1_i/c^1_n-1)-\sum_{j=i}^n m/Z_j$. Then, we define $i^k_n:=n^{\rho_k}$, $k=0,1,\ldots,K$, with $\rho_0=0,\rho_K=1$, and 
	\be
	\rho_k:=\frac{\alpha-1}{2}\frac{c^k-1}{c-1},\qquad k\in\{1,2,\ldots,K-1\},
	\ee
	where $c:=2(2-\alpha)-2\eps(\alpha-1)\neq 1$. Note that $\rho_k$ is strictly increasing in $k$, independent of $c<1$ or $c>1$. We now write
	\be\label{eq:summaxK}
	\max_{\inn}A_{i,n}B_{i,n}\leq \sum_{k=0}^{K-1} \max_{i^k_n\leq i\leq i^{k+1}_n} A_{i,n}\max_{i^k_n\leq i\leq i^{k+1}_n}B_{i,n}.
	\ee
	We first deal with the $k=0$ term. As in \eqref{eq:boundat1}, since $\rho_1<\alpha-1$, $\max_{i^0_n\leq i\leq i^{1}_n} A_{i,n}\max_{i^0_n\leq i\leq i^1_n}B_{i,n}$ tends to zero in probability. For $k=1,\ldots,K-2$, following the same steps that lead to the bound in \eqref{eq:boundatin}, we obtain
	\be 
	\max_{i^k_n\leq i \leq i^{k+1}_n}A_{i,n}\max_{i^k_n\leq i \leq i^{k+1}_n}B_{i,n}\leq C_k (i^k_n)^{-2((2-\alpha)/(\alpha-1)-\eps)}\frac{u_{i^{k+1}_n}}{n}\max_{i^k_n\leq i\leq i^{k+1}_n}\frac{\F_i}{u_{i^{k+1}_n}},
	\ee 
	for some constant $C_k>0$. This upper bound tends to zero in probability when
	\be\label{eq:rhobounds}
	\rho_{k+1}<\alpha-1+(2(2-\alpha)-2\eps(\alpha-1))\rho_{k}=(\alpha-1)+c\rho_{k}
	\ee 	
	is satisfied.  By the definition of $\rho_k$, this holds when
	\be 
	\frac{c^{k+1}-1}{c-1}-2<c\frac{c^k-1}{c-1} \ \Leftrightarrow -1+ \sum_{j=1}^k c^j < \sum_{j=1}^k c^j,
	\ee 
	which is indeed the case. Finally, for $k=K-1$, again using the similar bound as in \eqref{eq:boundatin}, we find that the final term of the sum in \eqref{eq:summaxK} converges to zero in probability when $\rho_{K-1}\in(1/2,1)$. What remains to show, is that for all $\alpha\in(1,3/2]$ there does exist a finite $K$ such that $\rho_{K-1}\in(1/2,1)$. We distinguish two cases: $\alpha=3/2$ and $\alpha\in(1,3/2)$. For the first case, $c<1$ for any choice of $\eps$. This implies that $\rho_k\to 1/(2\eps)$ as $k$ tends to infinity, so taking $\eps<1$ suffices. For $\alpha\in(1,3/2)$, we can choose $\eps$ sufficiently small, such that $c>1$, so that $\rho_k$ diverges. In both cases there therefore exists a $K$ such that $\rho_k>1/2$ for all $k\geq K-1$. Thus, in both cases, we can define $K:=\inf\{k\in\N\,|\, \rho_{k}>1/2\}+1$. The only issue left to address regarding $K$, is that it is possible that $\rho_{K-1}>1$. However, in that case we can simply choose $\rho_{K-1}=a$, for any $a\in(1/2,1)$, since $\rho_{K-2}\leq 1/2<a$ by the definition of $K$, and decreasing $\rho_{K-1}$ does not violate the constraint in \eqref{eq:rhobounds} for $k=K-2$. We hence obtain the first line in \eqref{eq:c1nmaxconv}.
	
	The proof for the second line in \eqref{eq:c1nmaxconv} follows similarly. First, by letting $i=i(n)$ tend to infinity with $n$, we bound, conditional on $\{i\geq J\}$,
	\be \ba\label{eq:invsumdiff}
	\bigg|\sum_{j=i}^n m/S_j - \sum_{j=i}^n m/Z_j \bigg|&\leq C\sum_{j=i}^n j/M_j^2 \leq C\sum_{j=i}^n j^{1-2/(\alpha-1)+\eps}\leq \wt C i^{-2((2-\alpha)/(\alpha-1)-\eps/2)},
	\ea \ee 
	for some constant $C\geq m+m_0$. We note that this bound is similar to the upper bound for $(c^1_i/c^1_n-1)-\sum_{j=i}^n 1/(j+S_j/m)$ in \eqref{eq:upbounddiffc1n}. Also, both sums on the left-hand-side of \eqref{eq:invsumdiff} converge almost surely, as $\alpha\in(1,2)$. Thus, a similar approach, with the same indices $i_n^0,\ldots,i_n^K$ can be used to obtain the desired result. Combining both statements in \eqref{eq:c1nmaxconv} and using the triangle inequality and the continuous mapping theorem proves \eqref{eq:infmaxmean}, which together with Proposition \ref{prop:infmeanmaxconv} finishes the proof.
	\end{proof}	

	We now prove Proposition \ref{lemma:concentration}:

	\begin{proof}[Proof of Proposition \ref{lemma:concentration}]
	
	The focus of the proof is on  the PAFUD model, for which we use the martingales $M^k_n(i)$. The proof for  the PAFRO model follows by setting $m=1$, and for the PAFFD model it follows in a similar fashion, where all upper bounds still hold when the supermartingale $\wt M^k_n(i)$ is to be used. We prove \eqref{eq:conc} first. Applying \eqref{eq:maxabsdif}, a $p^{\text{th}}$ moment bound for some $p>1$ to be determined later, using Markov's inequality and H\"older's inequality yields
	\be \ba \label{eq:maxconc}
	\mathbb{P}_\F(|\max_{\inn}\Zm_n(i)-\max_{\inn}\Ef{}{\Zm_n(i)}|>\eta u_n)&\leq \mathbb{P}_\F\Big(\max_{\inn}|\Zm_n(i)-\Ef{}{\Zm_n(i)}|>\eta u_n\Big)\\
	&\leq (\eta u_n)^{-p}\sum_{i=1}^n \Ef{\big}{|\zni-\Ef{}{\zni}|^p}\\
	&\leq (\eta u_n)^{-p}\sum_{i=1}^n \mathbb{E}_\F\big[|\zni-\Ef{}{\zni}|^{2k}\big]^{p/(2k)},
	\ea \ee 
	where $k>p/2$ is an integer. As $\zni-\Ef{}{\zni}=(\zni+\F_i)-\Ef{}{\zni+\F_i}$ and $2k$ is even, we find, using H\"older's and Jensen's inequality and setting $X_n(i):=\zni+\F_i$,
	\be \ba
	\Ef{\big}{|\zni-\Ef{}{\zni}|^{2k}}={}&\sum_{j=0}^{2k}{2k \choose j}\Ef{}{X_n(i)^{j}}(-1)^j \Ef{}{X_n(i)}^{2k-j}\\
	={}&\sum_{j=0}^k{2k \choose 2j}\Ef{}{X_n(i)^{2j}}\Ef{}{X_n(i)}^{2k-2j}\\&-\sum_{j=1}^k{2k\choose 2j-1}\Ef{}{X_n(i)^{2j-1}}\Ef{}{X_n(i)}^{2k-(2j-1)}\\
	\leq{}& \sum_{j=0}^k {2k\choose 2j}\Ef{}{X_n(i)^{2k}}-\sum_{j=1}^k {2k \choose 2j-1}\Ef{}{X_n(i)}^{2k}.
	\ea\ee 
	Using that 
	\be 
	\sum_{j=0}^{2k}{2k\choose j}=2^{2k},\qquad \sum_{j=0}^{2k}{2k \choose j}(-1)^j=0,
	\ee
	it follows that both sums in the last line of \eqref{eq:maxconc} equal $2^{2k-1}$. We can thus bound \eqref{eq:maxconc} from above by
	\be \label{eq:concub}
	\frac{2^{2k-1}}{(\eta u_n)^p}\sum_{i=1}^n (\Ef{\big}{(\zni+\F_i)^{2k}}-\Ef{}{\zni+\F_i}^{2k})^{p/(2k)}.
	\ee 
	We now aim to bound the $2k^{\text{th}}$ moment of $\zni+\F_i$. Since, for $x\geq 0,k\in\N$, $x^{2k}\leq \prod_{j=1}^{2k}(x-(j-1))={x+(2k-1)\choose 2k}(2k)!$, it follows from Lemma \ref{lemma:martingale} that
	\be 
	\mathbb{E}_\F\big[(\zni+\F_i)^{2k}\big]\leq(c^{2k}_n)^{-1}(2k)!\Ef{}{M_n^{2k}(i)}=\frac{c^{2k}_{i\vee n_0}}{c^{2k}_n}(2k)!{\Zm_{i\vee n_0}(i)+\F_i+2k-1\choose 2k}.
	\ee 
	We note that this inequality would still hold for the PAFFD model, when using the supermartingales $\wt M^k_n(i)$ and the sequences $\wt c^k_n(i)$. We thus obtain the upper bound
	\be \ba
	\Ef{\big}{(\zni+\F_i)^{2k}}\leq\frac{c^{2k}_{i\vee n_0}}{c^{2k}_n}(\Zm_{i\vee n_0}+\F_i)^{2k}+\frac{c^{2k}_{i\vee n_0}}{c^{2k}_n}P_{2k-1}(\Zm_{i\vee n_0}(i)+\F_i),
	\ea\ee  
	where $P_{2k-1}(x)=(2k)!{x+2k-1\choose 2k}-x^{2k}$ is a polynomial of degree $2k-1$. Using \eqref{eq:condmean}, we find
	\be \ba\label{eq:2kmomentdiff}
	\Ef{\big}{(\zni+\F_i)^{2k}}-\Ef{}{\zni+\F_i}^{2k}\leq{}&\Big(\frac{c^{2k}_{i\vee n_0}}{c^{2k}_n}-\Big(\frac{c^1_{i\vee n_0}}{c^1_n}\Big)^{2k}\Big)(\Zm_{i\vee n_0}(i)+\F_i)^{2k}\\
	&+\frac{c^{2k}_{i\vee n_0}}{c^{2k}_n}P_{2k-1}(\Zm_{i\vee n_0}(i)+\F_i).
	\ea\ee 
	Using the definition of $c^k_n$ in \eqref{eq:c1n} yields, for all $1\leq r\leq n$,
	\be\ba \label{eq:c1ndiffbound}
	\frac{c^{2k}_{r}}{c^{2k}_n}&=\prod_{j=r\vee n_0}^{n-1}\prod_{\ell=1}^m \Big(1+\frac{2k}{m_0+m(j-n_0)+(\ell-1)+S_j}\Big)\\
	&\leq\prod_{j=r\vee n_0}^{n-1}\prod_{\ell=1}^m\Big(1+\frac{1}{m_0+m(j-n_0)+(\ell-1)+S_j}\Big)^{2k}=\Big(\frac{c^1_{r}}{c^1_n}\Big)^{2k}. 
	\ea\ee 
	Therefore, using this in \eqref{eq:2kmomentdiff} we obtain an upper bound that contains powers of $\F_i$ of order at most $2k-1$. This is the essential step to proving concentration holds. Namely, in \eqref{eq:concub}, this upper bound yields an expression with powers of $\F_i$ of order at most  $p(1-1/2k)$, which is just slightly less than $p$. The aim is, for every value of $\alpha>2$, to find values $p,k$ such that the $p(1-1/2k)^{\text{th}}$ moment of $\F$ exists and such that the entire expression in \eqref{eq:concub} still tends to zero. 
	
	Let us write
	\be 
	P_{2k-1}(x)=\sum_{\ell=0}^{2k-1}C_\ell x^\ell,
	\ee
	for non-negative constants $C_\ell$. Combining \eqref{eq:2kmomentdiff} and \eqref{eq:c1ndiffbound} in \eqref{eq:concub}, bounding $\Zm_{i\vee n_0}(i)$ from above by $m_0$ and recalling that $p/(2k)<1$, results in the upper bound
	\be \label{eq:aink}
	\frac{2^{2k-1}}{(\eta u_n)^p}\sum_{i=1}^n \Big(\frac{c^{2k}_i}{c^{2k}_n}\Big)^{p/(2k)}\sum_{\ell=0}^{2k-1}\wt C_{\ell}^{p/(2k)}\F_i^{\ell p/2k},
	\ee
	where the $\wt C_{\ell}>0$ are constants. We focus on the term where $\ell=2k-1$, as this is the boundary case. All other cases follow analogously. For the first $n_0$ terms, we can bound $c_i^{2k}$ from above by $(i/n_0)^{-p/\theta_m}$. For $n_0+1\leq i\leq n$, we use \eqref{eq:c1nupperbound} to bound $c^{2k}_i/c^{2k}_n$ from above. This yields for all terms, for some constant $C>0$,
	\be \ba\label{eq:concfinalub}
	\frac{\wt C_{2k-1}^{p/(2k)}2^{2k-1}}{(\eta u_n)^p}&\bigg(\exp\bigg\{-C+\frac{mp}{\E{\F}}\sum_{j=n_0}^\infty \frac{|S_j/j-\E{\F}|}{j-n_0+S_j}\bigg\}\vee 1\bigg)\sum_{i=1}^n \Big(\frac{i}{nn_0}\Big)^{-p/\theta_m}\F_i^{p(1-1/2k)} \\
	&\leq C_{k,p,\theta_m} \exp\bigg\{\frac{mp}{\E{\F}}\sum_{j=n_0}^\infty \frac{|S_j/j-\E{\F}|}{j-n_0+S_j}\bigg\}\frac{n^{p/\theta_m}}{u_n^p}\sum_{i=1}^n i^{-p/\theta_m}\F_i^{p(1-1/2k)},
	\ea\ee 
	for some constant $C_{k,p,\theta_m}$. In the last line, the exponential term is almost surely finite, as follows from the proof of Lemma \ref{lemma:ckn}. We now show that the fraction multiplied by the sum converges to zero in mean when $p$ and $k$ are chosen in a specific way. That is, for $\alpha>2$, set $p:=(1+\eps)(\alpha-1)$, where $\eps\in(0,1/(\alpha+1))$ and set $k:=\lceil p/2\rceil$. First note that $2k>p$, which was required for the H\"older inequality used in \eqref{eq:maxconc}. We now show that the $p(1-1/(2k))^{\text{th}}$ moment of the fitness distribution exists. For this to hold, $\alpha-1>p(1-1/(2k))$ needs to be satisfied, or, equivalently, 
	\be
	k<\frac{p}{2(p-(\alpha-1))}=\frac{1+\eps}{2\eps},
	\ee 
	and, as $\eps\in(0,1/(\alpha+1))$,
	\be 
	\frac{1+\eps}{2\eps}-\frac{p}{2}=\frac{1+\eps}{2}(1/\eps-(\alpha-1))>1+\eps.
	\ee 
	It follows that, indeed,
	\be
	(1+\eps)/(2\eps)>p/2+1+\eps>\lceil p/2\rceil =k.
	\ee 
	Hence, taking the mean, we obtain
	\be 
	\frac{n^{p/\theta_m}}{u_n^p}\sum_{i=1}^n i^{-p/\theta_m}\E{\F_i^{p(1-1/(2k))}}\leq C\frac{n^{p/\theta_m}}{u_n^p}n^{(1-p/\theta_m)\vee 0},
	\ee 
	with $C>0$ a constant. This tends to zero with $n$, as $u_n=n^{1/(\alpha-1)}\wt \ell(n)$ for some slowly-varying function $\wt\ell(n)$, and both $p>\alpha-1$ and $\theta_m>\alpha-1$ hold. So, the last expression in \eqref{eq:concfinalub} consists of an almost surely finite random variable (the exponential term) and a term that converges to zero mean, which implies that the entire expression converges to zero in probability. The same argument holds also for all other values of $\ell$ in \eqref{eq:aink}. Thus, as $n$ tends to infinity,
	\be \label{eq:maxconccond}
	\mathbb{P}_\F\Big(|\max_{\inn}\Zm_n(i)-\max_{\inn}\Ef{}{\Zm_n(i)}|>\eta u_n\Big)\toinp 0.
	\ee 
	As this conditional probability measure is bounded from above by one, it follows from the dominated convergence theorem and \eqref{eq:maxconccond} that \eqref{eq:conc} holds. 
	
	We now prove \eqref{eq:infmeanconc}, so let $\alpha\in(1,2)$. A different approached is required, so we write, using \eqref{eq:maxabsdif}, a union bound and Chebyshev's inequality,
	\be \ba \label{eq:maxmartbound}
	\mathbb{P}_\F\Big(|\max_{\inn}\Zm_n(i)-\max_{\inn}\Ef{}{\Zm_n(i)}|>\eta u_n\Big)&\leq \mathbb{P}_\F\Big(\max_{\inn}|\Zm_n(i)-\Ef{}{\Zm_n(i)}|>\eta u_n\Big)\\
	&\leq \sum_{i=1}^n\Pf{|M_n^1(i)-\Ef{}{M_n^1(i)}|\geq \eta u_n c^1_n}\\
	&\leq(\eta u_n c^1_n)^{-2}\sum_{i=1}^n \Var_\F(M^1_n(i)).
	\ea\ee
	We now use the martingale property to split the variance in the variance of martingale increments. To this end, we need to introduce some notation. Recall that $\Zm_{n,j}(i)$ is the degree of $i$ in $\G_{n,j}$, the graph with $n$ vertices where the $n+1^{\mathrm{st}}$ vertex has connected $j$ half-edges with the first $n$ vertices. Now, let us write
	\be\ba 
	c^1_{n,j}(m)&:=\prod_{r=n_0}^{n-1}\prod_{\ell=1}^j \Big(1-\frac{1}{m_0+m(r-n_0)+(\ell-1)+1+S_r}\Big), \\ M^1_{n,j}(i)&:=c^1_{n-1,j}(m)(\Zm_{n-1,j}(i)+\F_i).
	\ea \ee 
	If we let $M_\ell:=M^1_{n,j}(i)$, where $n\geq n_0,j\in[m]$ are such that $mn+(j-1)=\ell$, it follows from the proof of Lemma \ref{lemma:martingale} that $M_\ell$ is a martingale for the PAFRO and PAFUD model. Hence, we can then write the conditional variance of $M^1_n(i)$ as in \eqref{eq:maxmartbound} as
	\be \label{eq:martincr}
	\Var_\F(M^1_n(i))=\sum_{k=i+1\vee n_0+1}^n \sum_{j=1}^m \Var_\F(\Delta M^1_{k,j}(i)),
	\ee 
	where $\Delta  M^1_{k,j}(i):= M^1_{k,j}(i)- M^1_{k,j-1}(i)$, and where we note that $M^1_{k,0}(i)=M^1_{k-1,m}(i)=M^1_k(i)$ for all $k=i\vee n_0,\ldots,n$. We then obtain
	\be \ba\label{eq:martvar}
	\Var_\F(\Delta M^1_{k,j}(i)) =(c^1_{k,j-1})^2\Ef{\Big}{\Big(\ind_{k,j,i}-\frac{\Zm_{k-1,j-1}(i)+\F_i+\ind_{k,j,i}}{m_0+m((k-1)-n_0)+(j-1)+1+S_{k-1}}\Big)^2},
	\ea\ee 
	where $\ind_{k,j,i}$ is the indicator of the event that vertex $k$ connects its $j^{\mathrm{th}}$ half-edge to vertex $i$. We rewrite this to find the upper bound
	\be \ba \label{eq:condvar}
	\Var_\F(\Delta M^1_{k,j}(i))&\leq \Ef{\Big}{\Big(\ind_{k,j,i}-\frac{\Zm_{k-1,j-1}(i)+\F_i}{m_0+m((k-1)-n_0)+(j-1)+S_{k-1}}\Big)^2}\\
	&=\Ef{\big}{\Var(\ind_{k,j,i}\,|\,\G_{k-1,j-1})}\\
	&\leq \Ef{\Big}{\frac{\Zm_{k-1,j-1}(i)+\F_i}{m_0+m((k-1)-n_0)+(j-1)+S_{k-1}}}.
	\ea\ee  
	Combining this with \eqref{eq:maxmartbound} and \eqref{eq:martincr} and switching summations yields
	\be
	\mathbb{P}_\F\Big(|\max_{\inn}\Zm_n(i)-\max_{\inn}\Ef{}{\Zm_n(i)}|>\eta u_n\Big)\leq (\eta u_n c^1_n)^{-2}mn,
	\ee 
	This final expression tends to zero almost surely, as $c^1_n$ converges almost surely when $\alpha\in(1,2)$, as follows from Lemma \ref{lemma:ckn}. For the PAFFD model, we can use similar steps. We construct $\wt M_\ell:=\wt M^1_{n,j}(i)$ as above, with $\wt M^1_{n,j}:=\wt c^1_{n-1,j}(m)(\Zm_{n-1,j}(i)+\F_i)$, and 
	\be 
	\wt c_{n,j}(m):=\prod_{r=n_0}^{n-1}\Big(1-\frac{1}{m_0+m(r-n_0)+S_r}\Big)^j.
	\ee
	It again follows from the proof of Lemma \ref{lemma:martingale} that $\wt M_\ell$ is a supermartingale, thus yielding \eqref{eq:martincr} for $\wt M_{n}(i)$. Then, all further steps can be applied for the PAFFD model as well, where the equality in \eqref{eq:martincr} becomes an upper bound and the denominator of the fractions in \eqref{eq:martvar} and \eqref{eq:condvar} changes to $m_0+m((k-1)-n_0)+S_{k-1}$.
	  
	For the PAFRO model, an adapted final step is required, as the conditional moments in \eqref{eq:condvar} do not sum to one (when summing over $i$ from $1$ to $k-1$). Rather, we set $m$ to $1$ and follow the same steps up to \eqref{eq:condvar}. Then, we obtain by switching the summations,
	\be 
	(\eta u_n c^1_n)^{-2}\sum_{i=1}^n \sum_{k=i+1\vee n_0+1}^n \Ef{}{\Var(\ind_{k,1,i}\;|\;\G_{k-1})}\leq (\eta u_n c^1_n)^{-2} \sum_{k=n_0+1}^n (k-1)\leq \frac{n^2}{2(\eta u_n c^1_n)^2}.
	\ee  
	Again, $c^1_n$ converges almost surely. Hence, the right-hand-side tends to zero almost surely, since $1-1/(\alpha-1)<0$. Thus, for all models the conditional probability of concentration tends to zero with $n$. 
	
	Finally, like the argument made above \eqref{eq:maxmartbound}, applying the dominated convergence theorem proves \eqref{eq:infmeanconc}, which concludes the proof.
	\end{proof}	

	\section{Proof of the maximum degree growth theorem}\label{sec:mainproof}
	In this section, we use the results from Section \ref{sec:infmean} and \ref{sec:ppp}, in particular Propositions \ref{lemma:condmeanconv} and \ref{lemma:concentration}, to prove Theorem \ref{Thrm:maxdegree}. 
	
	\begin{proof}[Proof of Theorem \ref{Thrm:maxdegree}]
		We start by proving $(i)$ and $(ii)$. This directly follows from Lemmas \ref{lemma:martingale} and \ref{lemma:ckn}. As discussed after Lemma \ref{lemma:martingale}, the martingales (resp.\ supermartingales) $M^k_n(i)$ (resp.\ $\wt M^k_n(i)$) converge almost surely to $\xi_i^k$ (resp.\ $\wt \xi_i^k$). Also, for the PAFFD model, $M^1_n(i)$ converges almost surely to $\xi^1_i$ as well. By these two lemmas, $c^1_n\zni=M^1_n(i)-c^1_n\F_i$ converges almost surely to $\xi_i^1$ for the PAFRO and PAFUD models, $\wt c^1_n \zni=\wt M^1_n(i)-\wt c^1_n\F_i$ converges almost surely to $\wt \xi^1_i$ for the PAFFD model and $c^1_n(m) n^{1/\theta_m}$ and $\wt c^1_n(m) n^{1/\theta_m}$ converge almost surely to $c_1,\wt c_1$, respectively, when $\E{\F^{1+\eps}}<\infty$ for some $\eps>0$. Hence, we can set $\xi_i:=(c_1)^{-1}\xi_i^1$ for the PAFRO (note $m=1$) and the PAFUD model, and $\xi_i:=(\wt c_1)^{-1}\wt \xi^1_i$ for the PAFFD model. Since $c_1$ and $\wt c_1$ are finite almost surely, it follows directly from Lemma \ref{lemma:noatom} that $\xi_i$ has no atom at zero for all $i\in\N$ for any of the three models.

		When $\alpha\in(1,2)$, we note that $c^1_n\toas \underline c_1$ without the need of rescaling and thus \eqref{eq:maxconv2} follows with $\Zm_{\infty}(i):=\xi_i^1/\underline c_1-\F_i$, as $\zni=M^1_n(i)/c^1_n-\F_i$, for the PAFRO and PAFUD models and $\Zm_{\infty}(i):=\wt\xi^1_i/\underline {\wt c}_1 -\F_i$ for the PAFFD model.
		
		We now prove $(iii)$. From the second inequality in \eqref{eq:cnkineq} we obtain $(c^1_n)^k \leq c^k_n$ when $k\geq 1$. Furthermore, from \cite[Theorem 1]{Jam13} it follows that $x^k\leq \Gamma(x+k)/\Gamma(x)$ for all $x>0,k\geq 1$. Hence, $(c^1_n\zni)^k \leq c^k_n(\zni+\F_i)^k\leq M^k_n(i)\Gamma(k+1)$ for $k\geq 1$. Recall $M$ from Lemma \ref{lemma:supmkn}. Clearly, $M>\theta_m$ when $\E{\F^{\theta_m+\eps}}<\infty$ for some $\eps>0$. So, if we let $k\in(\theta_m,M)$, Lemma \ref{lemma:supmkn} yields
		\be
		\lim_{i\to\infty}\sup_{n\geq i}c^1_n \zni =0 \text{ almost surely},
		\ee  
		as $M>\theta_m$ when $\E{\F^{\theta_m+\eps}}<\infty$ for some $\eps>0$.
		It then follows from Lemma \ref{lemma:sequences}, as $c^1_n\zni\toas \xi^1_i$ and $\xi^1_i\neq \xi_j^1$ almost surely for $i\neq j$,
		\be 
		\max_{\inn} n^{-1/\theta_m}\zni=(n^{1/\theta_m}c^1_n)^{-1}\max_{\inn}c^1_n\zni \toas (c_1)^{-1}\sup_{i\geq 1}\xi^1_i=\sup_{i\geq 1}\xi_i,\quad\text{and}\quad  I_n\toas I,
		\ee 
		for some almost surely finite random variable $I$. The same approach with $\wt M^k_n(i)$ holds for the PAFFD model. We now turn to the convergence of $\max_{\inn}\zni/u_n$ and $\max_{\inn}\zni/n$ as in $(iv)$ and $(v)$. This follows immediately by applying Slutsky's theorem to the results in Propositions \ref{lemma:condmeanconv} and \ref{lemma:concentration}. For the convergence of $I_n/n$ as in \eqref{eq:ppplimit}, \eqref{eq:lawI} and \eqref{eq:infmeanppplimit}, we let $0\leq a<b\leq 1$, and define, using $z(t,f):=f(t^{-1/\theta_m}-1)$, the random variables
		\be\ba
		Q_\ell(a)&:=\max_{(t,f)\in\Pi:0<t<a}z(t,f),\quad Q(a,b):=\max_{(t,f)\in\Pi:a<t<b}z(t,f),\quad Q_r(b):=\max_{(t,f)\in\Pi:b<t<1}z(t,f),
		\ea\ee
		and events
		\be \ba\label{eq:mevents}
		M_n(a,b)&:=\Big\{\max_{an<i<bn}\zni/u_n>(\max_{1\leq i\leq an}\zni/u_n \vee \max_{bn\leq i\leq n}\zni/u_n)\Big\},\\
		M(a,b)&:=\Big\{Q(a,b)>Q_\ell(a)\vee Q_r(b)\Big\}.
		\ea\ee
		We can then conclude, for $\alpha\in(2,1+\theta_m)$,
		\be \ba\label{eq:inppplimit}
		\lim_{n\to\infty}\P{I_n/n\in(a,b)}=\lim_{n\to\infty}\mathbb{P}(M_n(a,b))=\P{M(a,b)},
		\ea\ee 
		since it follows from the proof of Propositions \ref{lemma:condmeanconv} and \ref{lemma:concentration} that the vector $(\zni/u_n)_{i\in[n]}$ converges in distribution when $\alpha\in(2,1+\theta_m)$. Now, by the fact that $\Pi$ is a PPP with intensity measure $\nu(\d t\times \d x)=\d t\times (\alpha-1)x^{-\alpha}\d x$, we find
		\be \ba\label{eq:lawqab}
		\P{Q(a,b)\leq x}&=\exp\bigg\{-\int_a^b \int_{x(t^{-1/\theta_m}-1)^{-1}}^\infty (\alpha-1)s^{-\alpha}\d s\d t\bigg\}=\exp\{-g(a,b) x^{-(\alpha-1)}\},
		\ea \ee 
		where $g(a,b):=\int_a^b (t^{-1/\theta_m}-1)^{\alpha-1}\d t<\infty$ for all $0\leq a\leq b\leq 1$. Similarly, using the independence property of PPPs, 
		\be \label{eq:lawmaxqlqr}
		\P{Q_\ell(a)\vee Q_r(b)\leq x}=\exp\{-(g(0,a)+g(b,1))x^{-(\alpha-1)}\}.
		\ee  
		Combining \eqref{eq:lawqab} and \eqref{eq:lawmaxqlqr} in \eqref{eq:inppplimit} by conditioning on $Q_\ell(a)\vee Q_r(b)$, we obtain
		\be 
		\lim_{n\to\infty}\P{I_n/n\in (a,b)}=1-\int_0^\infty (\alpha-1)x^{-\alpha}(g(0,a)+g(b,1))\exp\{-g(0,1)x^{-(\alpha-1)}\}\d x =\frac{g(a,b)}{g(0,1)},
		\ee 
		which yields the required result. Via a similar approach, redefining $M_n(a,b)$ and $M(a,b)$ accordingly for $\alpha\in(1,2)$, we can show $I_n/n$ converges in distribution when $\alpha\in(1,2)$, though it is not possible to find a closed expression for the law of $I$. Finally, we address the joint convergence of $(I_n/n,\max_{\inn}\zni/u_n)$. We let $0<c<d<\infty$ and define the events
		\be\label{eq:eevents}
		E_n(a,b,c,d)=:\Big\{\max_{an<i<bn}\zni/u_n\in(c,d)\Big\},\quad E(a,b,c,d):=\Big\{Q(a,b)\in(c,d)\Big\}.
		\ee 
		We can then write, using these events and the events in \eqref{eq:mevents} and letting $A:=(a,b)\times(c,d)$,
		\be 
		\mathbb{P}\Big((I_n/n,\max_{\inn}\zni/u_n)\in A\Big)=\P{M_n(a,b)\cap E_n(a,b,c,d)},
		\ee
		which converges to $\P{M(a,b)\cap E(a,b,c,d)}$ as $n$ tends to infinity by the same argument as provided for the limit in \eqref{eq:inppplimit}. Again, by conditioning on $Q_\ell(a)\vee Q_r(b)$ and using \eqref{eq:lawmaxqlqr}, we find 
		\be \ba
		\mathbb{P}(M(a,&b)\cap E(a,b,c,d))\\
		={}&\mathbb{P}(E(a,b,c,d))\P{Q_\ell(a)\vee Q_r(b)\leq c}\\
		&+\int_c^d\P{E(a,b,x,d)}(\alpha-1)x^{-\alpha}(g(0,a)+g(b,1))\exp\{-(g(0,a)+g(b,1))x^{-(\alpha-1)}\}\d x.
		\ea\ee 
		Using \eqref{eq:lawqab}and \eqref{eq:lawmaxqlqr}, the first term on the right-hand-side equals 
		\be \ba 
		(\exp\{&-g(a,b)d^{-(\alpha-1)}\}-\exp\{-g(a,b)c^{-(\alpha-1)}\})\exp\{-(g(0,a)+g(b,1))c^{-(\alpha-1)}\}\\
		&=\exp\{-g(a,b)d^{-(\alpha-1)}-(g(0,a)+g(b,1))c^{-(\alpha-1)}\}-\exp\{-g(0,1)c^{-(\alpha-1)}\},
		\ea \ee 
		and the second term equals 
		\be\ba
		\exp\{&-g(a,b)d^{-(\alpha-1)}\}(\exp\{-(g(0,a)+g(b,1))d^{-(\alpha-1)}\}-\exp\{-(g(0,a)+g(b,1))c^{-(\alpha-1)}\})\\
		&-\int_c^d(\alpha-1)x^{-\alpha}(g(0,a)+g(b,1))\exp\{-g(0,1)x^{-(\alpha-1)}\}\d x\\
		=&{}\exp\{-g(0,1)d^{-(\alpha-1)}\}-\exp\{-g(a,b)d^{-(\alpha-1)}-(g(0,a)+g(b,1))c^{-(\alpha-1)}\}\\
		&-\Big(1-\frac{g(a,b)}{g(0,1)}\Big)(\exp\{-g(0,1)d^{-(\alpha-1)}\}-\exp\{-g(0,1)c^{-(\alpha-1)}\}),
		\ea\ee 
		which, when combined, yields as $n$ tends to infinity,
		\be \ba
		\mathbb{P}\Big((I_n/n,\max_{\inn}\zni/u_n)\in A\Big)&\to\frac{g(a,b)}{g(0,1)}(\exp\{-g(0,1)d^{-(\alpha-1)}\}-\exp\{-g(0,1)c^{-(\alpha-1)}\})\\
		&=\P{I\in(a,b)}\mathbb{P}\Big(\max_{(t,f)\in\Pi}f(t^{-1/\theta_m}-1)\in(c,d)\Big),
		\ea\ee 
		where the final step regarding the law of the maximum of the PPP, a Fr\'echet distribution with shape parameter $g(0,1)$, follows from a similar argument as in \eqref{eq:lawpiprime}. As before, redefining the events in \eqref{eq:mevents} and \eqref{eq:eevents} accordingly and using the same steps yields the joint convergence of
		$(I_n/n,\max_{\inn}\zni/n)$ when $\alpha\in(1,2)$, which concludes the proof of Theorem \ref{Thrm:maxdegree}.
	\end{proof}
 
	\bibliographystyle{abbrv}
\bibliography{transferbib}	
\end{document}